\theoremstyle{plain}
\newtheorem{theorem}{Theorem}[section]
\newtheorem{lemma}[theorem]{Lemma}
\newtheorem{proposition}[theorem]{Proposition}
\theoremstyle{definition}
\newtheorem{definition}{Definition}[section]
\theoremstyle{remark}
\newtheorem{remark}[theorem]{Remark}
\numberwithin{equation}{section}
\newcommand{\dd}{\;\mathrm{d}\xspace}
\DeclareMathOperator{\loc}{loc}
\providecommand{\um}{\ensuremath{\underline{M}}}
\providecommand{\uphi}{\ensuremath{\underline{\phi}}}
\providecommand{\upsi}{\ensuremath{\underline{\psi}}}
\providecommand{\uu}{\ensuremath{\underline{U}}}
\providecommand{\ou}{\ensuremath{\overline{U}}}
\providecommand{\uw}{\ensuremath{\underline{W}}}
\providecommand{\ow}{\ensuremath{\overline{W}}}
\providecommand{\mP}{\ensuremath{\mathcal{P}^{(\mu^\star)}}}
\providecommand{\e}{\ensuremath{\mathrm{e}}}
\begin{document}

\title[Critical mass for finite-time chemotactic collapse]{Critical mass for finite-time chemotactic collapse in the critical dimension via comparison}

\author[X.~Mao]{Xuan Mao}
\address[X.~Mao]{School of Mathematics\\ 
Hohai University\\ 
Nanjing 211100\\
Jiangsu, China}
\email[X.~Mao]{20250611@hhu.edu.cn}
\thanks{The first author has been supported by ``the Fundamental Research Funds for the Central Universities'' (No.~B250201215).}

\author[M.~Liu]{Meng Liu}
\address[M.~Liu]{Department of Applied Mathematics, Anhui University of Technology, Ma'anshan
243002, P. R. China}
\email[M.~Liu]{LMeng\_math@yeah.net}

\author[Y.~Li]{Yuxiang Li}
\address[Y.~Li]{School of Mathematics, Southeast University, Nanjing 211189, P. R. China}
\email[Y.~Li]{lieyx@seu.edu.cn}

\thanks{
The last author has been supported in part by National Natural Science Foundation of China (No. 12271092, No. 11671079). 
}

\subjclass[2020]{Primary 35B44; Secondary 35B33, 35K51, 35Q92, 92C17}%

\keywords{Chemotaxis, indirect signal production, finite-time blowup, critical mass}

\begin{abstract}
  We study the Neumann initial-boundary value problem for the parabolic-elliptic chemotaxis system, proposed by J\"ager and Luckhaus (1992). 
  We confirm that their comparison methods can be simplified and refined, applicable to seek the critical mass $8\pi$ concerning finite-time blowup in the unit disk. 
  As an application, we deal with a parabolic-elliptic-parabolic chemotaxis model involving indirect signal production in the unit ball of $\mathbb R^4$, proposed by Tao and Winkler (2025). 
  Within the framework of radially symmetric solutions, 
  we prove that 
  \begin{itemize}
    \item if initial mass is less than $64\pi^2$, then solution is globally bounded; 
    \item for any $m$ exceeding $64\pi^2$, there exist nonnegative initial data with prescribed mass $m$ 
    such that the corresponding classical solutions exhibit a formation of Dirac-delta type singularity in finite time, termed a chemotactic collapse.
  \end{itemize}
\end{abstract}

\maketitle
\markleft{X.~Mao, M.~Liu and Y.~Li}

\section{Introduction}\label{introduce section}

This paper is concerned with finite-time blowup of radially symmetric classical solutions to the following chemotaxis models \eqref{sys: ks isp pep} accounting for indirect signal production 
\begin{align}
  \label{sys: ks isp pep}
  \begin{cases}
      u_t = \Delta u - \nabla \cdot(u\nabla v),&  x\in\Omega, t>0,\\
      0 =  \Delta v - \mu(t) + w,&   x\in\Omega,	t>0,\\
      w_t  = \Delta w - \delta w + u, &   x\in\Omega, t > 0,\\
      \partial_\nu u = \partial_\nu v = \partial_\nu w = 0 , &  x\in\partial\Omega, t >0,\\
      (u(\cdot, 0), w(\cdot,0)) = (u_0, w_0), & x\in\Omega,
  \end{cases}
\end{align}
in a ball $\Omega := B_R \subset\mathbb R^n$ centering at $0$ for some $n\in\mathbb N$ and $R>0$, 
where
  \begin{equation}
  \label{sym: mu}
    \mu(t) := \frac{1}{|\Omega|}\int_\Omega w \dd x 
    \quad\text{for } t > 0,
  \end{equation}
$\partial_\nu$ denotes the derivative with respect to the outward normal of $\partial\Omega$, 
$|\Omega|$ is the Lebesgue measure of $\Omega\in\mathbb{R}^n$,
and initial functions $u_0\in W^{1,\infty}(\Omega)$ and 
$w_0\in W^{1,\infty}(\Omega) $ are nonnegative and radially symmetric. 
Here and thereafter, $\delta\in\mathbb{R}$ is a positive constant.

The system~$\eqref{sys: ks isp pep}$, proposed and studied by Tao and Winkler~\cite{Tao2025} recently, is the J\"ager-Luckhaus variant~\cite{Jaeger1992} of its fully parabolic version
\begin{equation}
    \label{sys: ks isp ppp}
    \begin{cases}
        u_t = \Delta u - \nabla \cdot(u\nabla v), &  x\in\Omega, t>0,\\
        v_t =  \Delta v - v + w, &   x\in\Omega,	t>0,\\
        w_t  = \Delta w - w + u, &   x\in\Omega, t > 0,
    \end{cases}
\end{equation}
describing complex and realistic biological phenomena, beyond the scope of what the classical Keller-Segel model~\cite{Keller1970} can do. 
For example, \eqref{sys: ks isp ppp} models effects of phenotypical heterogeneity on population aggregation~\eqref{sys: ks isp pep}~\cite{Macfarlane2022}.
A feature of \eqref{sys: ks isp ppp} is the assumption that the population is divided into two phenotypes: 
one denoted by $u$, performing chemotactic movement towards location of higher concentration $v$ of chemical signals, 
and the other denoted by $w$, mediating signals, namely indirect signal production mechanisms.  
The model \eqref{sys: ks isp ppp} has applications to spatial memory movement of animals~\cite{Shi2021} as well. 
See the review~\cite{Winkler2025} for more biological backgrounds.

The studies of \eqref{sys: ks isp pep} can be traced back to the pioneering work~\cite{Jaeger1992} due to J\"ager and Luckhaus, 
who proposed the parabolic-elliptic chemotaxis system in the unit disk $\Omega = B_1\subset\mathbb{R}^2$
\begin{equation}
  \label{sys: JL system}
  \begin{cases}
    u_t = \Delta u - \nabla\cdot(u\nabla v), & x\in\Omega, t > 0,\\
    0 = \Delta v - \tilde\mu + u, & x\in\Omega, t > 0,\\
    \partial_\nu u = \partial_\nu v = 0, & x\in\partial\Omega, t>0,\\
    u(\cdot,0) = u_0(\cdot), & x\in\Omega,
  \end{cases}
  \quad \text{with } 
  \tilde\mu := \frac{1}{|\Omega|}\int_\Omega u_0 \dd x.
\end{equation}
They employed a comparison argument to 
detect the existence of finite-time blowup at large mass levels rigorously, i.e., $\int_\Omega u_0\dd x\gg1$, 
which will be discussed thoroughly in Section~\ref{sec: motivations}.
Their work is far-reaching~\cite{Cieslak2008,Tao2017,Winkler2019,Tello2022,Tao2025} but incomplete in the sense of capturing the critical mass $8\pi$ regarding finite-time radial blowup as Nagai done.  

Nagai~\cite{Nagai1995} introduced $2$nd-moment methods 
and found
\begin{itemize}
  \item if $\int_\Omega u_0\dd x < 8\pi$, then solutions are globally bounded;
  \item for any $m>8\pi$, there exists $\epsilon > 0$ such that for any radially symmetric and nonnegative function $u_0\in C^0(\overline\Omega)$ complying with $\int_\Omega u_0\dd x = m$ and $\int_\Omega |x|^2u_0\dd x < \epsilon$, 
  the corresponding classical solution to \eqref{sys: JL system} blows up in finite time $T_{\max} \in(0, \infty)$.
\end{itemize}
Reversely, if a radially symmetric solution of \eqref{sys: JL system} blows up, 
then it happens in finite time and the initial mass exceeds $8\pi$~\cite{Ohtsuka2007,Mao2024c}. 
Radially symmetric exploding solution exhibits a chemotactic collapse~\cite{Senba2001,Senba2004}, i.e., 
\begin{equation}
  \label{eq: u to 8pi delta}
  u(\cdot,t)\rightharpoonup m_0\delta_0(\cdot) + f(\cdot) 
  \quad \text{in }
  \mathcal{M}(\overline{\Omega}) := (C_0(\overline{\Omega}))^\prime 
  \text{ as } t\uparrow T_{\max},
\end{equation}
for some $m_0\geq8\pi$ and nonnegative, radially symmetric and integrable function $f\in C^0_{\loc}(\overline{\Omega}\setminus\{0\})$.  
Here and below, $\delta_0\in \mathcal{M}(\overline{\Omega})$ denotes Dirac-delta distribution function with unit weight at $0$.
A lot of analysis activities~\cite{Herrero1996,Senba2004,Suzuki2013,Collot2022,Mizoguchi2022} \textit{suggest} that $m_0 = 8\pi$.
We shall mention that the unique solution (up to translation and scaling) 
\begin{equation}
  \label{sym: V_0}
  V_0(x) := \frac{8}{(1+|x|^2)^2} 
  \quad \text{for } x\in\mathbb{R}^2
\end{equation}
to the stationary problem
\begin{equation}
  \label{sys: V_0}
  -\Delta \ln V_0 = V_0, \quad x\in\mathbb R^2
\end{equation}
of the Patlak-Keller-Segel system~\cite{Chen1991},
serves as a building block of studies in various chemotaxis models. 
For example, \eqref{sym: V_0} has been applied to construct subsolutions \cite{Jaeger1992,Tao2017,Winkler2019}, supersolutions~\cite{Tao2017,Winkler2019,Mao2024c},
initial data~\cite{Horstmann2001,Laurencot2019,Fujie2021} 
and special solutions~\cite{Herrero1996,Collot2022,Mizoguchi2022}.

In the direction of seeking critical mass for chemotactic collapse via maximum principle, 
Tao and Winkler~\cite{Tao2017} initialized the study on infinite-time blowup of indirect chemotaxis  
\begin{equation}
  \label{sys: ks isp peo}
  \begin{cases}
    u_t = \Delta u - \nabla\cdot(u\nabla v), &  x\in\Omega, t>0,\\
    0 = \Delta v - \mu(t) + w, &  x\in\Omega, t>0,\\
    w_t = - \delta w + u, &  x\in\Omega, t>0,
  \end{cases}
\end{equation}
in the unit disk, 
modeling cluster attack of beetles~\cite{Strohm2013}, 
where non-diffusive nesting beetles $w$ secret the signal $v$ that attracts flying beetles $u$.
Besides global solvability of \eqref{sys: ks isp peo},  
they showed that under symmetry assumptions, 
\begin{itemize} 
\item initial mass $\int_\Omega u_0\dd x < 8\pi\delta$ implies solutions remain bounded, 
\item but for any $m>8\pi\delta$, there exist initial data satisfying $\int_\Omega u_0\dd x = m$ such that solutions grow up at least exponentially~\cite{Tao2017}, 
\end{itemize}
which also exhibit a chemotaxis collapse with complete mass~\cite{Mao2024}, i.e.,
\begin{equation*}
  u(\cdot,t) 
  \rightharpoonup 
  \Big(\int_\Omega u_0\dd x\Big) 
  \cdot \delta_0(\cdot) 
  \quad\text{in } \mathcal{M}(\overline{\Omega}), 
  \text{ as } t\to\infty.
\end{equation*}

For the fully parabolic indirect chemotaxis~\eqref{sys: ks isp ppp},
Fujie and Senba~\cite{Fujie2017} established global boundedness of classical solutions to \eqref{sys: ks isp ppp} in physical spaces $n\leq3$, and 
identified a four-dimensional critical mass phenomenon:
\begin{itemize}
  \item  
  Under symmetry assumptions, $\int_\Omega u_0 < 64\pi^2$ implies the solution exists globally and remains bounded. 
  Without radial symmetry, the same conclusion holds for \eqref{sys: ks isp ppp} under certain mixed boundary conditions~\cite{Fujie2017}. 
  \item For any $m \in(64\pi^2,\infty)\setminus 64\pi^2\mathbb N$, there exist initial data satisfying 
  $\int_\Omega u_0 = m$ such that 
  solutions of \eqref{sys: ks isp ppp} under the mixed boundary conditions blow up in finite or infinite time~\cite{Fujie2019}.
\end{itemize}
In five and higher dimensions, 
authors of~\cite{Mao2025} showed that for any $m>0$, 
there exist initial data with $\int_\Omega u_0\dd x = m$ such that solutions of the system \eqref{sys: ks isp ppp} blow up in finite time with symmetry assumptions.
Existence of finite-time blowup solutions in four dimensions has been still open.

Considering volume-filling effect~\cite{Painter2002}, Ding and Wang~\cite{Ding2019} studied a general version of \eqref{sys: ks isp ppp}, where the dynamics of $u$ is rather governed by 
\begin{equation*}
  \label{eq: quasi-linear}
  u_t = \nabla\cdot(D(u)\nabla u - S(u)\nabla v), \quad x\in\Omega, t > 0.
\end{equation*}
For brevity, here we take prototypical choices of $D$ and $S$ accordingly 
\begin{equation*}
  D(\xi) = (1+\xi)^{-\alpha} 
  \quad \text{and} \quad 
  S(\xi) = (1+\xi)^{\beta -1}\xi
  \quad \text{for } \xi\geq0
\end{equation*}
with some $\alpha,\beta\in\mathbb{R}$.
They showed that $\alpha + \beta < \min\{1+2/n,4/n\}$ implies that classical solutions of the general version exist globally and remains bounded. 
Concerning critical lines of finite-time blowup,
Tao and Winkler~\cite{Tao2025} developed a robust comparison argument for a cooperative parabolic system to deal with the J\"ager-Luckhaus variant of the general version, 
and proved that if $\alpha + \beta > 4/n$ and $\beta > 2/n$ for $n\geq3$, 
then there exist radially symmetric initial data such that solutions blow up in finite time.
Mao and Li~\cite{Mao2024a} focused on the case of $\alpha + \beta > 4/n$ but $\beta < 2/n$, 
and established that $\beta<2/n$ for $n\geq2$ is sufficient for global existence of classical solutions to the general version, 
and that $\alpha + \beta > 4/n$ for $n\geq4$ warrants that for any prescribed mass $m>0$, 
one can find radially symmetric initial data $(u_0,v_0,w_0)$ with  $\int_\Omega u_0\dd x = m$ such that solutions explode in finite or infinite time.

See \cite{Laurencot2019,Laurencot2021,Painter2023,Laurencot2024,Hosono2025,Chen2025} and reference therein for related studies on indirect chemotaxis.

\subsection*{Main results}

Keeping the common feature~\eqref{eq: u to 8pi delta} of chemotactic collapse in mind,
we are ambitious to refine comparison arguments of J\"ager and Luckhaus to reach the critical mass regarding blowup.

\begin{proposition}
  \label{prop: finite-time blowup in JL system}
  Let $\Omega$ be the unit disk.
  For any $m>8\pi$, 
  there exist nonnegative, increasing and concave functions $\tilde M$ from $C^1([0,1])$ and a time $T^\star\in(0,\infty)$ such that  
  \begin{equation}
    \label{eq: boundary conditions}
    \tilde M(0) = 0
    \quad\text{and}\quad
    \tilde M(1) < m,
  \end{equation} 
  and that whenever 
  \begin{equation}
    \label{h: initial data u0}
    u_0\in C^0(\overline\Omega) 
    \text{ is radially symmetric and nonnegative}
  \end{equation}
  with 
  \begin{equation}
    \label{h: initial mass constraint of JL}
    \int_\Omega u_0 \dd x \geq m
  \end{equation}
  as well as 
  \begin{equation}
    \label{h: initial comparison on u0}
    \int_{B_{\rho^{1/2}}} u_0 \dd x \geq \tilde M(\rho) 
    \quad\text{for all } \rho\in(0,1),
  \end{equation}
  then the corresponding classical solution of \eqref{sys: JL system} blows up in finite time $T_{\max}\leq T^\star$ in the following sense:
  There exist $T_{\max } \in(0, T^{\star}]$ as well as uniquely determined functions
\begin{equation}
  \label{eq: function class of JL}
\begin{cases} 
u \in C^0(\overline{\Omega} \times[0, T_{\max })) \cap C^{2,1}(\overline{\Omega} \times(0, T_{\max })) 
\quad \text { and }\\
v \in C^{2,0}(\bar{\Omega} \times(0, T_{\max }))  \\
\end{cases}
\end{equation}
such that $u \geq 0$ in $\Omega \times(0, T_{\max })$, 
that $\int_{\Omega} v(\cdot, t) \dd x=0$ for all $t \in(0, T_{\max })$, 
that 
\begin{equation}
  \label{eq: extensibility}
  \limsup _{t \uparrow T_{\max }}\|u(\cdot, t)\|_{L^{\infty}(\Omega)}=\infty,
  \end{equation}
and that \eqref{sys: JL system} is satisfied in the classical sense in $\Omega \times(0, T_{\max })$.
\end{proposition}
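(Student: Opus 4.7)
The plan is to reduce the radial problem to a scalar quasilinear equation for the cumulative mass, construct a time-dependent subsolution modeled on the Chandrasekhar profile \eqref{sym: V_0}, and transfer its finite-time singularity back to $u$ via comparison. Concretely, setting $M(s,t) := \int_{B_{\sqrt s}} u(\cdot,t)\dd x$ for $s\in[0,1]$, integrating the $u$-equation over $B_{\sqrt s}$ and using the Poisson equation $\Delta v = \tilde\mu - u$ to eliminate $v_r$ yields, by a standard computation,
\[
M_t \;=\; 4 s M_{ss} \;+\; \frac{1}{\pi} M M_s \;-\; \tilde\mu \,s\, M_s, \qquad (s,t)\in(0,1)\times(0,T_{\max}),
\]
with $M(0,t) = 0$ and $M(1,t) = \int_\Omega u_0\dd x$. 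Since $u$ and $M_s/\pi$ agree pointwise on the radial slice, the blowup criterion \eqref{eq: extensibility} is equivalent to divergence of $M_s(0^+,\cdot)$, and the above scalar quasilinear equation admits a parabolic comparison principle within the class of smooth nondecreasing profiles.

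Guided by $V_0$ in \eqref{sym: V_0}, whose cumulative radial mass is the M\"obius profile $8\pi\sigma/(1+\sigma)$, I would try a matched pair in which a concentrating smooth branch $As/(s+B(t))$ — with $A$ chosen just above the critical threshold $8\pi$ — is capped by a constant $L<m$ to respect the boundary data, i.e.\
\[
\underline M(s,t)\;=\;\min\!\left\{\frac{A\,s}{s+B(t)},\,L\right\}.
\]
Plugging the smooth branch into the $M$-equation reduces the subsolution inequality to a differential inequality $-B'(t) \le \tilde\mu B(t) - B(t)(A/\pi - 8)/(1+B(t))$, whose right-hand side becomes negative provided $A > \pi\tilde\mu + 8\pi$, thereby forcing $B$ to decrease; a suitable refinement of this ansatz (for instance by allowing $A$ to grow in $t$, or by iterating the comparison after $B$ has become small so as to concentrate at ever finer scales) should then promote exponential decay to a finite-time blowup at some explicit $T^\star$, at which instant $\underline M_s(0^+,\cdot) = A/B \to \infty$. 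The flat branch $\underline M \equiv L$ satisfies the sub-PDE trivially, and the concave kink at $s_*(t) = B(t)L/(A-L)$ is compatible with a viscosity-type subsolution reading. Choosing $\tilde M(\rho) := L\rho/(\rho+B_0^\star)$ for a sufficiently small $B_0^\star > 0$ yields the required $C^1$, strictly concave, increasing profile with $\tilde M(0) = 0$ and $\tilde M(1) = L/(1+B_0^\star) < m$, and by construction $\tilde M$ dominates $\underline M(\cdot,0)$ uniformly over the admissible range of $\tilde\mu$.

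Under \eqref{h: initial data u0}--\eqref{h: initial comparison on u0}, the pointwise ordering $M(\cdot,0) \ge \underline M(\cdot,0)$ then holds on $[0,1]$, and the boundary values of $M$ satisfy $M(0,t) = 0 = \underline M(0,t)$ together with $M(1,t) = \int_\Omega u_0\dd x \ge m > L \ge \underline M(1,t)$ in view of mass conservation and \eqref{h: initial mass constraint of JL}. A scalar parabolic comparison then propagates $M \ge \underline M$ on $(0,1)\times(0,\min\{T_{\max}, T^\star\})$; since $\underline M_s(0^+, t) \to \infty$ as $t \uparrow T^\star$, the pointwise bound $M_s(0^+,t) \ge A/B(t)$ forces $T_{\max} \le T^\star$ and the $L^\infty$ blowup criterion \eqref{eq: extensibility}. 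Local well-posedness, uniqueness, and the regularity within the class \eqref{eq: function class of JL} follow from the standard parabolic theory for the $u$-equation combined with elliptic regularity for the Poisson reconstruction of $v$ under the zero-mean normalisation $\int_\Omega v\dd x = 0$.

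The hardest step is undoubtedly the subsolution construction. The stationary Chandrasekhar profile saturates at total mass exactly $8\pi$, so to exceed this critical threshold one must deform it in time; a single-parameter deformation $\underline M = As/(s+B(t))$ tends to yield only exponential, not finite-time, decay of $B$, so one must enrich the ansatz — by iterating the comparison at ever finer scales once $B$ has become small, or by replacing the M\"obius profile by a genuine self-similar blowup profile $\Phi(s/\kappa(t))$ — in order to force $B$ to vanish in finite time. A further difficulty specific to the J\"ager-Luckhaus formulation is the outward drift $-\tilde\mu s M_s$, which is uncontrolled from above because \eqref{h: initial mass constraint of JL} only bounds $\tilde\mu$ from below; the saving grace is that this drift is weighted by $s$ and hence is weakest precisely where concentration must take place, so that by matching the smooth branch to a flat cap $L < m$ one effectively decouples the concentration dynamics near the origin from the drift-dominated regime near $s = 1$. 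Exploiting this locality uniformly in $\tilde\mu \ge m/\pi$ is the essence of the refinement of the J\"ager-Luckhaus comparison argument announced in the abstract.
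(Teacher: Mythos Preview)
Your overall strategy—reduce to a scalar equation for the cumulative mass and compare with a M\"obius-type subsolution—matches the paper. The gap is in the subsolution itself.

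First, a sign slip: the differential inequality for the smooth branch reads
\[
-B' \;\le\; \frac{B(A/\pi-8)}{s+B}\;-\;\tilde\mu B,
\]
not with the signs you wrote; the right-hand side is \emph{positive} when $A>8\pi+\pi\tilde\mu$, which is what allows $B$ to decrease. More seriously, as you already diagnose, with constant $A$ this yields at best exponential decay of $B$, hence only infinite-time gradient blowup. Your proposed remedies—``iterating the comparison at ever finer scales'' or passing to an unspecified ``self-similar profile $\Phi(s/\kappa(t))$''—are not worked out: iterating finite waiting times typically gives a divergent sum, and no explicit profile is produced. So the proof proposal does not actually establish a finite $T^\star$.

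The paper closes this gap without any matching, cap, or iteration. It takes a globally smooth subsolution with \emph{two} time-dependent parameters,
\[
\um(\rho,t)=\frac{a(t)\,\rho}{\rho+\tau(t)^{3}},\qquad \tau(t)=\varepsilon-\ell t,\quad a(t)=4\e^{\tau(t)},
\]
so that the amplitude $a\downarrow 4$ (exactly critical) while the scale $\tau^{3}\downarrow 0$ in finite time. The cubic exponent is the point: writing $\xi=\tau^{2}/(\rho+\tau^{3})$, the inequality $\Theta\um\le 0$ reduces to the $\tau$-free quadratic
\[
-8\xi^{2}+\Big(3\ell+\frac{m\varepsilon}{\pi}\Big)\xi-\ell\;\le\;0,
\]
whose discriminant is made nonpositive by choosing $\varepsilon,\ell$ small. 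Here $-\ell$ comes from $a'/a$, the linear term from $(\tau^{3})'$ together with the drift, and $-8\xi^{2}$ from $a-4=4(\e^{\tau}-1)\ge 4\tau$; with a power $\tau^{k}$ in the denominator one checks that $k=3$ is exactly what makes the $\xi^{2}$-coefficient independent of $\tau$. Blowup at $T^{\star}=\varepsilon/\ell$ is then detected not via $\um_\rho(0^{+},t)\to\infty$ but more robustly by $\um(\rho,t)\to 4$ for every $\rho>0$, which is incompatible with $M(\rho,T^{\star})\le\tfrac12\rho\,\|u(\cdot,T^{\star})\|_{L^{\infty}}$.

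In short, the missing idea is this calibrated two-parameter ansatz (cubic scale, amplitude $4\e^{\tau}$); your single-parameter or matched constructions do not produce a finite blowup time.
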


\begin{remark}
  From a technical perspective, 
  besides the 2nd-moment function mentioned above 
  \begin{equation*}
    \int_\Omega |x|^2 u(x,t)\dd x 
    \quad\text{for } t\in(0,T_{\max}),
  \end{equation*} 
  introduced by Nagai~\cite{Nagai1995},
a generalized moment function 
\begin{equation*}
  \frac{1}{p} \int_0^1\bigg(\int_r^1 u(\rho,t) \rho \dd \rho\bigg)^p r \dd r 
  \quad\text{for } t\in(0,T_{\max})
\end{equation*}
was proposed by Cie\'{s}lak and Lauren\c{c}ot~\cite{Cieslak2009}
to detect finite-time blowup at large mass levels with some $p\geq1$.
Winkler~\cite{Winkler2019} established a spatially localized criterion on initial data with supercritical mass for finite-time blowup via a weighted functional
\begin{equation*}
  \int_0^{s_0}(s_0-s)^q M(s,t)\dd s 
  \quad\text{for } t\in(0,T_{\max}),
\end{equation*}   
with some $0<s_0\ll1$ and $q\gg1$, first introduced in~\cite{Winkler2018}. 
On the basis of explicit solutions~\eqref{sym: V_0},
We will construct special functions $\underline{u}\in C^0_{\loc}(\overline\Omega\times[0,T^\star))$ with the property, inter alia,
\begin{equation*}
  \underline{u}(\cdot, t) \rightharpoonup 8\pi\delta_0(\cdot) 
  \quad \text{in } \mathcal{M}(\overline{\Omega}), 
  \text{ as } t\uparrow T^\star,
\end{equation*}
to show Proposition~\ref{prop: finite-time blowup in JL system}.
\end{remark}

Our second objective is to explore finite-time blowup in the critical case $\alpha + \beta = n/4$ with particular choices $\alpha = 0$ and $\beta = 1$ (of course $n=4$), i.e., \eqref{sys: ks isp pep} in the critical dimension. 
Our main result states that initial data $(u_0,w_0)$ evolve into finite-time blowup solutions, 
provided that $u_0$ with mass exceeding $64\pi^2$ and $w_0$ with mass exceeding $16\pi^2$ aggregate near the origin in an appropriate sense.

\begin{theorem}
  \label{thm: finite-time blowup}
  Assume $\Omega = B_1 \subset\mathbb R^4$.
  Then for any $m > 64\pi^2$ and $\kappa > 0$,
  there exist nonnegative, increasing and concave functions $M^{(u)}$ and $M^{(w)}$ from $C^1([0,1])$ and a time $T\in(0,\infty)$ such that  
  \begin{equation}
    \label{eq: vanish at zero}
    M^{(u)}(0) = M^{(w)}(0) = 0,
  \end{equation}
  that 
  \begin{equation}
    \label{eq: estimates at right end}
    M^{(u)}(1) < m 
    \quad\text{and}\quad 
    M^{(w)}(1) \in \bigg(16\pi^2, 16\pi^2\cdot\frac{1+2\kappa}{1+\kappa}\bigg),
  \end{equation} 
  and that whenever 
  \begin{equation}
    \label{h: initial data}
    u_0\in W^{1,\infty}(\Omega) 
    \text{ and }
    w_0\in W^{1,\infty}(\Omega)
    \text{ are radially symmetric and nonnegative}
  \end{equation}
  with 
  \begin{equation}
    \label{h: initial mass constraint}
    \int_\Omega u_0 \dd x = m
    \quad \text{and}\quad 
    \int_\Omega w_0\dd x \leq 16\pi^2\bigg(1+\kappa+\frac{1}{\kappa}\bigg),
  \end{equation}
  as well as 
  \begin{equation}
    \label{h: initial comparison}
    \int_{B_{s^{1/4}}} u_0 \dd x \geq M^{(u)}(s) 
    \quad \text{and}\quad 
    \int_{B_{s^{1/4}}} w_0 \dd x \geq M^{(w)}(s) 
    \quad\text{for all } s\in(0,1),
  \end{equation}
  then the corresponding classical solution of \eqref{sys: ks isp pep} 
  given by Proposition~\ref{prop: local existence and uniqueness} blows up in finite time $T_{\max}\leq T$. 
\end{theorem}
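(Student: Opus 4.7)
The plan is to adapt the strategy of Proposition~\ref{prop: finite-time blowup in JL system} to the coupled setting~\eqref{sys: ks isp pep} by reducing the problem to a cooperative parabolic comparison in radial mass variables and then exhibiting an explicit subsolution pair whose collapse at time $T$ forces finite-time blowup of the true solution through Proposition~\ref{prop: local existence and uniqueness}.

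\emph{Step 1 (Mass reduction to a cooperative system).} Introduce the radial mass variables
\begin{align*}
  U(s,t) := \int_{B_{s^{1/4}}} u(x,t)\dd x, \qquad
  W(s,t) := \int_{B_{s^{1/4}}} w(x,t)\dd x,
\end{align*}
for $(s,t)\in[0,1]\times[0,T_{\max})$. Integrating the elliptic equation in~\eqref{sys: ks isp pep} over radial shells gives $r^3 v_r = \mu(t) s/4 - W(s,t)/(2\pi^2)$, and inserting this into the $u$-equation after integration by parts over balls reveals that $(U,W)$ solves the cooperative system
\begin{align*}
  U_t &= 16 s^{3/2} U_{ss} + \Big(\tfrac{2}{\pi^2} W - \mu(t) s\Big) U_s, \\
  W_t &= 16 s^{3/2} W_{ss} + U - \delta W,
\end{align*}
on $(0,1)\times(0,T_{\max})$, with $U(0,t) = W(0,t) = 0$, $U(1,t)\equiv m$, and $W(1,t)$ governed by the ODE $\frac{d}{dt} W(1,t) = m - \delta W(1,t)$ coming from the Neumann condition $W_{ss}(1,t)=0$. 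Cooperativity is transparent: the $U$-equation is monotone in $W$ (since $U_s\geq0$), and the $W$-equation is monotone in $U$.

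\emph{Step 2 (Explicit concentrating subsolution pair).} Modeled on the stationary profile $V_0$ in~\eqref{sym: V_0}, whose planar mass function is $8\pi s/(s+1)$, I would look for a pair $(\underline U,\underline W)$ on $[0,1]\times[0,T^\star)$ of ansatz type
\begin{align*}
  \underline U(s,t) = A\cdot\frac{s}{s+\varepsilon(t)}, \qquad
  \underline W(s,t) = B\cdot\frac{s}{s+\eta(t)} + C\cdot s,
\end{align*}
with $A$ chosen just above $64\pi^2$, $B$ just above $16\pi^2$, $C$ calibrated to the slack parameter $\kappa$ in~\eqref{h: initial mass constraint}, and $\varepsilon(t),\eta(t)\downarrow 0$ as $t\uparrow T^\star$ along tuned power laws. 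The affine background $C\cdot s$ absorbs the extra $w$-mass permitted by~\eqref{h: initial mass constraint} and delivers the right-end bound $\underline W(1,0) \in (16\pi^2,\,16\pi^2(1+2\kappa)/(1+\kappa))$ required in~\eqref{eq: estimates at right end}. Setting $M^{(u)}(s):=\underline U(s,0)$ and $M^{(w)}(s):=\underline W(s,0)$, both functions are increasing, concave, and vanish at $s=0$ by construction, so~\eqref{eq: vanish at zero} and~\eqref{eq: estimates at right end} follow directly from the choice of $A,B,C$ and $\varepsilon(0),\eta(0)$.

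\emph{Step 3 (Comparison and conclusion).} Hypothesis~\eqref{h: initial comparison} yields $U(\cdot,0)\geq\underline U(\cdot,0)$ and $W(\cdot,0)\geq\underline W(\cdot,0)$, and~\eqref{h: initial mass constraint} together with the boundary ODE for $W(1,t)$ maintains $W(1,t)\geq\underline W(1,t)$ on $[0,T_{\max}\wedge T^\star)$. Applying the cooperative comparison principle in the style of~\cite{Tao2025} to the system of Step~1 then yields $U\geq\underline U$ and $W\geq\underline W$ throughout $[0,1]\times[0,T_{\max}\wedge T^\star)$. Since $\varepsilon(t)\downarrow 0$ at $t=T^\star$ forces $\underline U(s,t)\to A$ pointwise for every $s>0$ while $\underline U(0,t)\equiv 0$, the lower bound $U\geq\underline U$ implies that $u=\tfrac{2}{\pi^2} U_s$ must concentrate a Dirac mass of weight at least $A>64\pi^2$ at the origin by time $T^\star$, which is incompatible with the classical solvability from Proposition~\ref{prop: local existence and uniqueness}. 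Hence $T_{\max}\leq T^\star=:T$.

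\emph{Main obstacle.} The technical heart of the argument lies in Step~2: the coupled ansatz has to be tuned so that both subsolution inequalities hold strictly on $(0,1)\times(0,T^\star)$ simultaneously, that the boundary constraints at $s=1$ (in particular the interplay between $\underline W(1,t)$ and $\mu(t)=W(1,t)/|\Omega|$ in the advection coefficient $\tfrac{2}{\pi^2}W-\mu s$) are respected, and that the critical thresholds $64\pi^2$ (for $u$-mass) and $16\pi^2$ (for $w$-mass) emerge naturally from the four-dimensional diffusion coefficient $16 s^{3/2}$ and the coupling coefficient $\tfrac{2}{\pi^2}$. This is where the indirect-signal-production mechanism amplifies the planar critical mass $8\pi$ into the four-dimensional threshold $64\pi^2=(8\pi)^2$ and forces the auxiliary floor $16\pi^2$ on the concentrating component of $\int_\Omega w_0 \dd x$.
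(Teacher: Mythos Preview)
Your overall architecture (Steps 1 and 3) is correct and coincides with the paper's: reduce to mass variables $(U,W)$ solving a cooperative system, freeze $\mu(t)$ by an upper bound $\mu^\star$, build a collapsing subsolution pair $(\underline U,\underline W)$, and derive a contradiction at $t=T$ via the comparison principle of \cite{Tao2025}.

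The genuine gap is in Step~2. Your ansatz
\[
\underline U=\frac{As}{s+\varepsilon(t)},\qquad \underline W=\frac{Bs}{s+\eta(t)}+Cs
\]
is modeled on the \emph{planar} profile $V_0$, but the four-dimensional diffusion operator is $16s^{3/2}\partial_{ss}$, not $4s\partial_{ss}$. This mismatch is fatal in the $\mathcal Q$-inequality: at the concentration scale $s\sim\eta$ one has
\[
-16s^{3/2}\underline W_{ss}\Big|_{s=\eta}=\frac{32B\eta\cdot\eta^{3/2}}{(2\eta)^3}=4B\,\eta^{-1/2}\to\infty,
\]
whereas $\underline U(\eta,t)$ stays bounded (it is at most $A$). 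Hence $\mathcal Q(\underline U,\underline W)=\underline W_t-16s^{3/2}\underline W_{ss}+\delta\underline W-\underline U$ becomes strictly positive near $s\sim\eta$ as $\eta\downarrow 0$, and $(\underline U,\underline W)$ is \emph{not} a subsolution. No tuning of the rates $\varepsilon(t),\eta(t)$ or of the affine piece $Cs$ can repair this, because the obstruction is the $s^{1/2}$-excess in the diffusion coefficient.

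The remedy, which is exactly what the paper does, is to take the ansatz from the \emph{four-dimensional} stationary problem of Fujie--Senba rather than from $V_0$: with $\lambda^{-2}$ replaced by the shrinking scale $\tau^3$,
\[
\underline W=\frac{bs}{s^{1/2}+\tau^3},\qquad \underline U=\frac{a(s^{3/2}+3\tau^3 s)}{(s^{1/2}+\tau^3)^3},
\]
where $a=32e^{(\gamma+1)\tau}$, $b=8e^{\tau}$ and $\tau=\varepsilon-\ell t$. The point of the $s^{1/2}$ in the denominator is that now $-16s^{3/2}\underline W_{ss}$ and $\underline U$ have the \emph{same} functional form $(s^{3/2}+3\tau^3 s)/(s^{1/2}+\tau^3)^3$, so they cancel up to the controllable factor $4b-a=32(e^\tau-e^{(\gamma+1)\tau})<0$. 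Similarly, in the $\mathcal P$-inequality the advection term $4\underline U_s\underline W$ and the diffusion $-16s^{3/2}\underline U_{ss}$ both take the shape $\tau^6 s/(s^{1/2}+\tau^3)^5$ and balance. The parameter $\kappa$ enters not through an affine background in $\underline W$ but through the choice of $\mu^\star$ and the constraint $e^\varepsilon<(1+2\kappa)/(1+\kappa)$, which pins $\underline W(1,0)$ into the interval \eqref{eq: estimates at right end}. Once you replace your Step~2 by this $4$D-adapted pair, your Steps~1 and~3 go through unchanged.
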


\begin{remark}
  If a function triplet $(u,v,w)$ solves the system~\eqref{sys: ks isp pep} posed in the unit ball $B_1\subset\mathbb{R}^4$, 
  then for $R>0$, 
  \begin{equation*}
    (u_R, v_R, w_R) := (R^{-4}u(R^{-1}x,R^{-2}t),v(R^{-1}x,R^{-2}t),R^{-2}w(R^{-1}x,R^{-2}t))
  \end{equation*} 
  solves the system~\eqref{sys: ks isp pep} posed in $B_R\subset\mathbb{R}^4$ with $\delta$ replaced by $R^{-2}\delta$. 
  So the scaling argument above suggests that $L^2(\Omega)$ is the critical norm of $w$, analogous to $L^1(\Omega)$ of $u$. 
  And the large mass condition of $w$ in our theorem is supposed to be inessential, but of technical restriction.

Though \eqref{sys: ks isp peo} and \eqref{sys: ks isp pep} share the indirect signal production mechanism, 
singularity properties of \eqref{sys: ks isp pep} are rather akin to the two-dimensional J\"ager-Luckhaus system~\eqref{sys: JL system}, as intuitively conjectured by Fujie and Senba~\cite{Fujie2017,Fujie2019} with respect to \eqref{sys: ks isp ppp}.
\end{remark}

For completeness, 
we show boundedness under subcritical mass settings, 
as a consequence of an $\varepsilon$-regularity derived from  Lyapunov functional and Adams inequality.

\begin{proposition}
  \label{prop: varepsilon regularity}
  Assume $\Omega = B_1 \subset\mathbb R^4$.
  Let $(u,v,w)$ be a radially symmetric classical solution of \eqref{sys: ks isp pep}. 
  If there exists $\varrho\in(0,1]$ such that 
  \begin{equation}
    \label{eq: supremum on local integral of u}
    \sup_{t\in(0,T_{\max})}\int_{B_\varrho}u(\cdot,t)\dd x < 64\pi^2,
  \end{equation}
  then 
  \begin{equation}
    \label{eq: L infinity u}
    \sup_{t\in(0,T_{\max})}\|u\|_{L^\infty(\Omega)} < \infty.
  \end{equation}
  In particular, for any radial symmetric initial data $(u_0,w_0)$ with \eqref{h: initial data} and 
  \begin{equation*}
    \int_\Omega u_0\dd x < 64\pi^2,
  \end{equation*}
  the corresponding classical solution exists globally and remains bounded.
\end{proposition}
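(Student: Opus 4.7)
The plan is an $\varepsilon$-regularity argument combining a quasi-Lyapunov functional for \eqref{sys: ks isp pep} with the sharp four-dimensional Adams inequality, in the spirit of Fujie and Senba's analysis of the fully parabolic model \eqref{sys: ks isp ppp}. Throughout, total mass $m:=\int_\Omega u_0\dd x$ is preserved along the flow, and I fix the gauge $\int_\Omega v\dd x=0$ so that $-\Delta v=w-\mu$ is invertible on the mean-zero subspace.

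The first step is to test the $u$-equation against $\log u$ and integrate by parts using $\Delta v=\mu-w$ to obtain
\[
\frac{d}{dt}\int_\Omega u\log u\dd x + 4\int_\Omega|\nabla\sqrt{u}|^2\dd x = \int_\Omega uw\dd x - \mu m.
\]
The chemotactic production $\int uw$ cannot be cancelled by the naive correction $-\int uv$ alone, because $v$ is only indirectly coupled to $u$. I would compensate by pairing the above with the analogous identities for $\int uv$ and $\|\nabla v\|_{L^2(\Omega)}^2$, where differentiating the elliptic equation in time and inserting $w_t=\Delta w-\delta w+u$ gives rise to cross terms involving the anti-derivative $(-\Delta)^{-1}(u-\bar u)$. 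Tuning the coefficients so that every occurrence of $\int uw$ cancels produces a functional $\mathcal F$---roughly a linear combination of $\int u\log u$, $\int uv$, $\|\nabla v\|_{L^2}^2$ and $\|w-\mu\|_{L^2}^2$---satisfying
\[
\frac{d}{dt}\mathcal F + 4\|\nabla\sqrt{u}\|_{L^2(\Omega)}^2 + \delta\|\nabla v\|_{L^2(\Omega)}^2 + \|w-\mu\|_{L^2(\Omega)}^2 \leq C\int_\Omega u|\nabla v|^2\dd x + C.
\]
The remaining bad term is controlled by $\|\Delta v\|_{L^2}^2\,\|\nabla\sqrt{u}\|_{L^2}^2$ via the Sobolev embedding $W^{2,2}\hookrightarrow W^{1,4}$, which is precisely a four-dimensional coincidence, and $\|\Delta v\|_{L^2}^2=\|w-\mu\|_{L^2}^2$ is itself part of the dissipation.

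The critical constant $64\pi^2$ enters through the Adams inequality
\[
\int_{B_1}\exp\!\bigl(32\pi^2\,\phi^2/\|\Delta\phi\|_{L^2}^2\bigr)\dd x \leq C \quad\text{for }\phi\in W^{2,2}(B_1)\text{ with }\int\phi=0,
\]
applied to $\phi=v$, combined with the Fenchel--Young splitting $\int_\Omega uv\leq\lambda\int_\Omega u\log u+C(\lambda)\log\int_\Omega e^{v/\lambda}\dd x$. The hypothesis $\sup_t\int_{B_\varrho}u<64\pi^2$ supplies exactly the slack required---heuristically, $64\pi^2=2\cdot32\pi^2$, with the factor two originating from the standard radial reduction on $B_\varrho$---to absorb $\int uv$ into the entropy and close the inequality, yielding $\sup_{t\in(0,T_{\max})}\int_\Omega u\log u\dd x\leq C$. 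Once this $L\log L$ bound is in hand, a standard bootstrap (elliptic regularity and Adams to put $v$ in $L^\infty$, then maximal $L^p$-regularity for $u_t-\Delta u=-\nabla\cdot(u\nabla v)$ and an Alikakos--Moser iteration) upgrades this to the desired \eqref{eq: L infinity u}. The ``in particular'' part is immediate: mass conservation gives $\int_\Omega u(\cdot,t)\dd x=\int_\Omega u_0\dd x<64\pi^2$, so \eqref{eq: supremum on local integral of u} holds with $\varrho=1$.

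The main technical obstacle I foresee is the algebraic bookkeeping in the construction of $\mathcal F$: indirect signal production forces the appearance of the anti-derivative $(-\Delta)^{-1}(u-\bar u)$ with no analogue in the direct Keller--Segel case, and the self-consistency of the scheme---the absorption of $\int u|\nabla v|^2$ needs $\|w-\mu\|_{L^2}$ small, which is itself an output of the dissipation of $\mathcal F$---demands that the smallness be propagated in time. Radial symmetry simplifies this considerably by reducing the relevant Green's function identities to one-dimensional integrals in $r=|x|$.
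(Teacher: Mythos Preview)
Your overall strategy—energy functional plus sharp Adams plus bootstrap—is the paper's, but two points deserve correction.

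First, the Lyapunov structure is cleaner than you anticipate. The paper uses
\[
\mathcal{F}(u,v) = \int_\Omega u\ln u - \int_\Omega uv + \frac{1}{2}\int_\Omega|\Delta v|^2 + \frac{\delta}{2}\int_\Omega|\nabla v|^2,
\]
which satisfies an \emph{exact} dissipation identity $\frac{d}{dt}\mathcal{F} + \mathcal{D} = 0$ with $\mathcal{D}=\int_\Omega|\nabla v_t|^2+\int_\Omega u|\nabla(\ln u-v)|^2\geq 0$. There is no leftover term $\int_\Omega u|\nabla v|^2$ and no anti-derivative $(-\Delta)^{-1}(u-\bar u)$ enters; your trilinear remainder and the self-consistency worry it creates both vanish once the right combination is found. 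Since $\mathcal F$ is genuinely non-increasing, one only needs to bound $\int_\Omega uv$ from above by $\theta\int_\Omega u\ln u+\theta'\|\Delta v\|_{L^2}^2+C$ with $\theta<1$ and $\theta'<\tfrac12$ to extract a uniform bound on $\int_\Omega u\ln u$.

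Second, and more seriously, your localization to $B_\varrho$ is not carried out. Applying Adams on all of $\Omega$ produces a coefficient proportional to $\int_\Omega u=m$ in front of $\|\Delta v\|_{L^2}^2$, and $m$ may be arbitrarily large; the hypothesis only controls $\int_{B_\varrho}u$. The paper splits $\int_\Omega uv=\int_{B_\varrho}uv+\int_{\Omega\setminus B_\varrho}uv$. On $B_\varrho$ it applies the $H^2\cap H^1_0$ version of Adams to $\psi=\chi\bigl(v-v(\varrho,t)\bigr)$, which lies in $H^1_0(B_\varrho)$ precisely because $v$ is radial; this is the essential use of radial symmetry, and the resulting coefficient is $\frac{\chi}{128\pi^2}\int_{B_\varrho}u<\tfrac12$, absorbable into the $\tfrac12\|\Delta v\|_{L^2}^2$ term of $\mathcal F$. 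On the annulus $\Omega\setminus B_\varrho$ one uses the pointwise bound $|v(r,t)|\leq C\tilde m\,r^{-2}$, obtained by integrating the radial elliptic equation, so $\int_{\Omega\setminus B_\varrho}uv\leq C\varrho^{-2}m$. Your explanation of the factor two is also off: $64\pi^2=\tfrac12\cdot128\pi^2$ comes from the $\tfrac12$ in front of $\|\Delta v\|_{L^2}^2$ in $\mathcal F$, not from any radial improvement of the Adams constant.
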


From an independent interest, 
we show that a radially symmetric solution exploding in finite time, exhibits a chemotactic collapse,
as the second consequence of the aforementioned $\varepsilon$-regularity.

\begin{proposition}
  \label{prop: chemotactic collapse}
  Assume that $\Omega = B_1\subset\mathbb{R}^4$.
  Let $(u,v,w)$ be a radially symmetric solution of \eqref{sys: ks isp pep}. 
  If $T_{\max} < \infty$, 
  then
  there exist nonnegative integrable function $f\in C^0_{\loc}(\overline{\Omega}\setminus\{0\})$ and $m_\star \geq 64\pi^2$ such that 
  \begin{equation}
    \label{eq: weak star convergence of u}
    u(\cdot, t) \rightharpoonup m_\star \delta_0(\cdot) + f(\cdot) 
    \quad \text{in } \mathcal{M}(\overline{\Omega}) 
    \text{ as } t\uparrow T_{\max}.
  \end{equation}
\end{proposition}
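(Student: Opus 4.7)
The plan is to combine weak-$\ast$ compactness of the family $\{u(\cdot,t)\}_{t<T_{\max}}$ in $\mathcal{M}(\overline{\Omega})$ with the $\varepsilon$-regularity of Proposition~\ref{prop: varepsilon regularity} in order to identify the limit as $m_\star\delta_0+f$. First I would record mass conservation: integrating the first equation of \eqref{sys: ks isp pep} and using the no-flux boundary condition yields $\int_\Omega u(\cdot,t)\dd x=\int_\Omega u_0\dd x=:m_0$ for all $t\in(0,T_{\max})$. Since $L^1(\Omega)$ embeds continuously into $\mathcal{M}(\overline{\Omega})=(C_0(\overline{\Omega}))^\prime$, the Banach--Alaoglu theorem gives, along any sequence $t_k\uparrow T_{\max}$, a subsequential weak-$\ast$ limit $\mu\in\mathcal{M}(\overline{\Omega})$, which is radial and nonnegative.

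Next I would localize the singular support at the origin. For any $r\in(0,1)$, by radial symmetry the mass of $u(\cdot,t)$ concentrated inside $B_r$ is the only quantity sensitive to the Adams-threshold $64\pi^2$; a localized variant of the arguments behind Proposition~\ref{prop: varepsilon regularity} (applying the Lyapunov/Adams machinery to a cutoff around the annular region $\Omega\setminus B_r$, or equivalently invoking Proposition~\ref{prop: varepsilon regularity} after a standard radial rescaling) yields $\sup_{t\in(0,T_{\max})}\|u(\cdot,t)\|_{L^\infty(\Omega\setminus B_r)}<\infty$ for each such $r$. Parabolic Schauder estimates on each compact $K\Subset\overline{\Omega}\setminus\{0\}$ then provide uniform $C^{2+\alpha,1+\alpha/2}$ bounds, so that $u(\cdot,t)$ converges in $C^0_{\loc}(\overline{\Omega}\setminus\{0\})$ as $t\uparrow T_{\max}$ to some nonnegative radial function $f\in C^0_{\loc}(\overline{\Omega}\setminus\{0\})$. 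By Fatou this $f$ is integrable with $\int_\Omega f\dd x\leq m_0$, and setting $m_\star:=m_0-\int_\Omega f\dd x$ the Radon--Nikodym decomposition of any weak-$\ast$ limit $\mu$ of $u(\cdot,t_k)$ takes the form $\mu=m_\star\delta_0+f$; in particular the limit is independent of the subsequence, which upgrades the convergence to the full limit $t\uparrow T_{\max}$ by splitting an arbitrary test function $\varphi\in C(\overline{\Omega})$ into near-origin and far-from-origin parts.

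It remains to show $m_\star\geq 64\pi^2$, which I would achieve by contradiction. If $m_\star<64\pi^2$, then choosing $r\in(0,1)$ so small that $m_0-\int_{\Omega\setminus B_r}f\dd x<64\pi^2$, and using $u(\cdot,t)\to f$ uniformly on $\overline{\Omega}\setminus B_r$, one gets $\int_{B_r}u(\cdot,t)\dd x=m_0-\int_{\Omega\setminus B_r}u(\cdot,t)\dd x<64\pi^2$ for all $t$ close to $T_{\max}$. Local-in-time boundedness of $u$ on $[0,t_0]\times\overline{\Omega}$ for any $t_0<T_{\max}$ then extends this bound to the whole of $(0,T_{\max})$, so Proposition~\ref{prop: varepsilon regularity} applies and yields $\sup_{t<T_{\max}}\|u(\cdot,t)\|_{L^\infty(\Omega)}<\infty$, contradicting the blowup criterion \eqref{eq: extensibility} since $T_{\max}<\infty$.

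The main obstacle I expect is the second paragraph: turning the global $\varepsilon$-regularity of Proposition~\ref{prop: varepsilon regularity}, which is phrased for a ball $B_\varrho$ centered at $0$, into a genuinely local statement that confines blowup to the origin. For radial solutions this is a familiar manoeuvre---either by a localized Adams/Moser--Trudinger estimate around an annular region, or by working directly with the scalar equation satisfied by the accumulated mass $M(s,t):=\int_{B_{s^{1/4}}}u\dd x$ and using monotonicity of $M$ in $s$---but writing it out carefully, with the indirect production term $w$ handled consistently, is where the real work lies.
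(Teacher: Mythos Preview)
Your overall architecture---confine the singularity to the origin, extract $f$ as the locally uniform limit of $u$ away from $0$, set $m_\star$ to be the residual mass, and derive $m_\star\geq 64\pi^2$ by contradiction with Proposition~\ref{prop: varepsilon regularity}---matches the paper exactly. The ``main obstacle'' you flag, however, is not an obstacle at all in the paper: the required bound $\sup_{t\in(0,T_{\max})}\|u(\cdot,t)\|_{L^\infty(\Omega\setminus B_r)}<\infty$ is already contained in Lemma~\ref{le: pointwise estimate}, and it is obtained not by localizing the Adams/Lyapunov machinery but by the much softer route of applying Winkler's pointwise estimate for scalar drift--diffusion equations \cite[Theorem~1.1]{Winkler2020} to $u_t=\Delta u-\nabla\cdot(u\nabla v)$, using only the crude radial bound $|r^3 v_r|\leq C$ that follows from integrating the $v$-equation together with the $L^1$ control on $w$. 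This bypasses entirely the delicate annular Adams argument you anticipate. A minor stylistic difference: rather than invoking Banach--Alaoglu along subsequences and then arguing uniqueness of the limit, the paper uses the $C^{2,1}$ bounds from Lemma~\ref{le: pointwise estimate} to see directly that $t\mapsto\int_{B_r}u(\cdot,t)\dd x$ is Lipschitz on $(T_{\max}/2,T_{\max})$ for each fixed $r$, so the limit $\Xi(r):=\lim_{t\uparrow T_{\max}}\int_{B_r}u$ exists outright, and then sets $m_\star:=\lim_{r\downarrow 0}\Xi(r)$; this coincides with your $m_0-\int_\Omega f$.
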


This paper is organized as follows. 
In Section~\ref{sec: motivations}, 
by examining the study of finite-time radial blowup in the critical dimension emanated from J\"ager and Luckhaus, 
we illustrate our ideas.
In Section~\ref{sec: finite-time of indirect chemotaxis}, 
we give the proof of Theorem~\ref{thm: finite-time blowup}.
We show Proposition~\ref{prop: varepsilon regularity} and Proposition~\ref{prop: chemotactic collapse} in Section~\ref{sec: boundedness} and Section~\ref{sec: chemotactic collapse}, respectively.

\section{A case study. Proof of Proposition~\ref{prop: finite-time blowup in JL system}}
\label{sec: motivations}

This section is devoted to introduction of our ideas to detect finite-time chemotactic collapse in the critical dimension via comparison arguments, 
as refinements of original methods due to J\"ager and Luckhaus~\cite{Jaeger1992}. 

In the pioneering work~\cite{Jaeger1992}, J\"ager and Luckhaus proposed and studied the Neumann initial-boundary value problem~\eqref{sys: JL system},  
with  
\begin{equation*}
  \int_\Omega v(\cdot, t)\dd x = 0
  \quad\text{for all } t > 0\,.
\end{equation*}
They introduced the mass distribution function 
\begin{equation}
  M(\rho, t) := \int_0^{\rho^{1/2}}ru(r,t)\dd r,
  \quad (\rho,t)\in(0,1)\times(0,T_{\max})
\end{equation}
acting on the radially symmetric classical solution $(u,v)$ with the maximal existence time $T_{\max}\in(0,\infty]$ of \eqref{sys: JL system}.
Then $M$ solves the scalar parabolic problem 
\begin{subequations}
  \label{sys: Theta}
  \begin{equation} 
  \Theta M = 0 
  \quad\text{for all } (\rho,t)\in(0,1)\times(0,T_{\max}),
  \end{equation}
  with Dirichlet boundary conditions, 
  \begin{equation}
    M(0,t) = 0
    \quad\text{and}\quad 
    M(1,t) = \frac{m}{2\pi}
    \quad\text{for all } t\in(0,T_{\max})
  \end{equation}
  and obvious initial data 
  \begin{equation}
    \label{sym: M(rho,0)}
    M(\rho,0) := \int_0^{\rho^{1/2}}ru_0(r)\dd r, 
    \quad \rho\in(0,1)\,.
  \end{equation}
\end{subequations}
Here and below, the one-point degenerate parabolic operator $\Theta$ with action on $f\in C^{2,1}((0,1)\times (0,T))$ for some $T>0$ reads 
\begin{equation*}
  \Theta f := f_t - 4\rho f_{\rho\rho} - 2f_\rho \biggl(f - \frac{m\rho}{2\pi}\biggr).
\end{equation*}
In view of \cite[Theorem 4.1]{Winkler2010a}, the Dirichlet initial-boundary value problem~\eqref{sys: Theta} admits comparison arguments. 
To detect finite-time blowup, 
they constructed a gradient explosion auxiliary function $g$ satisfying  
\begin{equation*}
  \Theta g \leq 0 
\end{equation*} 
in the sense of distribution,   
where $g$ near the left lateral boundary $\{0\}\times\{t>0\}$ is particularly defined by 
\begin{equation}
  \label{sym: g}
  g := \frac{a\rho}{\rho + \tau^3}, 
  \quad \tau= \varepsilon-\ell t,
  \quad \text{for } \rho\in(0,\rho_0) \text{ and } t\in(0,\varepsilon/\ell),
\end{equation}
with certain positive constants $a>0$, $\varepsilon > 0$, $\ell > 0$ and $\rho_0\in(0,1)$.
Note that the ``stationary'' problem of \eqref{sys: Theta}
\begin{equation*}
  \begin{cases}
  4\rho F_{\rho\rho} + 2FF_\rho = 0, & \rho > 0,\\
  F(0) = 0, 
  \end{cases}
\end{equation*}
has a family of solutions 
\begin{equation*}
  F_\lambda (\rho) := F_0(\rho/\lambda^2) = \frac{4\rho}{\rho + \lambda^2},
  \quad\text{with } 
  F_0(\rho) := \frac{4\rho}{\rho + 1} 
  \quad \text{for } \lambda, \rho > 0,
\end{equation*}
which are mass distribution functions of solutions 
\begin{equation*}
  2F_\lambda^\prime \Bigm|_{\rho = |x|^2} 
  = \frac{8\lambda^2}{(|x|^2 + \lambda^2)^2}
  = \frac{1}{\lambda^2}V_0\bigg(\frac{x}{\lambda}\bigg), 
  \quad x\in\mathbb{R}^2,
\end{equation*}
to the stationary problem~\eqref{sys: V_0}. 
Therefore, $g$ might be thought as a perturbation of $F_{\tau^{3/2}}(\rho)$. 
With the construction~\eqref{sym: g} at hands, 
we have by $a>4$ and the concavity of $g$ with respect to $\rho$,
\begin{equation*}
  \Theta g 
  \leq g_t + \frac{m}{\pi}\cdot \rho g_\rho 
  \leq g_t + \frac{m}{\pi}\cdot g 
  \quad \text{for } (\rho,t)\in (0,1)\times(0,\varepsilon/\ell),
\end{equation*}
which encourages us to modify $g$ with a temporal decay at a slightly strong exponential rate such that $\Theta g \leq 0$ for $(\rho,t)\in (0,1)\times(0,\varepsilon/\ell)$. 

Indeed, we may perturb~\eqref{sym: g} to produce a subsolution over the entire parabolic cylinder.

\begin{lemma}
  \label{le: subsolution of JL}
  For any $m>8\pi$, there exist $\varepsilon>0$ and $\ell>0$ such that the function 
  \begin{equation}
    \um := \frac{a\rho}{\rho+\tau^3}, 
    \quad (\rho,t)\in [0,1]\times[0,\varepsilon/\ell)
  \end{equation}
  complies with 
  \begin{equation}
    \label{eq: estimates of um at right end}
    \sup_{t\in(0,\varepsilon/\ell)} \um(1,t) 
    < \frac{m}{2\pi} 
  \end{equation}  
  and 
  \begin{equation}
    \inf_{\rho\in(0,1)} \um_\rho(\cdot, t) > 0
    \quad \text{for all } t\in[0,\varepsilon/\ell),
  \end{equation}
  as well as 
  \begin{equation}
    \label{eq: Theta um leq 0}
    \Theta \um \leq 0 
    \quad \text{for all }\rho\in(0,1) 
    \text{ and } t\in(0,\varepsilon/\ell),
    \end{equation}
  where 
\begin{equation*} 
\tau = \tau(t) = \varepsilon - \ell t \quad \text{for } t\in(0,\varepsilon/\ell)
\end{equation*}
and 
\begin{equation*}
  a = a(t) = 4\e^\tau \quad \text{for } t\in(0,\varepsilon/\ell).
\end{equation*}
In particular,
\begin{equation*}
  \um(0,t)\equiv 0 
  \quad\text{for all } t\in(0,\varepsilon/\ell),
\end{equation*}
but 
  \begin{equation}
    \label{eq: um approach to 4}
    \um(\rho,t)\to4
    \quad\text{for all } \rho\in(0,1],
  \end{equation}
  as $t\uparrow\varepsilon/\ell$.
\end{lemma}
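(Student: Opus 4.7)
The plan is to realize $\underline M$ as a perturbation of the stationary profile $F_{\tau^{3/2}}(\rho)=4\rho/(\rho+\tau^3)$, which (as noted after~\eqref{sys: V_0}) satisfies the autonomous identity $4\rho F_{\rho\rho}+2FF_\rho=0$ exactly because each $F_\lambda$ is the mass-distribution function of the explicit stationary solution $\lambda^{-2}V_0(\cdot/\lambda)$. Inflating the leading constant from $4$ to $a=4e^\tau$ and letting $\tau=\varepsilon-\ell t$ decay linearly is designed to convert this stationary object into a subsolution of $\Theta$, with the exponential factor $e^\tau$ producing a negative $O(\tau^4)$ correction that dominates the competing positive contributions as $\tau\downarrow 0$.

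First I would verify the algebraic and boundary properties. Direct differentiation yields
\[
\underline M_\rho=\frac{a\tau^3}{(\rho+\tau^3)^2}>0
\quad\text{and}\quad
\underline M_{\rho\rho}=-\frac{2a\tau^3}{(\rho+\tau^3)^3}<0,
\]
so $\underline M(\cdot,t)$ is strictly increasing and concave and $\inf_{\rho\in(0,1)}\underline M_\rho(\cdot,t)=a\tau^3/(1+\tau^3)^2>0$ for every $t$. The identity $\underline M(0,t)=0$ is immediate, and for $\rho\in(0,1]$ the pointwise limit $\underline M(\rho,t)\to 4$ as $\tau\downarrow 0$ gives~\eqref{eq: um approach to 4}. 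For~\eqref{eq: estimates of um at right end}, note that $\underline M(1,t)=4e^\tau/(1+\tau^3)$ has $\tau$-derivative of the sign of $1-3\tau^2+\tau^3$, hence is increasing in $\tau$ for small $\tau$; thus $\sup_t\underline M(1,t)=4e^\varepsilon/(1+\varepsilon^3)\le 4e^\varepsilon$, which is strictly less than $m/(2\pi)$ whenever $\varepsilon<\log(m/(8\pi))$ -- an admissible upper bound because $m>8\pi$.

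The main step is~\eqref{eq: Theta um leq 0}. Computing all derivatives (using $a_t=-\ell a$ and $(\tau^3)_t=-3\ell\tau^2$) and putting everything over the common denominator $(\rho+\tau^3)^3$, I arrive at $\Theta\underline M=a\rho(\rho+\tau^3)^{-3}B(\rho,\tau)$, where $B(\cdot,\tau)$ is a downward parabola in $\rho$ with leading coefficient $-\ell$, linear coefficient $b(\tau)=3\ell\tau^2+\tau^3(m/\pi-2\ell)$ and constant coefficient $c(\tau)=3\ell\tau^5+\tau^6(m/\pi-\ell)+2\tau^3(4-a)$. The decisive input is the elementary bound $e^\tau\ge 1+\tau$, which gives $2\tau^3(4-a)\le -8\tau^4$. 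Evaluating $B$ at its interior maximizer $\rho_\star=b(\tau)/(2\ell)$, which lies well inside $(0,1)$ for small $\tau$, one obtains
\[
B(\rho_\star,\tau)=\frac{b(\tau)^2}{4\ell}+c(\tau)=\tau^4\Big(\frac{9\ell}{4}-8\Big)+\Big(\frac{3m}{2\pi}-4\Big)\tau^5+O(\tau^6)\quad\text{as }\tau\downarrow 0.
\]
Fixing $\ell\in(0,32/9)$ makes the leading $\tau^4$-coefficient strictly negative, and then taking $\varepsilon$ small enough absorbs the higher-order tail uniformly on $[0,1]\times(0,\varepsilon]$; this yields $B\le 0$ and hence~\eqref{eq: Theta um leq 0} on the whole parabolic cylinder.

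The main obstacle will be this delicate $O(\tau^4)$ balance. The negative $-8\tau^4$ comes entirely from inflating the leading constant $4$ to $a=4e^\tau$ (i.e.\ from $a-4\ge 4\tau$), while the positive $+9\ell\tau^4/4$ originates in $b(\tau)^2/(4\ell)$ and is driven through $b(\tau)$ by the linear decay $\tau_t=-\ell$ together with the drift $m\rho/\pi$. Keeping the former strictly above the latter is precisely what forces the explicit threshold $\ell<32/9$, and this is the only genuinely quantitative choice in the argument; once $\ell$ is fixed in that range, the remaining smallness of $\varepsilon$ (subject also to $\varepsilon<\log(m/(8\pi))$) is a purely perturbative matter and the time $T^\star:=\varepsilon/\ell$ is then forced.
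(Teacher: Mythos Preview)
Your argument is correct and follows the same overall strategy as the paper: compute $\Theta\underline M$, factor out $\underline M$ (equivalently $a\rho/(\rho+\tau^3)^3$), use $e^\tau\ge 1+\tau$ to make the $(4-a)$ term produce a strictly negative contribution of order $\tau^4$, and reduce to the nonpositivity of a quadratic. The only organizational difference is that you view the residual factor $B(\rho,\tau)$ as a quadratic in $\rho$ and control its maximum via an asymptotic expansion in $\tau$, whereas the paper first bounds $m\tau^3/\pi\le m\varepsilon\tau^2/\pi$ and then views the bracket as a quadratic in $\xi=\tau^2/(\rho+\tau^3)$, obtaining the clean discriminant condition $(3\ell+m\varepsilon/\pi)^2\le 32\ell$. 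The paper's route avoids the Taylor tail estimate and yields explicit admissible choices (e.g.\ $\ell=\varepsilon$ with $\varepsilon$ small), while yours makes the threshold $\ell<32/9$ visible directly; either way the constraints coincide to leading order.
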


\begin{proof}
  Fix $\varepsilon\in(0,1)$ and $\ell\in(0,1)$ complying with 
  \begin{equation}
    \label{eq: varepsilon small}
    \e^\varepsilon < \frac{m}{8\pi}
  \end{equation}
  and 
  \begin{equation}
    \label{eq: Vieta theorem}
    \bigg(3\ell+\frac{m\varepsilon}{\pi}\bigg)^2 - 32\ell\leq0.
  \end{equation}
  One may check that choices
  \begin{equation*}
    \varepsilon 
    := \min\bigg\{\frac{1}{2}, \log \frac{m+8\pi}{16\pi}, 16\cdot\Big(3+\frac{m}{\pi}\Big)^{-2}\bigg\}
    \quad\text{and}\quad 
    \ell := \varepsilon
  \end{equation*}
  satisfy \eqref{eq: varepsilon small} and \eqref{eq: Vieta theorem}.
  Note that \eqref{eq: Vieta theorem} ensures that the quadratic inequality 
  \begin{equation}
    \label{eq: quadratic inequality}
    -8\xi^2 + \Big(3\ell+\frac{m\varepsilon}{\pi}\Big)\xi - \ell \leq0
  \end{equation}
  is valid for all $\xi\in\mathbb{R}$.
It follows from \eqref{eq: varepsilon small} that 
\begin{equation}
  \um(1,t) 
  = \frac{a}{1+\tau^3}
  \leq a 
  \leq 4\e^\varepsilon 
  < \frac{m}{2\pi} 
  \quad \text{for all } t\in(0,\varepsilon/\ell).
\end{equation}
This readily warrants~\eqref{eq: estimates of um at right end}.
Evaluating  
\begin{equation*}
  \um_t 
  = \frac{a'}{a}\um 
  - \frac{3\tau^2 \tau'}{\rho + \tau^3}\um
  = -\ell\um 
  + \frac{3\tau^2 \ell}{\rho + \tau^3}\um
\end{equation*}
and 
\begin{equation*}
  \um_\rho = \frac{a\tau^3}{(\rho+\tau^3)^2}
\end{equation*}
and 
\begin{equation*}
  \rho \um_{\rho\rho} 
  = -\frac{2a\tau^3\rho}{(\rho+\tau^3)^3} 
  = -\frac{2\tau^3}{(\rho+\tau^3)^2}\um,
\end{equation*}
we have
\begin{equation*} 
  \begin{aligned}
    \Theta\um 
    &= \um_t - 4\rho \um_{\rho\rho} - 2\um_\rho \bigg(\um - \frac{m\rho}{2\pi}\bigg)\\
    &= \bigg(\frac{2(4-a)\tau^3}{(\rho+\tau^3)^2} 
    + \frac{3\tau^{2}\ell}{\rho+\tau^3} 
    + \frac{m}{\pi}\cdot\frac{\tau^3}{\rho+\tau^3} 
    - \ell\bigg)\um \\
    &\leq \bigg(-8\cdot \frac{(\e^\tau-1)\tau^3}{(\rho+\tau^3)^2} 
    + 3\ell\cdot\frac{\tau^{2}}{\rho+\tau^3} 
    + \frac{m\varepsilon}{\pi}\cdot\frac{\tau^2}{\rho+\tau^3} - \ell\bigg)\um
  \end{aligned} 
\end{equation*}
for all $(\rho,t)\in(0,1)\times(0,\varepsilon/\ell)$.
Therefore, \eqref{eq: Theta um leq 0} follows from the identity 
\begin{equation}
  \label{eq: a basic inequalities}
  \e^\xi = \sum_{i=0}^\infty \frac{\xi^i}{i!}
  \quad \text{for }\xi\in\mathbb{R},
\end{equation}
and \eqref{eq: quadratic inequality}.
\eqref{eq: um approach to 4} is obvious. 
\end{proof}

Now we take the reflection process of \cite[Lemma~3.2]{Mao2024c}.

\begin{lemma}
  \label{le: function comparison}
  If $f$ belongs to $C^0([0,1])\cap C^1((0,1))$ with $f(0)=0$ 
  and 
   \begin{equation*}
     0 < \inf_{\xi\in(0,1)}f'(\xi) \leq \sup_{\xi\in(0,1)} f'(\xi) < \infty,
   \end{equation*}
  then there exists $\theta > 0$ such that for all $\epsilon > 0$, 
  \begin{equation*}
    \frac{(1+\theta)\xi}{\theta + \xi}\cdot(f(1)+\epsilon) \geq f(\xi) 
    \quad \text{for all } \xi\in(0,1). 
  \end{equation*}   
  Moreover, if $f\in C^0([0,1])$ is nonnegative and increasing with 
  \begin{equation*}
    \sup_{\xi\in(0,1)}\frac{f(\xi)}{\xi} < \infty,
  \end{equation*}
  then for any $\epsilon > 0$, there exists $\theta > 0$ such that 
  \begin{equation*}
    \frac{(1+\theta)\xi}{\theta + \xi}\cdot(f(1)+\epsilon) \geq f(\xi) 
    \quad \text{for all } \xi\in(0,1). 
  \end{equation*}  
\end{lemma}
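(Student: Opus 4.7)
The plan is to view $g_\theta(\xi) := \frac{(1+\theta)\xi}{\theta+\xi}$ as a concave, strictly increasing bijection of $[0,1]$ onto itself and, with the shorthand $F := f(1)+\epsilon$, to cross-multiply and rewrite the desired inequality $g_\theta(\xi)\,F \geq f(\xi)$ as
\[
\xi\bigl(F - f(\xi)\bigr) \;\geq\; \theta\bigl(f(\xi) - \xi F\bigr),
\qquad \xi\in(0,1).
\]
Since $f$ is nondecreasing with $f(\xi) \leq f(1) \leq F$, the left-hand side is always nonnegative, while the right-hand side is positive only on the subset $E_F := \{\xi\in(0,1) : f(\xi) > \xi F\}$. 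Off $E_F$ the inequality holds for every $\theta > 0$, so the entire task reduces to producing $\theta > 0$ with $\theta \leq \inf_{\xi\in E_F}\xi(F - f(\xi))/(f(\xi) - \xi F)$.

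For the second, weaker statement this is immediate. On $E_F$ one has $f(\xi)/\xi > F$, so setting $c := \sup_{\xi\in(0,1)} f(\xi)/\xi < \infty$ either $E_F = \emptyset$ (if $c\leq F$) or $c > F$. Factoring one power of $\xi$ from numerator and denominator,
\[
\frac{\xi(F - f(\xi))}{f(\xi) - \xi F} \;=\; \frac{F - f(\xi)}{f(\xi)/\xi - F} \;\geq\; \frac{F - f(1)}{c - F} \;=\; \frac{\epsilon}{c - f(1) - \epsilon},
\]
so any $\theta$ below this positive threshold works.

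For the first, stronger statement $F = f(1)$, and both $F - f(\xi)$ and $f(\xi) - \xi F$ vanish as $\xi\uparrow1$, so the crude estimate above no longer controls the ratio. The remedy is the two-sided Lipschitz hypothesis: set $c_1 := \inf f'$ and $c_2 := \sup f'$ (if $c_1 = c_2$ then $f$ is linear and $E_F = \emptyset$, so assume $c_1 < c_2$, whence $c_1 < f(1) < c_2$ since $f'$ is continuous and nonconstant). From $c_1(1-\xi) \leq f(1) - f(\xi) \leq c_2(1-\xi)$ and $c_1\xi \leq f(\xi) \leq c_2\xi$ one derives the dual upper bound
\[
f(\xi) - \xi f(1) \;\leq\; \min\Bigl\{\xi\bigl(c_2 - f(1)\bigr),\;(1-\xi)\bigl(f(1) - c_1\bigr)\Bigr\}
\]
together with $f(1) - f(\xi) \geq c_1(1-\xi)$. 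The threshold $\xi^\star := (f(1) - c_1)/(c_2 - c_1) \in (0,1)$ is exactly the point at which the two entries of the minimum balance: on $\{\xi\leq\xi^\star\}$ the first entry is active, the factor $\xi$ cancels, and the ratio is bounded below by $c_1(1-\xi)/(c_2-f(1)) \geq c_1/(c_2-c_1)$ using $1-\xi \geq (c_2-f(1))/(c_2-c_1)$; on $\{\xi\geq\xi^\star\}$ the second entry is active, the factor $1-\xi$ cancels, and the ratio is bounded below by $\xi c_1/(f(1)-c_1) \geq c_1/(c_2-c_1)$. Consequently $\theta := c_1/(c_2 - c_1)$ depends only on the Lipschitz data of $f$ and in particular serves uniformly for every $\epsilon \geq 0$.

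The main delicacy, as signalled above, is the dual degeneracy at $\xi = 1$, where numerator and denominator collapse simultaneously; the two-sided Lipschitz bound is precisely what forces them to vanish at the same linear rate, so that the factors $\xi$ and $1-\xi$ cancel cleanly in the two regimes separated by $\xi^\star$, leaving a constant expressed purely through $c_1$ and $c_2$. The remaining work is routine bookkeeping of the two cases.
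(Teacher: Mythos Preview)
Your proof is correct. The paper itself does not supply a proof of this lemma; it simply records the statement with the remark that it follows the ``reflection process of \cite[Lemma~3.2]{Mao2024c}'' and proceeds to use it. Your argument is self-contained: rewriting the target inequality as $\xi(F-f(\xi))\geq\theta(f(\xi)-\xi F)$, reducing to the set $E_F$ where the right-hand side is positive, and then controlling the ratio via the two-sided Lipschitz bounds with the crossover point $\xi^\star=(f(1)-c_1)/(c_2-c_1)$ yields the explicit choice $\theta=c_1/(c_2-c_1)$, uniform in $\epsilon$. Two cosmetic points: in the degenerate case $c_1=c_2$ you should say explicitly that any $\theta>0$ suffices; and in Part~2 the phrase ``any $\theta$ below this positive threshold'' can be sharpened to ``$\theta=\epsilon/(c-F)$'', since the bound is actually attained.
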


We are in a position to show Proposition~\ref{prop: finite-time blowup in JL system}.

\begin{proof}[Proof of Proposition~\ref{prop: finite-time blowup in JL system}]
  Let the mapping 
  \begin{equation*}
    \um: [0,1]\times[0,\varepsilon/\ell)\mapsto C^{1,0}([0,1]\times[0,\varepsilon/\ell))\cap C^{2,1}((0,1)\times(0,\varepsilon/\ell))
  \end{equation*} 
  be given as in Lemma~\ref{le: subsolution of JL}. 
  Put 
  \begin{equation*}
    \tilde{M}(\rho) := 2\pi\um(\rho,0)\quad \text{for }\rho\in[0,1]
  \end{equation*}
  and 
  \begin{equation*}
    T^\star = \frac{\varepsilon}{\ell}.
  \end{equation*}
  Then \eqref{eq: estimates of um at right end} assures \eqref{eq: boundary conditions}.
  Define the set accordingly
  \begin{equation*}
    \mathcal{S}(m) :=\{u_0: 
    u_0\text{ complies with }\eqref{h: initial data u0}, \eqref{h: initial mass constraint of JL} \text{ and } \eqref{h: initial comparison on u0}.\}. 
  \end{equation*}
  Thanks to Lemma~\ref{le: function comparison}, $\mathcal{S}(m)$ is nonempty. 
  For any $u_0\in \mathcal{S}(m)$, the system~\eqref{sys: JL system} admits a uniquely determined classical solution $(u,v)$ with the maximal existence time $T_{\max} = T_{\max}(u_0)\in(0,\infty]$ from the function class~\eqref{eq: function class of JL},   
  complying with \eqref{eq: extensibility} if $T_{\max} < \infty$,
  according to \cite[Theorem~1.3]{Cieslak2008}.
  We claim 
  \begin{equation*}
  T_{\max} \leq T^\star.  
  \end{equation*}

  Indeed, if $T_{\max} > T^\star$, we have by~\eqref{eq: function class of JL}
  \begin{equation}
  \label{eq: pointwise estimate at t=varepsilon/ell}
    \sup_{\rho\in(0,1)}\rho^{-1}M(\rho,T^\star)
    \leq \frac{1}{2}\cdot \|u(\cdot,T^\star)\|_{L^\infty(\Omega)} 
    < \infty
  \end{equation}
  and 
  \begin{equation*}
    M\geq \um\quad\text{for all }(\rho,t)\in(0,1)\times(0,T^\star),
  \end{equation*} 
  by comparison arguments~\cite[Theorem 4.1]{Winkler2010a}.
  However, \eqref{eq: um approach to 4} implies 
  \begin{equation*}
    M(\rho,T^\star)\geq 4 \quad\text{for all } \rho\in(0,1],
  \end{equation*}
  which is incompatible with the pointwise upper estimate~\eqref{eq: pointwise estimate at t=varepsilon/ell}.
\end{proof}

\section{Finite-time blowup}
\label{sec: finite-time of indirect chemotaxis}

This section is devoted to the proof of Theorem~\ref{thm: finite-time blowup}, based on ideas recorded in Section~\ref{sec: motivations}.

\subsection{Preliminaries}
\label{section preliminary}

Let us begin with local solvability, uniqueness and extensibility of classical solutions to the system \eqref{sys: ks isp pep},
which have been established in~\cite[Section~2]{Tao2025}.

\begin{proposition}
  \label{prop: local existence and uniqueness}
Let $\Omega$ be the unit ball in $\mathbb{R}^4$ and assume~\eqref{h: initial data}. Then there exist $T_{\max } \in(0, \infty]$ and a uniquely determined triplet of radially symmetric functions
\begin{equation}
\begin{cases}
u \in C^0(\bar{\Omega} \times[0, T_{\max })) 
\cap C^{2,1}(\bar{\Omega} \times(0, T_{\max })), \\
v \in C^{2,0}(\bar{\Omega} \times(0, T_{\max })) \quad \text { and } \\
w \in C^0(\bar{\Omega} \times[0, T_{\max })) 
\cap C^{2,1}(\bar{\Omega} \times(0, T_{\max }))
\end{cases}
\end{equation}
with the properties that $u \geq 0$ and $w \geq 0$ in $\Omega \times(0, T_{\max })$, that $\int_{\Omega} v(\cdot, t)=0$ for all $t \in(0, T_{\max })$, that \eqref{sys: ks isp pep} is satisfied in the classical sense in $\Omega \times(0, T_{\max })$, and that
\begin{equation}
  \label{eq: extensibility principle}
\text {if } T_{\max }<\infty, \quad \text {then }\limsup _{t \uparrow T_{\max }}\|u(\cdot, t)\|_{L^{\infty}(\Omega)}=\infty.
\end{equation}
Moreover, 
\begin{equation} 
  \label{eq: mass conservation}
\int_{\Omega} u(\cdot, t) \dd x = \int_\Omega u_0 \dd x 
\quad \text {for all } t \in(0, T_{\max }). 
\end{equation}
\end{proposition}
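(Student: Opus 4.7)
The plan is to construct a local-in-time classical solution by a Banach fixed-point iteration in a class of radially symmetric continuous functions, then bootstrap regularity and glue local solutions up to a maximal time. For $T>0$ small, I would consider the closed ball
\begin{equation*}
  \mathcal{X}_T := \Bigl\{\tilde u \in C^0(\overline{\Omega}\times[0,T]) : \tilde u \text{ radial}, \tilde u \geq 0, \|\tilde u - u_0\|_{L^\infty(\Omega\times(0,T))} \leq 1\Bigr\}
\end{equation*}
and define a map $\Phi: \mathcal{X}_T \to C^0(\overline{\Omega}\times[0,T])$ as follows. Given $\tilde u \in \mathcal{X}_T$, first solve the linear parabolic problem
\begin{equation*}
  \tilde w_t = \Delta \tilde w - \delta \tilde w + \tilde u, \qquad \partial_\nu \tilde w|_{\partial\Omega} = 0, \qquad \tilde w(\cdot,0) = w_0,
\end{equation*}
obtaining $\tilde w$ with $L^p$-type regularity by standard parabolic theory. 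Then set $\tilde\mu(t) := |\Omega|^{-1}\int_\Omega \tilde w\dd x$ and solve the Neumann Poisson problem $-\Delta \tilde v = -\tilde\mu(t)+\tilde w$ with $\int_\Omega \tilde v\dd x = 0$; the constraint on $\tilde\mu$ is precisely the compatibility condition that makes this uniquely solvable, and elliptic regularity gives $\nabla \tilde v$ in suitable Hölder spaces. Finally, solve the linear drift-diffusion problem
\begin{equation*}
  u_t = \Delta u - \nabla\cdot(u\nabla \tilde v), \qquad \partial_\nu u|_{\partial\Omega} = 0, \qquad u(\cdot,0) = u_0,
\end{equation*}
which yields $u \in C^0(\overline{\Omega}\times[0,T])$ by parabolic theory, with $u\geq 0$ by the maximum principle applied to the divergence-form equation. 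Define $\Phi(\tilde u) := u$.

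The next step is to show that, shrinking $T$ if necessary, $\Phi$ maps $\mathcal{X}_T$ into itself and is a strict contraction in the sup norm; this follows from the linear dependence $\tilde u \mapsto \tilde w \mapsto \nabla\tilde v$ in the appropriate parabolic/elliptic norms and from the $W^{1,\infty}$-regularity of $(u_0,w_0)$, which prevents degeneracy at $t=0$. Radial symmetry is preserved by each of the three linear solve steps, so the fixed point lies in $\mathcal{X}_T$. The Banach fixed-point theorem yields existence and uniqueness of $(u,v,w)$ on $[0,T]$. A standard Schauder-type bootstrap, starting from the continuous fixed point and iterating the elliptic/parabolic regularity of the three equations, upgrades $(u,w)$ to $C^{2,1}(\overline{\Omega}\times(0,T))$ and $v$ to $C^{2,0}(\overline{\Omega}\times(0,T))$, giving a classical solution with the asserted regularity.

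Extending to the maximal time $T_{\max}$ is routine by repeated application of the local result starting from $t = T-\eta$. For the extensibility alternative \eqref{eq: extensibility principle}, the plan is to argue by contradiction: if $T_{\max}<\infty$ but $\limsup_{t\uparrow T_{\max}}\|u(\cdot,t)\|_{L^\infty(\Omega)} < \infty$, then the $w$-equation with bounded source $u$ yields $\|w(\cdot,t)\|_{W^{1,\infty}}$ bounded on $(0,T_{\max})$ via heat-semigroup smoothing, hence $\|\nabla v(\cdot,t)\|_{L^\infty}$ bounded by elliptic regularity, and then a further Schauder estimate on the $u$-equation furnishes $C^{2+\alpha,1+\alpha/2}$ bounds up to $T_{\max}$ on $u$ and $w$, allowing the local existence theorem to restart at time close to $T_{\max}$ and contradict maximality. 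The mass identity \eqref{eq: mass conservation} follows by integrating the $u$-equation over $\Omega$ and using the Neumann boundary condition, since $\int_\Omega \nabla\cdot(u\nabla v)\dd x = 0$.

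The main obstacle is calibrating the function spaces so that the contraction estimate for $\Phi$ closes with only $W^{1,\infty}$ initial data while preserving nonnegativity throughout; this is essentially handled by working in $C^0$ for the fixed point and only claiming the higher regularity $C^{2,1}(\overline{\Omega}\times(0,T_{\max}))$ on the open time interval, as reflected in the statement.
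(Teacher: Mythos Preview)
The paper does not give its own proof of this proposition; it simply records the statement and refers to \cite[Section~2]{Tao2025} for the full argument. Your sketch follows exactly the standard template for such local well-posedness results---a Banach fixed point in a $C^0$ ball of radial functions, with the three linear subproblems (heat equation for $w$, Neumann Poisson problem for $v$, drift--diffusion for $u$) solved in sequence, then Schauder bootstrapping for interior regularity and the usual continuation argument for the blow-up alternative---and this is precisely the scheme carried out in the cited reference.

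One minor wording issue: you write ``linear dependence $\tilde u \mapsto \tilde w \mapsto \nabla\tilde v$'' when justifying the contraction. Those two steps are indeed linear, but the composite map $\Phi$ is not, since the final drift--diffusion solve for $u$ has coefficients depending on $\nabla\tilde v$. The contraction estimate therefore compares two solutions of linear parabolic equations with \emph{different} drift fields and uses smallness of $T$ to absorb the difference; this is routine but not literally a linearity argument. Apart from that, the plan is sound and matches what the cited paper does.
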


Following the precedents~\cite{Jaeger1992} and recent development~\cite{Tao2025} due to Tao and Winkler, we shall analyze a cooperative parabolic system satisfied by cumulated densities to detect radial blowup in the J\"ager-Luckhaus type chemotaxis model~\eqref{sys: ks isp pep}.

\begin{lemma} 
  \label{le: cumulated densities}
The mass distribution substitution 
\begin{equation}
  \label{sym: mathcal L}
  \mathcal{L}f(s) := \frac{1}{2\pi^2}\int_{B_{s^{1/4}}}f\dd x 
  = \int_0^{s^{1/4}}fr^3\dd r 
  \quad \text{for } f\in C^0(\Omega) \text{ and } s\in(0,1),
\end{equation}
coverts the triplet $(u,v,w)$ of radial functions from Proposition~\ref{prop: local existence and uniqueness} to $(U,W) := (\mathcal{L}u,\mathcal{L}w)$ for $(s,t)\in[0,1]\times[0,T_{\max})$. 
The function couple $(U,W)$ satisfies the cooperative system 
\begin{equation}
\label{sys: mathcal P Q}
    \mathcal{P}(U,W) = 0 
    \quad\text{and}\quad  
    \mathcal{Q}(U,W) = 0
    \quad\text{for all } (s,t)\in(0,1)\times(0,T_{\max})
\end{equation}
with boundary conditions 
\begin{equation} 
\label{sys: left boundary conditions}
    U(0,t) = W(0,t) = 0
    \quad \text{for all } t\in(0,T_{\max})
\end{equation}
and 
\begin{equation}
\label{sys: right boundary conditions}
  U(1,t) = \frac{m}{2\pi^2}
  \quad\text{and}\quad 
  W(1,t) = W_0(1)\e^{-\delta t} + \frac{m\delta^{-1}}{2\pi^2}(1-\e^{-\delta t})
\end{equation}
for all $t\in(0,T_{\max})$,
where the initial datum is given by 
\begin{equation} 
  \label{sym: U_0 W_0}
(U_0,W_0) := (\mathcal{L}u_0,\mathcal{L}w_0)
\quad \text{for } s\in(0,1).
\end{equation}
Here and below, for $T>0$ and functions $\phi$ and $\psi$ from $C^1([0,1]\times[0,T))$ which satisfy $\phi_s\geq0$ on $(0,1)\times(0,T)$ and are such that $\phi(\cdot,t)\in W^{2,\infty}_{\loc}((0,1))$ and $\psi(\cdot,t)\in W^{2,\infty}_{\loc}((0,1))$ for all $t\in(0,T)$, define 
\begin{equation}
  \mathcal{P}(\phi,\psi) 
  = \phi_t - 16s^{3/2} \phi_{ss} - 4\phi_s\bigg(\psi - \frac{\mu_\ast s}{4}\bigg)
\end{equation}
with 
\begin{equation}
  \label{sym: mu ast}
  \mu_\ast(t) %
  = 4\psi(1,t)
  \quad\text{for } t \in (0,T_{\max}),
\end{equation}
and 
\begin{equation}
  \label{sym: mathcal Q}
  \mathcal{Q}(\phi,\psi) = \psi_t - 16s^{3/2}\psi_{ss} + \delta\psi - \phi
\end{equation}
as well as 
\begin{equation}
  \label{sym: mathcal P mu ast}
  \mP(\phi,\psi) 
  = \phi_t - 16s^{3/2} \phi_{ss} - 4\phi_s\bigg(\psi - \frac{\mu^\star s}{4}\bigg)
\end{equation}
for $t\in(0,T)$ and a.e. $s\in(0,1)$, 
where $\mu^\ast > 0$
is a constant. 
\end{lemma}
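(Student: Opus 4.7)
The plan is to convert the radial PDE system~\eqref{sys: ks isp pep} for $(u,v,w)$ into the cooperative system for $(U,W)=(\mathcal Lu,\mathcal Lw)$ by direct integration over the concentric balls $B_{s^{1/4}}$, using nothing more than the divergence theorem, the chain rule for $\rho=s^{1/4}$, and the fact that in $\mathbb R^4$ the volume of the unit ball equals $\pi^2/2$ while $|\partial B_\rho|=2\pi^2\rho^{3}$. The reason the exponent $1/4$ is chosen in $\mathcal L$ is precisely to absorb the surface weight $r^{3}$ into a single factor of $s^{3/4}$ that later rescales diffusion to the canonical form $16s^{3/2}\phi_{ss}$.

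First I would compute the elementary derivative identities implied by~\eqref{sym: mathcal L}: differentiating $U(s,t)=\int_0^{s^{1/4}}u(r,t)r^{3}\dd r$ yields
\begin{equation*}
  U_s(s,t)=\tfrac14 u(s^{1/4},t),
  \qquad
  U_{ss}(s,t)=\tfrac{1}{16s^{3/4}}\,u_r(s^{1/4},t),
\end{equation*}
and analogously for $W$. Next I would integrate the first equation of~\eqref{sys: ks isp pep} over $B_{s^{1/4}}$ and apply the divergence theorem, obtaining
\begin{equation*}
  2\pi^{2}U_t
  =2\pi^{2}s^{3/4}\,u_r(s^{1/4},t)-2\pi^{2}s^{3/4}u(s^{1/4},t)\,v_r(s^{1/4},t).
\end{equation*}
To eliminate $v_r$, I integrate the elliptic equation $0=\Delta v-\mu(t)+w$ over $B_{s^{1/4}}$: since $|B_{s^{1/4}}|=\tfrac{\pi^{2}}{2}s$ and, using~\eqref{sym: mu} with $|\Omega|=\pi^{2}/2$, one has $\mu(t)=4W(1,t)=\mu_\ast(t)$, this rearranges to
\begin{equation*}
  s^{3/4}v_r(s^{1/4},t)=\frac{\mu_\ast(t)\,s}{4}-W(s,t).
\end{equation*}
Substituting this together with the derivative identities into the previous display and cancelling $2\pi^{2}$ produces exactly $\mathcal P(U,W)=0$.

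For the $W$-equation, the analogous integration of the third equation of~\eqref{sys: ks isp pep} over $B_{s^{1/4}}$ yields $W_t=s^{3/4}w_r(s^{1/4},t)-\delta W+U$, and the identity $w_r(s^{1/4},t)=16s^{3/4}W_{ss}$ converts this at once into $\mathcal Q(U,W)=0$. The left boundary conditions~\eqref{sys: left boundary conditions} hold by construction, and $U(1,t)=m/(2\pi^2)$ is just a reformulation of mass conservation~\eqref{eq: mass conservation}. For $W(1,t)$ I would integrate the $w$-equation over $\Omega$ with Neumann data, producing the ODE $\tfrac{d}{dt}W(1,t)=-\delta W(1,t)+m/(2\pi^{2})$, whose explicit solution with datum $W_0(1)$ is precisely~\eqref{sys: right boundary conditions}.

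I do not anticipate any serious obstacle: every step is a direct and standard transformation, and the regularity stated in Proposition~\ref{prop: local existence and uniqueness} is more than sufficient to justify the differentiation under the integral sign and the applications of the divergence theorem on $B_{s^{1/4}}$ for $s\in(0,1)$. The only point requiring mild care is the behaviour at $s=0$, which follows from $U(s,t)=O(s)$ as $s\downarrow 0$ by boundedness of $u(\cdot,t)$ near the origin; and the identification of $\mu(t)$ with $\mu_\ast(t)$ as defined in~\eqref{sym: mu ast}, which is the one algebraic compatibility that must be recorded explicitly because it ties the nonlocal term in~\eqref{sys: ks isp pep} to the right-boundary value of $W$.
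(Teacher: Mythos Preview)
Your proposal is correct and complete. The paper does not supply an explicit proof of this lemma; it states the result as a direct consequence of the mass distribution substitution following the precedents~\cite{Jaeger1992,Tao2025}, and your computation is precisely the standard verification one performs to obtain such a reduction. Every step you outline---the derivative identities $U_s=\tfrac14 u(s^{1/4},t)$ and $U_{ss}=\tfrac{1}{16}s^{-3/4}u_r(s^{1/4},t)$, the integration of the elliptic equation to express $s^{3/4}v_r$ in terms of $W$ and $\mu_\ast$, the identification $\mu(t)=4W(1,t)=\mu_\ast(t)$, and the ODE for $W(1,t)$---checks out and matches what the cited literature does in the analogous settings.
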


As a crucial aspect of this substitution~\eqref{sym: mathcal L}, the cooperative system $\mP(\phi,\psi) = 0$ and $\mathcal{Q}(\phi,\psi) = 0$ admits comparison arguments~\cite[Lemma~3.2]{Tao2025}.

\begin{lemma}
  \label{le: comparison}
Let $\mu^{\star}>0$, and suppose that for some $T>0$, 
the functions $\uu, \ou, \uw$ and $\ow$ belong to $C^1([0, 1]\times[0, T))$ 
and have the properties that 
$\{\uu(\cdot, t), \ou(\cdot, t), \uw(\cdot, t), \ow(\cdot, t)\} \subset W_{\loc}^{2, \infty}((0, 1))$ for all $t \in(0, T)$, 
and that $\uu_s$ and $\ou_s$ are nonnegative on $(0, 1) \times(0, T)$. 
If with $\mP$ and $\mathcal{Q}$ as in \eqref{sym: mathcal P mu ast} and \eqref{sym: mathcal Q} one has 
\begin{equation} 
  \label{eq: mP uu uw leq mP ou ow}
\mP(\uu, \uw)(s, t) 
\leq 0 \leq 
\mP(\ou, \ow)(s, t)
\quad \text{for all } t \in(0, T) \text{ and a.e. } s \in(0, 1) 
\end{equation} 
as well as
\begin{equation} 
  \label{eq: mQ uu uw leq mQ ou ow}
  \mathcal{Q}(\uu, \uw)(s, t) \leq 0 \leq \mathcal{Q}(\ou,\ow)(s, t) 
  \quad \text{for all } t \in(0, T) \text{ and a.e. } s \in(0, 1),
\end{equation} 
and if furthermore
\begin{equation*}
  \uu(0, t) \leq \ou(0, t), \quad \uu(1, t) \leq \ou(1, t), \quad \uw(0, t) \leq \ow(0, t) \quad \text {and} \quad \uw(1, t) \leq \ow(1, t)
\end{equation*}  
for all $t \in[0, T)$,
as well as  
  \begin{equation*}
  \uu(s, 0) \leq \ou(s, 0) \quad \text {and} \quad \uw(s, 0) \leq \ow(s, 0) 
  \quad \text {for all } s \in[0, 1],
  \end{equation*}  
then  
  \begin{equation*}
  \uu(s, t) \leq \ou(s, t) 
  \quad \text {for all } s \in[0, 1] \text { and } t \in[0, T) .
  \end{equation*}
\end{lemma}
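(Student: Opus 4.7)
The plan is to convert the two cooperative differential inequalities into a weakly coupled linear system for the differences $\Phi := \ou - \uu$ and $\Psi := \ow - \uw$, and then to apply a classical first-touch argument with an exponential perturbation.

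First I would subtract. Using the identity $\ou_s\ow - \uu_s\uw = \ou_s(\ow-\uw) + \uw(\ou_s - \uu_s)$, the inequality $\mP(\ou,\ow) \geq \mP(\uu,\uw)$ from \eqref{eq: mP uu uw leq mP ou ow} rewrites as $\Phi_t - 16 s^{3/2}\Phi_{ss} - (4\uw - \mu^\star s)\Phi_s - 4\ou_s \Psi \geq 0$ for $t\in(0,T)$ and a.e. $s\in(0,1)$, while \eqref{eq: mQ uu uw leq mQ ou ow} becomes $\Psi_t - 16 s^{3/2}\Psi_{ss} + \delta\Psi - \Phi \geq 0$. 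The structural observation is that the cross-coupling coefficients $-4\ou_s$ and $-1$ are both nonpositive (the first by the hypothesis $\ou_s\geq 0$), so the linearized system is cooperative.

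Next I would fix $T_0\in(0,T)$, note that $\ou_s$ is bounded on $[0,1]\times[0,T_0]$ thanks to the $C^1$ assumption, and pick $K > 4\sup \ou_s + 1$. Setting $\tilde\Phi := \Phi + \epsilon e^{Kt}$ and $\tilde\Psi := \Psi + \epsilon e^{Kt}$ for $\epsilon>0$, the two displayed inequalities turn into strict ones with right-hand sides $\epsilon e^{Kt}(K-4\ou_s)>0$ and $\epsilon e^{Kt}(K+\delta-1)>0$, respectively; moreover the ordering at the initial and lateral boundaries gives $\tilde\Phi, \tilde\Psi > 0$ there. Assume for contradiction that either $\tilde\Phi$ or $\tilde\Psi$ first attains zero at some time $t_0\in(0,T_0]$. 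Boundary positivity forces the touching point $s_0$ to lie in the open interval $(0,1)$, and continuity singles out one of two cases. In each case the interior-minimum conditions $\tilde\Phi_s(s_0,t_0)=0$, $\tilde\Phi_{ss}(s_0,t_0)\geq 0$, $\tilde\Phi_t(s_0,t_0)\leq 0$ (or the analogue for $\tilde\Psi$), together with the nonnegativity of the other component there, render the corresponding left-hand side nonpositive, contradicting the strict positivity produced by the perturbation. Sending $\epsilon\downarrow 0$ and then $T_0\uparrow T$ yields $\Phi, \Psi \geq 0$ on $[0,1]\times[0,T)$, which contains the claimed inequality $\uu \leq \ou$.

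The main obstacle is the mismatch between the ``a.e.\ in $s$'' formulation of the differential inequalities and the pointwise nature of the first-touch argument at $(s_0,t_0)$. The standard remedy is to exploit $\Phi,\Psi \in C^1([0,1]\times[0,T_0))$ together with $L^\infty_{\loc}$ second spatial derivatives, either by locally comparing with a smooth quadratic barrier near $(s_0,t_0)$ or by mollifying in $s$ before taking limits. The auxiliary degeneracy of the coefficient $16 s^{3/2}$ at $s=0$ is harmless here because the left endpoint is already handled by the ordering hypothesis rather than by the second-order operator.
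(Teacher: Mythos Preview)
The paper does not supply its own proof of this lemma; it simply records the statement and cites \cite[Lemma~3.2]{Tao2025}. Your argument---linearize by subtraction using $\ou_s\ow-\uu_s\uw=\ou_s\Psi+\uw\Phi_s$ to expose the cooperative structure, perturb by $\epsilon e^{Kt}$ with $K$ large enough to dominate the off-diagonal couplings, and run a first-touch contradiction---is the standard proof of comparison principles for weakly coupled cooperative parabolic systems and is correct. Your identification of the one genuine technical point (reconciling the a.e.\ differential inequalities with the pointwise interior-minimum conditions under only $W^{2,\infty}_{\loc}$ spatial regularity) and your suggested remedies are likewise the standard ones; this is handled in the cited reference in the same spirit.
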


Naturally, we call a function pair $(\uu,\uw)$ the subsolution, 
if both \eqref{eq: mP uu uw leq mP ou ow} and \eqref{eq: mQ uu uw leq mQ ou ow} hold.

\subsection{Stationary solutions}

As the special solution~\eqref{sym: V_0} to the stationary problem~\eqref{sys: V_0} is of importance to construct blowup solutions for classical chemotaxis models, 
we shall recall the steady states in indirect chemotaxis \cite[Proposition~9.1]{Fujie2017}.
The system 
\begin{align*}
  \begin{cases}
    0=\e^Z - X & \text { in } \mathbb{R}^4, \\ 
    0=\Delta Z + Y & \text { in } \mathbb{R}^4, \\ 
    0=\Delta Y + X & \text { in } \mathbb{R}^4,
  \end{cases}
\end{align*}
has a family of radially symmetric solutions 
\begin{equation*}
  (X_\lambda, Z_\lambda, Y_\lambda) 
  := (\lambda^{4}X_0(\lambda x), 
  \log X_\lambda, 
  \lambda^{2}Y_0(\lambda x))
  \quad\text{for } \lambda > 0, 
\end{equation*}
where 
\begin{equation}
  X_0(x) := \frac{2^7\cdot3}{(1+|x|^2)^4}
  \quad\text{and}\quad 
  Y_0(x) := \frac{16(2+|x|^2)}{(1+|x|^2)^2},
  \quad x\in\mathbb{R}^4.
\end{equation}
We calculate the mass distribution transformation acting on $X_0$ and $Y_0$ accordingly 
\begin{equation*}
  \begin{aligned}
    \mathcal{L}X_0 
    &= \int_0^{s^{1/4}}\frac{2^7\cdot3r^3\dd r}{(1+r^2)^4} 
    = \int_0^{s^{1/2}}\frac{2^6\cdot3\xi\dd\xi}{(1+\xi)^4}\\
    &= - \int_0^{s^{1/2}} 2^6\xi\dd(1+\xi)^{-3} 
    = - \frac{2^6s^{1/2}}{(1+s^{1/2})^3} 
    + \int_0^{s^{1/2}}\frac{2^6\dd\xi}{(1+\xi)^3}\\
    &= - \frac{2^6s^{1/2}}{(1+s^{1/2})^3} 
    + 2^5 - \frac{2^5}{(1+s^{1/2})^2}
    = \frac{2^5(s^{3/2} + 3s)}{(1+s^{1/2})^3}
  \end{aligned}
\end{equation*}
and 
\begin{equation*} 
\begin{aligned}
  \mathcal{L}Y_0 
  &= 16\int_0^{s^{1/4}}\frac{(2+r^2)r^3\dd r}{(1+r^2)^2} 
  = 8\int_0^{s^{1/2}}\frac{(2+\xi)\xi}{(1+\xi)^2}\dd \xi \\
  &= 8\int_0^{s^{1/2}}\dd\bigg(\frac{\xi^2}{1+\xi}\bigg) 
  = \frac{8s}{1+s^{1/2}}
\end{aligned} 
\end{equation*}
for all $s\geq0$.
Then
\begin{equation}  
  \label{sym: mathcal ulambda}
\mathcal{L}X_\lambda 
= \mathcal{L}X_0(\lambda^{4} s) 
= \frac{2^5(s^{3/2}+3\lambda^{-2}s)}{(s^{1/2} + \lambda^{-2})^3}
\quad\text{for } s\geq0 \text{ and } \lambda > 0,
\end{equation} 
and 
\begin{equation} 
  \label{sym: mathcal wlambda}
\mathcal{L}Y_\lambda 
= \lambda^{-2}\mathcal{L}Y_0(\lambda^{4} s)
= \frac{8s}{s^{1/2}+\lambda^{-2}}
\quad\text{for } s\geq0 \text{ and } \lambda > 0.
\end{equation}
One may check that for each $\lambda>0$, $(\mathcal{L}X_\lambda, \mathcal{L}Y_\lambda)$ solves 
\begin{equation*}
  \begin{cases}
    0 = 16s^{3/2}\Phi_{ss} + 4\Phi_s\Psi, & s > 0,\\
    0 = 16s^{3/2}\Psi_{ss} + \Phi, & s > 0.
  \end{cases}
\end{equation*}
We shall construct a family of function pairs $(\uphi,\upsi)$ by perturbing the special functions \eqref{sym: mathcal ulambda} and \eqref{sym: mathcal wlambda} accordingly 
\begin{equation*}
  \uphi(s,t) = \frac{a(s^{3/2}+ 3\lambda^2s)}{(s^{1/2} + \lambda^2)^3}
  \quad \text{for } s\in[0,1] \text{ and } 0<\lambda\ll1
\end{equation*}
and 
\begin{equation*}
  \upsi(s,t) = \frac{bs}{s^{1/2}+\lambda^2} 
  \quad \text{for } s\in[0,1] \text{ and } 0<\lambda\ll1,
\end{equation*}
where $a$, $b$ and $\lambda$ are time-dependent nonnegative functions to be specified below such that $\mP(\uphi,\upsi) \leq0$ and $\mathcal{Q}(\uphi,\upsi) \leq 0$ for  $s\in[0,1]$ and $0<\lambda\ll1$.

\subsection{Construction of exploding subsolutions} 

Before proceeding to construct subsolutions with finite-time gradient blowup,
we shall prepare some parameters.

\begin{lemma}
  \label{le: parameters}
Let $\delta > 0$.
  For any $m>64\pi^2$, $\kappa > 0$ and $\mu^\star > 0$, one can find $\varepsilon\in(0,1)$, $\ell > 0$, $\gamma > 0$ with properties that 
\begin{equation}
  \label{eq: ell geq delta}
  \ell \geq \delta,
\end{equation}
that 
\begin{equation}
\label{eq: a quadratic inequality}
  - 2^5\xi^2 
  + (\mu^\star\varepsilon + 6\ell) \xi 
  - (\gamma+1)\ell 
  \leq 0 
  \quad\text{for all } \xi\in\mathbb{R},
\end{equation}
that  
\begin{equation}
  \label{eq: gamma2 geq 3ell}
  2\gamma^2-3\ell \geq 0,
\end{equation}
that 
\begin{equation}
  \label{eq: e^varepsilon < something kappa}
  \e^{\varepsilon} < \frac{1+2\kappa}{1+\kappa}
\end{equation}
that
\begin{equation}
\label{eq: small varepsilon}
  \e^{(\gamma+1)\varepsilon}(1+3\varepsilon^3) < \frac{m}{64\pi^2},
\end{equation}
and that 
\begin{equation}
  \label{eq: small varepsilon again}
  g(t) := \e^{(\ell-\delta)t}\big(1+(\varepsilon-\ell t)^3\big) \geq 1 + \varepsilon^3
  \quad\text{for all } t\in(0,\varepsilon/\ell).
\end{equation}
\end{lemma}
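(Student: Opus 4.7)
The six constraints admit a natural hierarchy: \eqref{eq: ell geq delta} and \eqref{eq: small varepsilon again} pin down $\ell$ in terms of $\delta$; \eqref{eq: a quadratic inequality} and \eqref{eq: gamma2 geq 3ell} pin down $\gamma$ in terms of $\ell$ and $\mu^\star$; and the remaining conditions \eqref{eq: e^varepsilon < something kappa} and \eqref{eq: small varepsilon} merely impose upper bounds on $\varepsilon$ once the other parameters are fixed. The plan is therefore to choose $\ell$ first, then $\gamma$ (both independently of $\varepsilon$), and finally to pick $\varepsilon$ sufficiently small.

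The step I expect to require the most care is \eqref{eq: small varepsilon again}, since the function $g$ combines exponential growth in $t$ with a cubic factor that vanishes at $t = \varepsilon/\ell$. Because $g(0) = 1 + \varepsilon^3$, it suffices to arrange $g' \geq 0$ on $(0, \varepsilon/\ell)$. Differentiating yields
\[
g'(t) = \e^{(\ell-\delta)t}\bigl[(\ell - \delta)\bigl(1 + (\varepsilon - \ell t)^3\bigr) - 3\ell(\varepsilon - \ell t)^2\bigr],
\]
so after the substitution $r = \varepsilon - \ell t \in [0, \varepsilon]$ it remains to verify $(\ell - \delta)(1 + r^3) \geq 3\ell r^2$ on $[0, \varepsilon]$. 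With the concrete choice $\ell := 2\delta$ (which also secures \eqref{eq: ell geq delta}) this reduces to the elementary inequality $1 + r^3 \geq 6 r^2$, which holds on $[0, \varepsilon]$ whenever $\varepsilon$ is sufficiently small (for example $\varepsilon \leq 1/\sqrt{6}$ suffices).

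For \eqref{eq: a quadratic inequality}, I would use that non-positivity of the downward-opening quadratic in $\xi$ is equivalent to $(\mu^\star \varepsilon + 6\ell)^2 \leq 128(\gamma + 1)\ell$. Under the restriction $\varepsilon \leq 1$, a stronger $\varepsilon$-free sufficient condition is $\gamma + 1 \geq (\mu^\star + 6\ell)^2/(128\ell)$. Combining this with \eqref{eq: gamma2 geq 3ell}, I would then set
\[
\gamma := \max\Bigl\{\sqrt{3\ell/2},\; (\mu^\star + 6\ell)^2/(128\ell) - 1,\; 1\Bigr\},
\]
which simultaneously secures both conditions and is strictly positive.

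Finally, with $\ell$ and $\gamma$ now fixed, condition \eqref{eq: e^varepsilon < something kappa} asks only $\varepsilon < \log\frac{1+2\kappa}{1+\kappa}$, a strictly positive threshold since $\kappa > 0$. In \eqref{eq: small varepsilon}, the hypothesis $m > 64\pi^2$ combined with the fact that $\e^{(\gamma+1)\varepsilon}(1 + 3\varepsilon^3) \to 1$ as $\varepsilon \downarrow 0$ yields a further upper threshold on $\varepsilon$. Taking $\varepsilon \in (0, 1)$ below all of the resulting thresholds (from the three steps above) completes the verification.
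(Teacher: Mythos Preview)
Your proposal is correct and follows essentially the same approach as the paper: both fix $\ell = 2\delta$, choose $\gamma$ as a maximum of $\sqrt{3\ell/2}$ and a quantity controlling the discriminant of the quadratic in \eqref{eq: a quadratic inequality}, and then take $\varepsilon$ small enough to satisfy the remaining constraints, verifying \eqref{eq: small varepsilon again} via $g'\geq 0$ reduced to $1+r^3\geq 6r^2$ on $[0,\varepsilon]$. The only differences are cosmetic (the paper uses the threshold $\varepsilon\leq 1/3$ rather than $1/\sqrt{6}$, and omits the ``$-1$'' in the second branch of the $\gamma$-maximum).
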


\begin{proof}
  Put
  \begin{equation}  
    \label{sym: ell}
  \ell = 2\delta
  \end{equation} 
  and 
  \begin{equation}
    \label{sym: gamma}
    \gamma = \max\bigg\{
      3^{1/2}\delta^{1/2}, \frac{(\mu^\star + 12\delta)^2}{2^{8}\delta}
    \bigg\}.
  \end{equation}
  Thanks to $m>64\pi^2$, the transcendental equation 
    \begin{equation*}
        \e^{(\gamma+1)\xi}(1+3\xi^3) = \frac{m}{64\pi^2} 
        \quad\text{for } \xi \in \mathbb R
  \end{equation*}
  has a unique positive root denoted by $\varepsilon_0$.
  Put 
  \begin{equation}
    \label{sym: varepsilon}
    \varepsilon 
    = \min\bigg\{\frac{1}{3}, 
    \frac{\varepsilon_0}2,
    \frac{1}{2}\ln\frac{1+2\kappa}{1+\kappa}
    \bigg\}.
  \end{equation}
  Then our choices \eqref{sym: ell}, \eqref{sym: gamma} and \eqref{sym: varepsilon} clearly satisfy properties~\eqref{eq: ell geq delta}--\eqref{eq: small varepsilon}. 
  Since 
  \begin{equation*}
    \begin{aligned} 
    g' &= (\ell-\delta)\e^{(\ell-\delta)t}\big(1+(\varepsilon-\ell t)^3\big) 
    - 3\ell\e^{(\ell-\delta)t}(\varepsilon-\ell t)^2 \\
    & = \delta \e^{\delta t}\big(1+(\varepsilon-\ell t)^3 - 6(\varepsilon-\ell t)^2\big) 
    \geq \delta \e^{\delta t}(1-2/3)
    \geq0
    \end{aligned}
  \end{equation*}
  for all $t\in(0,\varepsilon/\ell)$ due to \eqref{sym: ell} and \eqref{sym: varepsilon}, 
  we have $g(t)\geq g(0) = 1 + \varepsilon^3$ and thus get \eqref{eq: small varepsilon again}.
\end{proof}

We may give the family of subsolutions.

\begin{definition}
  \label{def: subsolution}
  Let $\delta>0$. For any $m>64\pi^2$, $\kappa > 0$ and $\mu^\star > 0$,
  take $\varepsilon$, $\ell$ and $\gamma$ from Lemma~\ref{le: parameters}. 
  Put  
\begin{equation}
  \tau(t) = \varepsilon-\ell t,
  \quad t\in(0,\varepsilon/\ell)
\end{equation}
and 
\begin{equation}
  a(t) = 2^5\e^{(\gamma+1)\tau},
  \quad t\in(0,\varepsilon/\ell),
\end{equation}
as well as 
\begin{equation}
  b(t) = 8\e^\tau,
  \quad t\in(0,\varepsilon/\ell).
\end{equation}
  Then one obtains elements $\uu$ and $\uw$ of $C^{2,1}([0,1]\times[0,\varepsilon/\ell))$ defined by 
  \begin{equation}
    \label{sym: uu}
    \uu(s,t) := \frac{a(s^{3/2}+ 3\tau^3s)}{(s^{1/2} + \tau^3)^3}
    \quad \text{for } (s,t)\in[0,1]\times[0,\varepsilon/\ell)
  \end{equation}
  and 
  \begin{equation}
    \label{sym: uw}
    \uw(s,t) := \frac{bs}{s^{1/2} + \tau^3}
    \quad \text{for } (s,t)\in[0,1]\times[0,\varepsilon/\ell).
  \end{equation}
\end{definition}

We check that $\uw$ is one component of the candidate for subsolutions.

\begin{lemma}
  \label{le: Q(uu,uw) leq 0}
  The function pair $(\uu,\uw)$ from Definition~\ref{def: subsolution} has the properties that for each $t\in[0,\varepsilon/\ell)$, 
  the mapping $\uw(s,\cdot): [0,1]\mapsto \mathbb{R}$ is nonnegative, increasing and concave, that 
  \begin{equation}
  \label{eq: uw(0,t)=0}
    \uw(0,t) = 0 \quad\text{for all } t\in[0,\varepsilon/\ell),
  \end{equation}
  that 
\begin{equation}
\label{eq: estimate of uw at right end}
  \sup_{t\in(0,\varepsilon/\ell)}\uw(1,t) < 8\cdot\frac{1+2\kappa}{1+\kappa},
\end{equation}
and that
\begin{equation}
  \label{eq: Q(uu,uw) leq 0}
  \mathcal{Q}(\uu,\uw)\leq0\quad \text{for all } (s,t)\in(0,1)\times(0,\varepsilon/\ell).
\end{equation}
\end{lemma}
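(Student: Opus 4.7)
The plan is to verify each of the four assertions by direct computation from the ansatz, using the parameter bookkeeping of Lemma~\ref{le: parameters} and Definition~\ref{def: subsolution}. Writing $\alpha := \tau^3$ and $\beta := s^{1/2}$ throughout, the ansatz reads $\uw = b\beta^2/(\beta+\alpha)$; recall $a=32\e^{(\gamma+1)\tau}$ and $b=8\e^\tau$, so that $4b/a = \e^{-\gamma\tau}$, while $\ell\geq\delta$, $2\gamma^2\geq 3\ell$, and $\e^\varepsilon<(1+2\kappa)/(1+\kappa)$ by Lemma~\ref{le: parameters}.

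A direct differentiation yields $\uw_s = b(\tfrac12\beta+\alpha)/(\beta+\alpha)^2>0$, and the factorization $s+4\alpha s^{1/2}+3\alpha^2 = (s^{1/2}+\alpha)(s^{1/2}+3\alpha)$ gives both $\uw_{ss}<0$ and the key identity
\[
-16 s^{3/2}\,\uw_{ss} = \frac{4bs(s^{1/2}+3\alpha)}{(s^{1/2}+\alpha)^3} = \frac{4b}{a}\,\uu,
\]
which ties the spatial diffusion term directly to $\uu$. The claims $\uw(0,t)=0$ and $\uw(1,t)=b/(1+\tau^3)\leq b=8\e^\tau\leq 8\e^\varepsilon<8(1+2\kappa)/(1+\kappa)$ are then immediate from the closed form and \eqref{eq: e^varepsilon < something kappa}.

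The substantive step is the differential inequality \eqref{eq: Q(uu,uw) leq 0}. From $b_t=-\ell b$ and $\alpha_t=-3\ell\tau^2$ I compute $\uw_t = \bigl(-\ell+\tfrac{3\ell\tau^2}{\beta+\alpha}\bigr)\uw$, so combining with the identity above and $4b/a=\e^{-\gamma\tau}$,
\[
\mathcal{Q}(\uu,\uw) = (\delta-\ell)\,\uw + \frac{3\ell\tau^2}{\beta+\alpha}\,\uw - (1-\e^{-\gamma\tau})\,\uu.
\]
Since $\ell\geq\delta$ by \eqref{eq: ell geq delta}, the first term is nonpositive, and it suffices to dominate the middle term by the last. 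Plugging in the explicit forms of $\uu$ and $\uw$ and using $\beta+\alpha\leq\beta+3\alpha$, this reduces to the purely scalar inequality $\e^{\gamma\tau}-1\geq 3\ell\tau^2/4$ on $\tau\in(0,\varepsilon)$, which follows from $\e^x\geq 1+x+x^2/2$ together with $2\gamma^2\geq 3\ell$ from \eqref{eq: gamma2 geq 3ell}.

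I expect the main obstacle to be not any single calculation but rather the careful sign bookkeeping: the temporal decay of $\tau$ produces a positive contribution in $\uw_t$ that threatens the inequality, and it is precisely the extra factor $\e^{(\gamma+1)\tau}$ built into $a$ (with $\gamma$ tuned so that $2\gamma^2\geq 3\ell$) that makes the compensating term $-(1-\e^{-\gamma\tau})\uu$ large enough to absorb it. This compensation is the critical-dimension analogue of the mechanism already used in the proof of Lemma~\ref{le: subsolution of JL} for the two-dimensional J\"ager--Luckhaus system.
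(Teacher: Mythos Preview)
Your argument is correct and follows essentially the same route as the paper. The only organizational difference is that you exploit the clean identity $-16s^{3/2}\uw_{ss}=(4b/a)\,\uu$ to keep $\uu$ explicit in the expression for $\mathcal{Q}(\uu,\uw)$, whereas the paper immediately substitutes $\uu=\tfrac{a}{b}\cdot\tfrac{s^{1/2}+3\tau^3}{(s^{1/2}+\tau^3)^2}\,\uw$ and works entirely in terms of $\uw$; both reductions lead to the same scalar inequality $4(\e^{\gamma\tau}-1)\geq 3\ell\tau^2$ and hence to the parameter condition $2\gamma^2\geq 3\ell$ from \eqref{eq: gamma2 geq 3ell}.
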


\begin{proof}
\eqref{eq: uw(0,t)=0} is obvious. 
Since
\begin{equation}
\label{eq: estimate of uw(1,t)}
  \uw(1,t)
  = \frac{b}{1 + \tau^3}
  \leq b
  \leq 8\e^\varepsilon
  \quad \text{for all }t\in(0,\varepsilon/\ell),
\end{equation}
\eqref{eq: estimate of uw at right end} follows from \eqref{eq: e^varepsilon < something kappa}.
  We compute 
\begin{equation*}
  \uw_s 
  = \frac{b}{s^{1/2} + \tau^3} - \frac{1}{2}\cdot\frac{bs^{1/2}}{(s^{1/2} + \tau^3)^2}
  = \frac{b}{(s^{1/2} + \tau^3)^2}\bigg(\tau^3 + \frac{s^{1/2}}{2}\bigg)
\end{equation*}
and 
\begin{equation*}
  \uw_{ss} 
  = \frac{b}{4}\cdot\frac{s^{-1/2}}{(s^{1/2} + \tau^3)^2}
   - \frac{bs^{-1/2}}{(s^{1/2} + \tau^3)^3} 
   \cdot \bigg(\tau^3 + \frac{s^{1/2}}{2}\bigg)\\
  = -\frac{b}{4}\cdot\frac{1+3\tau^3s^{-1/2}}{(s^{1/2} + \tau^3)^3}
\end{equation*}
and 
\begin{equation*}
  \uw_t 
  = \frac{b's}{s^{1/2} + \tau^3} - \frac{3b\tau^2\tau's}{(s^{1/2} + \tau^3)^2}
  = \bigg(\frac{b'}{b} + \frac{3\tau^2\ell}{s^{1/2} + \tau^3}\bigg)W.
\end{equation*}
So in view of 
\begin{equation*}
  s^{3/2}\uw_{ss} = - \frac{1}{4}\cdot\frac{s^{1/2}+3\tau^3}{(s^{1/2} + \tau^3)^2} \cdot \uw
  \quad\text{and}\quad 
  \uu = \frac{a}{b}\cdot\frac{s^{1/2} + 3\tau^3}{(s^{1/2} + \tau^3)^2}\cdot \uw,
\end{equation*}
we get 
\begin{equation*} 
\begin{aligned}
  \mathcal{Q}(\uu,\uw) 
  &= \uw_t - 16s^{3/2}\uw_{ss} + \delta\uw - \uu\\
  &= \bigg(
    \frac{b'}{b} 
  + \frac{3\tau^2\ell}{s^{1/2} + \tau^3} 
  + \delta 
  + \Big(4-\frac{a}{b}\Big)
    \cdot \frac{s^{1/2}+3\tau^3}{(s^{1/2} + \tau^3)^2}
    \bigg) \cdot \uw\\ 
  &= \bigg(\frac{b'}{b} + \delta \bigg)\cdot \uw  
  + \frac{4-a/b + 3\ell\tau^2}{s^{1/2} + \tau^3}\cdot \uw 
  + \frac{2(4-a/b)\tau^3}{(s^{1/2} + \tau^3)^2} \cdot \uw\\
  &= (\delta - \ell)\uw  
  + \frac{4(1 - \e^{\gamma\tau}) + 3\ell\tau^2}{s^{1/2} + \tau^3} \cdot \uw 
  + \frac{8(1-\e^{\gamma\tau})\tau^3}{(s^{1/2} + \tau^3)^2}\cdot \uw
\end{aligned} 
\end{equation*}
for all $(s,t)\in(0,1)\times(0,\varepsilon/\ell)$,
where we have used the notations $a$ and $b$ in Definition~\ref{def: subsolution}.
Using the identity~\eqref{eq: a basic inequalities},
we estimate 
\begin{equation*}
  \mathcal{Q}(\uu,\uw)\leq (\delta -\ell)\uw 
  + \frac{(3\ell - 2\gamma^2)\tau^2}{s^{1/2} + \tau^3}\cdot\uw 
  \quad \text{for all } (s,t)\in(0,1)\times(0,\varepsilon/\ell).
\end{equation*}
Thus \eqref{eq: Q(uu,uw) leq 0} is verified according to \eqref{eq: ell geq delta} and \eqref{eq: gamma2 geq 3ell}.
\end{proof}

We check that $\uu$ is also one component of the candidate for subsolutions.

\begin{lemma}
  The function pair $(\uu,\uw)$ from Definition~\ref{def: subsolution} has the properties that for each $t\in[0,\varepsilon/\ell)$, 
  the mapping $\uu(s,\cdot): [0,1]\mapsto \mathbb{R}$ is nonnegative, increasing and concave, that 
  \begin{equation}
  \label{eq: uu(0,t)=0}
    \uu(0,t) \equiv 0 \quad\text{for all } t\in[0,\varepsilon/\ell),
  \end{equation}
  that 
  \begin{equation}
  \label{eq: uu approaches to 32}
    \uu(s,t)\to 2^5 \quad \text{for all } s\in(0,1], 
  \end{equation}
  as $t\uparrow\varepsilon/\ell$,
  that 
\begin{equation}
\label{eq: estimates of uu at right end}
  \sup_{t\in(0,\varepsilon/\ell)}\uu(1,t) < \frac{m}{2\pi^2},
\end{equation}
  and that
  \begin{equation}
  \label{eq: mathcal P(uu,uw) leq 0}
    \mP(\uu,\uw)\leq0\quad \text{for all } (s,t)\in(0,1)\times(0,\varepsilon/\ell).
  \end{equation}
\end{lemma}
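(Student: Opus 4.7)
The plan is to mirror the structure of the companion Lemma~\ref{le: Q(uu,uw) leq 0}: the three elementary properties follow by direct inspection, and the differential inequality $\mP(\uu,\uw) \leq 0$ reduces, via systematic algebra, to the prepared quadratic estimate \eqref{eq: a quadratic inequality}.

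First I would dispose of the elementary claims. The identity $\uu(0,t) = 0$ is immediate because the numerator $s^{3/2} + 3\tau^3 s$ vanishes at $s = 0$. For the pointwise limit \eqref{eq: uu approaches to 32}, I would use that $\tau(t) \to 0$ and $a(t) \to 2^5$ as $t \uparrow \varepsilon/\ell$, whence $\uu(s,t) \to 2^5 \cdot s^{3/2}/s^{3/2} = 2^5$ for any fixed $s \in (0,1]$. For the right-endpoint bound \eqref{eq: estimates of uu at right end}, since $(1+\tau^3)^3 \geq 1$ and $\tau \leq \varepsilon$, I obtain $\uu(1,t) \leq 2^5\e^{(\gamma+1)\varepsilon}(1+3\varepsilon^3) < m/(2\pi^2)$ by \eqref{eq: small varepsilon}. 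Monotonicity and concavity of $\uu(\cdot,t)$ follow from a direct but slightly telescoping computation yielding the clean formulas $\uu_s = 3a\tau^6/(s^{1/2}+\tau^3)^4 > 0$ and $\uu_{ss} = -6a\tau^6 s^{-1/2}/(s^{1/2}+\tau^3)^5 < 0$ on $(0,1)$.

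The main work is the differential inequality \eqref{eq: mathcal P(uu,uw) leq 0}. Writing $\tau'(t) = -\ell$ and $a'(t) = -(\gamma+1)\ell a$, a short calculation gives
\begin{equation*}
  \uu_t = -(\gamma+1)\ell\,\uu + \frac{18\ell a\tau^5 s}{(s^{1/2}+\tau^3)^4}.
\end{equation*}
Substituting this along with the explicit formulas for $\uu_s$, $\uu_{ss}$, and $\uw$ into $\mP$, and collecting over the common factor $\eta := s^{1/2}+\tau^3$, I would arrive at
\begin{equation*}
  \mP(\uu,\uw) = -(\gamma+1)\ell\,\uu + \frac{3as\tau^5}{\eta^4}\big(6\ell + \mu^\star\tau\big) + \frac{12as\tau^6(8-b)}{\eta^5}.
\end{equation*}
With $b = 8\e^\tau$, the identity $\e^\tau - 1 \geq \tau$ from \eqref{eq: a basic inequalities} bounds the third term above by $-96as\tau^7/\eta^5$. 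Combining this with the coarse lower bound $\uu \geq 3a\tau^3 s/\eta^3$ for the first term and $\tau \leq \varepsilon$ in the second, everything factors over $3as\tau^3/\eta^5$ to yield
\begin{equation*}
  \mP(\uu,\uw) \leq \frac{3as\tau^3}{\eta^3}\Big[-32\xi^2 + (6\ell + \mu^\star\varepsilon)\xi - (\gamma+1)\ell\Big]
  \qquad\text{with } \xi := \tau^2/\eta.
\end{equation*}
The bracketed quadratic is nonpositive by \eqref{eq: a quadratic inequality} (noting $2^5 = 32$), establishing \eqref{eq: mathcal P(uu,uw) leq 0}.

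The hard part is the bookkeeping and the anticipation. One must select the right linearization of $\e^\tau$ (namely $\e^\tau - 1 \geq \tau$ rather than any stronger form), the right lower bound on $\uu$ (the $3\tau^3 s$ piece rather than $s^{3/2}$), and the substitution $\xi = \tau^2/\eta$, so that the coefficients match precisely the quadratic prepared in Lemma~\ref{le: parameters}. Once that alignment is anticipated, the rest is mechanical; indeed, the definition of $a$ in Definition~\ref{def: subsolution} exactly encodes $(\gamma+1)\ell$ as the dissipative rate supplied by $\uu_t$, and the parameters $\gamma, \ell, \varepsilon$ have been tuned in Lemma~\ref{le: parameters} precisely so that this quadratic closes.
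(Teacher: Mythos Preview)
Your argument is correct and follows essentially the same route as the paper: the elementary claims are handled identically, and for \eqref{eq: mathcal P(uu,uw) leq 0} both you and the paper compute $\uu_s$, $\uu_{ss}$, $\uu_t$ explicitly, factor out $3a\tau^3 s/(s^{1/2}+\tau^3)^3$, drop the favorable $s^{3/2}$-piece of $\uu$, linearize $\e^\tau-1\geq\tau$, and reduce to the quadratic \eqref{eq: a quadratic inequality} in $\xi=\tau^2/(s^{1/2}+\tau^3)$. The only slip is a typo in your prose (``factors over $3as\tau^3/\eta^5$'') contradicted by your own correct displayed formula with $\eta^3$.
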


\begin{proof}
\eqref{eq: uu(0,t)=0} and \eqref{eq: uu approaches to 32} are obvious.
It follows from \eqref{eq: small varepsilon} that 
\begin{equation*}
\begin{aligned} 
  \uu(1,t) 
  &= \frac{a(1+3\tau^3)}{(1+\tau^3)^3}
  \leq a(1+3\tau^3) \\ 
  &\leq 2^5\e^{(\gamma+1)\varepsilon}(1+3\varepsilon^3)
  < \frac{m}{2\pi^2} 
  \quad \text{for all } t\in(0,\varepsilon/\ell),
  \end{aligned}
\end{equation*}
which readily implies \eqref{eq: estimates of uu at right end}.
  We calculate
  \begin{equation*}
    \uu_s 
    = \frac{3a}{(s^{1/2} + \tau^3)^3}
    \cdot\bigg(\frac{s^{1/2}}{2} + \tau^3\bigg)
    - \frac{3a(s^{3/2}+3\tau^3s)}{(s^{1/2} + \tau^3)^4}\cdot\frac{s^{-1/2}}{2}
    = \frac{3a\tau^6}{(s^{1/2} + \tau^3)^4} 
  \end{equation*} 
  and hence 
  \begin{equation*}
    \uu_{ss} = - \frac{6a\tau^6s^{-1/2}}{(s^{1/2} + \tau^3)^5}
  \end{equation*}
as well as  
\begin{align*}
  \uu_t &= \frac{a'(s^{3/2}+ 3\tau^3s)}{(s^{1/2} + \tau^3)^3} 
  + \frac{9a\tau^2\tau's}{(s^{1/2} + \tau^3)^3}
  - \frac{9a(s^{3/2}+ 3\tau^3s)\tau^2\tau'}{(s^{1/2} + \tau^3)^4}\\
  &= \frac{a'(s^{3/2}+ 3\tau^3s)}{(s^{1/2} + \tau^3)^3} 
  + \frac{18a\ell\tau^5s}{(s^{1/2} + \tau^3)^4}.
\end{align*}
Therefore, we have 
\begin{align*}
  \mP(\uu,\uw) 
  & = \uu_t 
  - 16s^{3/2} \uu_{ss} 
  - 4\uu_s\bigg(\uw - \frac{\mu^\star s}{4}\bigg)\\
  &= \frac{a'(s^{3/2}+ 3\tau^3s)}{(s^{1/2} + \tau^3)^3} 
  + \frac{18a\ell\tau^5s}{(s^{1/2} + \tau^3)^4}\\ 
  &\quad + \frac{2^5\cdot3a\tau^6s}{(s^{1/2} + \tau^3)^5} 
  - \frac{12ab\tau^6s}{(s^{1/2} + \tau^3)^5}
  + \frac{3a\mu^\star\tau^6 s}{(s^{1/2} + \tau^3)^4}\\
  &= \frac{3a\tau^3s}{(s^{1/2} + \tau^3)^3}\cdot I,
\end{align*}
where 
\begin{align*}
  I &:= \frac{(2^5-4b)\tau^3}{(s^{1/2} + \tau^3)^2} 
  + \frac{\mu^\star \tau^3 + 6\ell\tau^2}{s^{1/2} + \tau^3} + \frac{a's^{1/2}}{3a\tau^3} + \frac{a'}{a}\\
  &\:= \frac{2^5(1-\e^\tau)\tau^3}{(s^{1/2} + \tau^3)^2} 
  + \frac{\mu^\star \tau^3 + 6\ell \tau^2}{s^{1/2} + \tau^3} 
  - \frac{s^{1/2}}{3\tau^3}\cdot (\gamma+1)\ell 
  - (\gamma+1)\ell 
\end{align*}
for all $(s,t)\in(0,1)\times(0,\varepsilon/\ell)$.
The identity~\eqref{eq: a basic inequalities} implies 
\begin{equation*}
I \leq - 2^5\cdot\frac{\tau^4}{(s^{1/2} + \tau^3)^2} 
  + (\mu^\star\varepsilon+6\ell) \cdot \frac{\tau^2}{s^{1/2} + \tau^3}
  - (\gamma+1)\ell
\end{equation*}
for all $(s,t)\in(0,1)\times(0,\varepsilon/\ell)$.
It follows from \eqref{eq: a quadratic inequality} that $I\leq0$ for all $(s,t)\in(0,1)\times(0,\varepsilon/\ell)$ and thus \eqref{eq: mathcal P(uu,uw) leq 0} holds.
\end{proof}

\subsection{Proof of main theorem}

We are in a position to show Theorem~\ref{thm: finite-time blowup}.

\begin{proof}[Proof of Theorem~\ref{thm: finite-time blowup}]
  Put 
  \begin{equation}
    \label{eq: choice of mu star}
    \mu^\star := 2^5\bigg(1+\kappa+\frac{1}{\kappa}\bigg) + \frac{2m}{\pi^2\delta}.
  \end{equation}
  Take the function pair 
  \begin{equation*}
    (\uu,\uw) \in C^{2,1}([0,1]\times[0,T))\times C^{2,1}([0,1]\times[0,T))
  \end{equation*} 
  with 
  \begin{equation*}
    T := \varepsilon/\ell > 0,
  \end{equation*}
  from Definition~\ref{def: subsolution}. 
  Put 
  \begin{equation*}
    M^{(u)}(\cdot) := 2\pi^2\uu(\cdot,0)
    \quad\text{and}\quad 
    M^{(w)}(\cdot) := 2\pi^2\uw(\cdot,0)
    \quad \text{for } s\in(0,1).
  \end{equation*}
  Clearly, \eqref{eq: vanish at zero} is valid according to \eqref{eq: uw(0,t)=0} and \eqref{eq: uu(0,t)=0}, 
  and \eqref{eq: estimates at right end} follows from \eqref{eq: estimate of uw at right end} and \eqref{eq: estimates of uu at right end}.
  Let $(u_0,w_0)$ be a function pair complying with \eqref{h: initial data}--\eqref{h: initial comparison}. 
  (Note that \eqref{eq: estimates at right end} warrants the existence of such initial data according to Lemma~\ref{le: function comparison}.)
  We claim that the corresponding classical solution $(u,v,w)$ of \eqref{sys: ks isp pep} 
  given by Proposition~\ref{prop: local existence and uniqueness} blows up in finite time $T_{\max} \leq T$.

  Suppose by absurdum that $T_{\max} > T$. Then we have 
  \begin{equation}
    \label{eq: u is bounded above at T}
    \|u(\cdot, T)\|_{L^\infty(\Omega)} < \infty
  \end{equation}
  by Proposition~\ref{prop: local existence and uniqueness}.
  Let the function pair $(U,W) = (\mathcal{L}u,\mathcal{L}w)$ for $(s,t)\in[0,1]\times[0,T_{\max})$ be from Lemma~\ref{le: cumulated densities}.
  \eqref{eq: u is bounded above at T} gives 
  \begin{equation}
    \label{eq: pointwise estimate of U at T}
    U(s,T) \leq \frac{s}{4}\cdot \|u(\cdot, T)\|_{L^\infty(\Omega)} 
    \quad\text{for all } s\in(0,1).
  \end{equation}
  Now we aim to apply maxmium principle recorded in Lemma~\ref{le: comparison} to compare $(U,W)$ with $(\uu,\uw)$ over $[0,1]\times[0,T)$.
  In view of \eqref{sym: mu}, \eqref{sym: mu ast}, \eqref{sys: right boundary conditions}, \eqref{h: initial mass constraint} and \eqref{eq: choice of mu star}, with choices of $(\phi,\psi) = (U,W)$, we have 
  \begin{equation*}
    \begin{split} 
    \mu_\ast(t) 
    = \mu(t) 
    &= 4W(1,t) \\
    &= 4W_0(1)\e^{-\delta t} + \frac{2m\delta^{-1}}{\pi^2}(1-\e^{-\delta t})\\
    &\leq \frac{2}{\pi^2}\int_\Omega w_0\dd x + \frac{2m\delta^{-1}}{\pi^2}\\
    &\leq 
    \mu^\star 
    \quad \text{for all } t\in(0,T),
    \end{split}
  \end{equation*} 
  and thus thanks to \eqref{sys: mathcal P Q} and \eqref{eq: mathcal P(uu,uw) leq 0},
  \begin{equation}
    \label{eq: mathcal P mu U W geq uu uw}
    \mP(U,W) 
    \geq \mathcal{P}(U,W) 
    = 0 
    \geq \mP(\uu,\uw) 
    \quad \text{for all } (s,t)\in(0,1)\times(0,T).
  \end{equation}
As direct consequences of \eqref{sys: mathcal P Q} and \eqref{eq: Q(uu,uw) leq 0},
\begin{equation}
  \mathcal{Q}(U,W) 
  = 0 
  \geq \mathcal{Q}(\uu,\uw)
  \quad \text{for all } (s,t)\in(0,1)\times(0,T).
\end{equation}
Note that \eqref{h: initial comparison} guarantees 
\begin{equation}
  U_0(\cdot) \geq \uu(\cdot,0) 
  \quad\text{and}\quad 
  W_0(\cdot) \geq \uw(\cdot,0)
  \quad\text{for all } s\in[0,1],
\end{equation}
where $(U_0,W_0)$ is defined in \eqref{sym: U_0 W_0}.
It follows from \eqref{sys: right boundary conditions} and \eqref{eq: estimates of uu at right end} that 
\begin{equation}
  U(1,t) 
  \equiv \frac{m}{2\pi^2} 
  > \uu(1,t) 
  \quad\text{for all } t\in(0,T).
\end{equation}
Using \eqref{sys: right boundary conditions} and \eqref{h: initial comparison}, as well as \eqref{sym: uw} and \eqref{eq: small varepsilon again},
we compute  
\begin{equation}
  \label{eq: W(1,t) geq uw(1,t)}
  \begin{split}
    W(1,t) 
    &= W_0(1)\e^{-\delta t} + \frac{m\delta^{-1}}{2\pi^2}(1-\e^{-\delta t})\\
    &\geq W_0(1)\e^{-\delta t} 
    \geq \frac{\e^{-\delta t}}{2\pi^2}\cdot M^{(w)}(1) \\
    &= \uw(1,0)\e^{-\delta t}
    = \frac{b(0)\e^{-\delta t}}{1+\varepsilon^3}
    =\frac{8\e^{\varepsilon-\delta t}}{1+\varepsilon^3}\\ 
    &\geq \frac{8\e^{\varepsilon-\ell t}}{1+(\varepsilon-\ell t)^3}
    = \frac{b}{1+\tau^3}
    = \uw(1,t) 
    \quad \text{for all } t\in(0,T).
  \end{split}
\end{equation}
Since $U$ and $W$, as well as $\uu$ and $\uw$, vanish at the left lateral boundary according to \eqref{sys: left boundary conditions}, \eqref{eq: uw(0,t)=0} and \eqref{eq: uu(0,t)=0} for all $t\in(0,T)$, 
gathering \eqref{eq: mathcal P mu U W geq uu uw}--\eqref{eq: W(1,t) geq uw(1,t)} leads us to employ Lemma~\ref{le: comparison} with the choices $(\ou,\ow) = (U,W)$ to obtain 
\begin{equation}
  U \geq \uu 
  \quad \text{for all } (s,t)\in[0,1]\times[0,T).
\end{equation}
Therefore, it follows from \eqref{eq: uu approaches to 32} that 
\begin{equation}
  \label{eq: chemotactic collapse ?}
  U(\cdot,T) \geq 2^5 \quad\text{for all } s\in(0,1],
\end{equation}
which is incompatible with \eqref{eq: pointwise estimate of U at T}, as desired.
\end{proof}

\section{Global boundedness}
\label{sec: boundedness} 

This section is devoted to global boundedness of radially symmetric solutions to \eqref{sys: ks isp pep} in the case of $\int_\Omega u_0\dd x < 64\pi^2$, 
as a complement of Theorem~\ref{thm: finite-time blowup}.
From an independent interest in chemotactic collapse suggested by~\eqref{eq: chemotactic collapse ?}, we shall establish an $\varepsilon$-regularity derived from Lyapunov functional and Adams inequality.

\subsection{Lyapunov functional and Adams inequality}

The Lyapunov functional of the system \eqref{sys: ks isp ppp} was constructed in~\cite[Proposition~6.1]{Fujie2017}. 
A similar energy dissipation structure exists in \eqref{sys: ks isp pep}.
In the sequel, we may abuse the notation $\mathcal{F}(t) = \mathcal{F}(u,v)$.

\begin{proposition}
  \label{prop: Lyapunov functional}
  Let $(u,v,w)$ be a classical solution given in Proposition~\ref{prop: local existence and uniqueness}. 
  Then the following identity holds:
\begin{equation} 
  \label{eq: energyequation}
\frac{\dd}{\dd t} \mathcal{F}(u(t), v(t))+\mathcal{D}(u(t), v(t))=0 \quad \text {for all } t \in(0, T_{\max}),
\end{equation}
where
\begin{equation}
  \label{sym: mathcalF}
 \mathcal{F}(u, v) 
 :=\int_{\Omega}u \ln u 
 - \int_\Omega  uv
+\frac{1}{2} \int_{\Omega}|\Delta v|^2
+ \frac{\delta}{2}\int_\Omega |\nabla v|^2
\end{equation}
and 
\begin{equation} 
  \label{sym: mathcalD}
  \begin{split}
\mathcal{D}(u, v) 
:= \int_{\Omega} |\nabla v_t|^2
+ \int_{\Omega} u|\nabla(\ln u - v)|^2.
\end{split}
\end{equation}
In particular, 
\begin{equation}
 \mathcal{F}(0) 
 :=\int_{\Omega}u_0 \ln u_0 
 - \int_\Omega  u_0v_0
+\frac{1}{2} \int_{\Omega}|\Delta v_0|^2
+ \frac{\delta}{2}\int_\Omega |\nabla v_0|^2,
\end{equation}
where 
\begin{equation*}
v_0 \in \bigg\{\phi\in W^{2,2}(\Omega): \partial_\nu \phi \equiv 0 \text{ on }\partial\Omega \text{ and } \int_\Omega \phi \dd x = 0\bigg\}
\end{equation*}
is the unique solution to the Neumann problem 
\begin{equation*}
  \begin{cases}
    0 = \Delta v_0 - \frac{2}{\pi^2}\int_\Omega w_0\dd x + w_0, &x\in\Omega,\\
    \partial_\nu v_0 = 0, &x\in\partial\Omega.
  \end{cases}
\end{equation*}
\end{proposition}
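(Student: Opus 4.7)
The plan is to differentiate the four pieces of $\mathcal{F}$ in time and reassemble, exploiting the Neumann boundary conditions throughout. The cleanest approach rewrites the first equation as $u_t = \nabla\cdot[u\nabla(\ln u - v)]$ on the positivity set of $u$. Using $\partial_t(u\ln u) = (\ln u + 1)u_t$ and an integration by parts yields
\begin{equation*}
\frac{\dd}{\dd t}\int_\Omega u\ln u \dd x = -\int_\Omega u\nabla\ln u \cdot \nabla(\ln u - v) \dd x.
\end{equation*}
Applying the product rule to $\int uv$ and substituting the $u$-equation again gives
\begin{equation*}
\frac{\dd}{\dd t}\int_\Omega uv \dd x = -\int_\Omega u\nabla v\cdot \nabla(\ln u - v) \dd x + \int_\Omega uv_t \dd x.
\end{equation*}
Subtracting produces the clean identity
\begin{equation*}
\frac{\dd}{\dd t}\Big(\int_\Omega u\ln u \dd x - \int_\Omega uv \dd x\Big) = -\int_\Omega u|\nabla(\ln u - v)|^2 \dd x - \int_\Omega uv_t \dd x,
\end{equation*}
so the task reduces to identifying $\int_\Omega uv_t$.

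To handle $\int_\Omega uv_t$, I would eliminate $u$ using the $w$-equation, writing $u = w_t - \Delta w + \delta w$, and then eliminate $w$ via the elliptic $v$-equation $w = \mu(t) - \Delta v$. Differentiating this elliptic identity in time gives $w_t = \mu'(t) - \Delta v_t$. Since $\int_\Omega v(\cdot,t)\dd x = 0$ is preserved, $\int_\Omega v_t \dd x = 0$, killing the $\mu'$ contribution. Integrations by parts, using that $\partial_\nu v_t = 0$ (from differentiating the Neumann condition in $t$) and $\partial_\nu(\Delta v) = -\partial_\nu w = 0$ (from the $v$-equation and the Neumann condition on $w$), convert the expression into
\begin{equation*}
\int_\Omega uv_t \dd x = \int_\Omega |\nabla v_t|^2 \dd x + \frac{1}{2}\frac{\dd}{\dd t}\int_\Omega |\Delta v|^2 \dd x + \frac{\delta}{2}\frac{\dd}{\dd t}\int_\Omega |\nabla v|^2 \dd x.
\end{equation*}
Substituting this back into the entropy identity and rearranging yields precisely \eqref{eq: energyequation}. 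The treatment of the initial value $\mathcal{F}(0)$ is then simply the definition of $\mathcal{F}$ evaluated at $t = 0$, with $v_0$ obtained from standard elliptic theory applied to $w_0 \in W^{1,\infty}(\Omega)$.

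The main technical obstacle is keeping track of boundary terms during the repeated integrations by parts, in particular showing that $\int_\Omega \Delta w \cdot v_t\dd x$ rewrites as $\int_\Omega \Delta v \cdot \Delta v_t \dd x$, which depends on the compatibility $\partial_\nu(\Delta v) = 0$ verified above. Beyond this, justifying the computations rigorously requires enough spatial and temporal regularity: the classical $C^{2,1}$ regularity of $(u,w)$ provided by Proposition~\ref{prop: local existence and uniqueness}, combined with elliptic regularity applied to $w = \mu - \Delta v$ on compact time subintervals, supplies more than what is needed, so that each differentiation under the integral and each integration by parts is admissible on $(0, T_{\max})$.
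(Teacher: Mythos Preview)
Your argument is correct. The paper itself does not supply a proof of this proposition; it only states the result after remarking that the Lyapunov functional for the fully parabolic system~\eqref{sys: ks isp ppp} was constructed in \cite[Proposition~6.1]{Fujie2017} and that ``a similar energy dissipation structure exists in~\eqref{sys: ks isp pep}.'' Your computation is precisely the direct verification one would carry out to adapt that construction to the parabolic--elliptic--parabolic setting: the entropy part is handled exactly as in the classical Keller--Segel case, and the key new step---expressing $\int_\Omega u v_t$ by eliminating $u$ via the $w$-equation and then $w$ via the elliptic $v$-equation---is carried out correctly, with the boundary compatibilities $\partial_\nu v_t=0$ and $\partial_\nu w=0$ (hence $\partial_\nu(\Delta v)=0$) properly identified. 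The regularity justification you sketch (elliptic regularity for $v$ from $w\in C^{2,1}$, then differentiating in time) is also the right way to make the formal steps rigorous.
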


The following sharp Adams inequality was established in \cite{Tarsi2012}.

\begin{lemma}
  \label{le: Adams inequality}
  Let $D\subset\mathbb R^4$ be a bounded domain. Then there exists a constant $K>0$ such that 
  \begin{equation*}
    \sup _{u \in H^{2}(D) \cap H_0^{1}(D),\|\Delta u\|_{L^2(D)} \leq 1} 
    \int_{D} e^{32 \pi^2 u^2} \dd x \leq K|D|,
  \end{equation*} 
  and this inequality is sharp, i.e., if $32\pi^2$ is replaced by a larger number, then the supremum is infinite.
\end{lemma}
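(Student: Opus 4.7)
The plan is to adapt Adams' original Moser--Trudinger argument to the $H^2(D)\cap H^1_0(D)$ setting, since the standard Adams inequality requires the stronger $H^2_0(D)$ boundary condition. The underlying idea is to reduce, via symmetrization and a Green's-function representation, to a one-dimensional critical exponential estimate on the half-line with the sharp constant.

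First I would write $u = (-\Delta)^{-1}_D f$ with $f := -\Delta u$, so that $\|f\|_{L^2(D)}\leq 1$ and
\begin{equation*}
  u(x) = \int_D G_D(x,y)\,f(y)\dd y,
\end{equation*}
where $G_D$ is the Green's function of the Dirichlet Laplacian, whose singularity in $\mathbb R^4$ is $G_D(x,y)\sim\frac{1}{4\pi^2|x-y|^2}$. Next I would invoke Talenti's comparison theorem: replacing $u$ by its spherical decreasing rearrangement $u^\star$ on the ball $D^\star$ with $|D^\star|=|D|$ preserves the left-hand side of the inequality while not increasing $\|\Delta u\|_{L^2}$, so it suffices to treat radial, radially decreasing $u$ on a ball. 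This reduction is the step that makes the $H^2\cap H^1_0$ framework tractable, because Talenti's theorem needs only the single boundary condition $u|_{\partial D}=0$.

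On the ball, I would parametrize $r = Re^{-t/4}$ and recast the problem as a one-dimensional inequality on the half-line: one obtains a function $\varphi(t) = u(r)$ together with a representation $\varphi(t)=\int_0^\infty K(t,s)h(s)\dd s$ for some $h\in L^2(0,\infty)$ with $\|h\|_{L^2}\leq 1$, where the kernel $K$ is essentially $\min(t,s)$ (up to bounded corrections) because in $\mathbb R^4$ the Newtonian potential of a radial $L^2$ function, after the $r=Re^{-t/4}$ substitution, is a one-dimensional convolution with the constant kernel, with the Jacobian contributing precisely the factor $\tfrac{1}{32\pi^2}$ that determines the sharp exponent. Applying O'Neil's lemma to decouple $K$ and $h$, followed by the classical Adams lemma for such one-dimensional kernels, yields $\int_0^\infty e^{\varphi(t)^2 - t}\dd t\leq C$, which unwinds to $\int_D e^{32\pi^2 u^2}\dd x\leq K|D|$.

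Finally, sharpness would be obtained in the standard fashion by testing against a Moser-type concentrating sequence: for $\varepsilon\downarrow 0$ on a ball $D = B_\rho$, set
\begin{equation*}
  u_\varepsilon(x) = c_\varepsilon\,\psi(|x|/\varepsilon),
\end{equation*}
with $\psi$ a suitably truncated and smoothed logarithm, the normalizing constant $c_\varepsilon$ chosen so that $\|\Delta u_\varepsilon\|_{L^2}=1$; a direct computation then shows $\int_D e^{(32\pi^2+\eta)u_\varepsilon^2}\to\infty$ for every $\eta>0$. I expect the main obstacle to be the careful execution of Step~3, namely tracking the precise constants through the change of variables and O'Neil's inequality so as to recover exactly $32\pi^2$ and not a larger number; this is where Tarsi's refinement of Adams' argument---avoiding the biharmonic Green's function and working only with the Dirichlet Laplacian Green's function---does its real work.
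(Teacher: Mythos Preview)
The paper does not prove this lemma at all: it is stated as a citation of Tarsi~\cite{Tarsi2012}, with no argument given. Your outline is a faithful sketch of Tarsi's own proof---Green's-function representation via the Dirichlet Laplacian (which is what permits the weaker $H^2\cap H^1_0$ hypothesis in place of $H^2_0$), Talenti symmetrization to reduce to radial decreasing functions on a ball, the logarithmic change of variable to a half-line, and then O'Neil/Adams one-dimensional estimates to extract the constant $32\pi^2$. So there is no discrepancy in approach; you have simply supplied what the paper imports as a black box.

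One caution if you intend to write this out in full: the step you flag as the main obstacle is genuinely delicate. Tracking the constant through the one-dimensional reduction requires care with the Jacobian of $r=Re^{-t/4}$ in four dimensions and with the precise form of the kernel after the Green's-function bound; a loose estimate there will give a worse constant and lose sharpness. The sharpness test sequence also needs the normalization $\|\Delta u_\varepsilon\|_{L^2}=1$ computed exactly to leading order, not just up to $O(1)$. But as an outline your plan is sound and matches the cited source.
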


Inspired by \cite{Nagai1997}, we modify the Adams inequality above.

\begin{lemma}
  \label{le: corollary of Adams inequality}
  Let $D\subset\mathbb{R}^4$ be a bounded domain. Then 
  \begin{equation*}
    \int_D \phi\psi \leq \int_D \phi\ln\phi + \bigg(\int_D \phi\bigg)\ln(K|D| ) + \frac{1}{128\pi^2}\bigg(\int_D\phi\bigg)\|\Delta\psi\|_{L^2(D)}^2  - \int_D\phi\ln\int_D\phi,
  \end{equation*}
  for all nonnegative $\phi\in C^0(\overline{D})$, and $\psi\in H^2(D)\cap H^1_0(D)$. 
  Here, $K > 0$ is given as in Lemma~\ref{le: Adams inequality}. 
\end{lemma}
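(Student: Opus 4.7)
The plan rests on the pointwise Young/Legendre inequality
\begin{equation*}
ab \leq a\ln a - a + \e^{b}, \qquad a \geq 0,\ b \in \mathbb{R},
\end{equation*}
which is the Fenchel dual of $\e^{b}$. Applied with $a = \phi(x)$ and $b = \psi(x) - c$ for a free parameter $c \in \mathbb{R}$ and integrated over $D$, this yields
\begin{equation*}
\int_D \phi\psi \leq \int_D \phi\ln\phi - M + cM + \e^{-c}\int_D \e^{\psi},
\end{equation*}
where I write $M := \int_D \phi$. It remains to control $\int_D \e^{\psi}$ by the $H^2$-seminorm of $\psi$ and to choose $c$ optimally.

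To do the former, I would linearize the Gaussian-type weight in Lemma~\ref{le: Adams inequality} by means of the elementary estimate $\psi \leq \alpha\psi^2/\|\Delta\psi\|_{L^2(D)}^2 + \|\Delta\psi\|_{L^2(D)}^2/(4\alpha)$, using the sharp choice $\alpha = 32\pi^2$. Exponentiating, integrating, and applying Lemma~\ref{le: Adams inequality} to $\psi/\|\Delta\psi\|_{L^2(D)}$ (which has unit $L^2$-Laplacian) produces
\begin{equation*}
\int_D \e^{\psi} \leq K|D|\exp\!\bigg(\frac{\|\Delta\psi\|_{L^2(D)}^2}{128\pi^2}\bigg).
\end{equation*}
Inserting this and minimizing the resulting scalar function $c \mapsto cM + \e^{-c}K|D|\exp(\|\Delta\psi\|_{L^2(D)}^2/(128\pi^2))$ by setting its derivative to zero, I find the optimizer $c^{\star} = \ln(K|D|/M) + \|\Delta\psi\|_{L^2(D)}^2/(128\pi^2)$. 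Substituting $c^{\star}$ back, the additive $\pm M$ cancel, and the claimed inequality drops out after rewriting $-M\ln M = -\big(\int_D\phi\big)\ln\int_D\phi$.

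I do not anticipate a real obstacle; the argument is the standard pairing of Adams' inequality with the duality between relative entropy and log-moment-generating-function (the four-dimensional analogue of the Trudinger--Moser/Nagai template in $\mathbb{R}^2$). The only minor points to dispatch are the degenerate cases $M = 0$, handled by the convention $0\ln 0 = 0$ making both sides vanish, and $\|\Delta\psi\|_{L^2(D)} = 0$, where Poincar\'e on $H^1_0(D)$ forces $\psi \equiv 0$ so the inequality reduces to the non-negativity of relative entropy $\int_D \phi\ln\phi - M\ln M + M\ln|D| \geq 0$ (Jensen), which is consistent with the inequality's right-hand side.
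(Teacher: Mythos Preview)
Your proposal is correct and follows essentially the same route as the paper: both arguments reduce to the inequality $\int_D \phi\psi \leq \int_D\phi\ln\phi + M\ln\int_D \e^{\psi} - M\ln M$ (the paper obtains it in one stroke via Jensen for the concave logarithm, you via the pointwise Fenchel--Young inequality followed by optimization in $c$, which is the dual formulation of the same entropy estimate), and both then bound $\int_D \e^{\psi}$ by the identical Young-plus-Adams step. The only cosmetic point is that the degenerate case $\psi\equiv 0$ needs $K\geq 1$, which the paper assumes without loss of generality and you implicitly rely on.
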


\begin{proof}
  Without loss of generality, we may assume that $K>1$.
  If $\phi\equiv0$, there is nothing to prove, and therefore, we may assume $\phi\not\equiv0$.
  Using the concavity of the mapping $(0,\infty)\ni \xi\mapsto\ln x\in\mathbb R$, we estimate by Jensen inequality, 
  \begin{equation}
    \label{eq: an application of Jensen inequality}
    \begin{aligned}
      \int_D \phi\psi - \int_D \phi\ln\phi 
      &= \int_D \phi\ln\frac{\e^\psi}{\phi} 
      = \int_D \phi\cdot\int_D\frac{\phi}{\int_D\phi}\ln\frac{\e^\psi}{\phi}\\
      &\leq \int_D\phi \ln\int_D\frac{\e^\psi}{\int_D\phi}
      = \int_D \phi\ln\int_D\e^\psi - \int_D\phi\ln\int_D\phi.
    \end{aligned}
  \end{equation}
  If $\psi\equiv0$, we already finish the proof.
  So we consider only the case $\psi\not\equiv 0$.
  Let $\varphi = \psi/\|\Delta \psi\|_{L^2(D)}$.
  Using Young inequality, we see 
  \begin{equation*}
    \psi = \varphi \cdot \|\Delta \psi\|_{L^2(D)}\leq 32\pi^2\varphi^2 + \frac{1}{128\pi^2}\|\Delta \psi\|_{L^2(D)}^2.
  \end{equation*}
  Then Adams inequality entails 
  \begin{equation}
    \label{eq: an application of Adams inequality}
    \int_D \e^\psi 
    \leq \exp\bigg(\frac{1}{128\pi^2}\|\Delta \psi\|_{L^2(D)}^2\bigg)\cdot\int_D\e^{32\pi^2\varphi^2}
    \leq K|D| \exp\bigg(\frac{1}{128\pi^2}\|\Delta \psi\|_{L^2(D)}^2\bigg).
  \end{equation}
  The desired estimate follows from inserting~\eqref{eq: an application of Adams inequality} into~\eqref{eq: an application of Jensen inequality}.
\end{proof}

\subsection{Boundedness away from the origin}

We collect time-independent pointwise estimates, 
which will be useful.

\begin{lemma}
  \label{le: pointwise estimate}
  Let $(u,v,w)$ be a radially symmetric solution of \eqref{sys: ks isp pep}. 
  Then 
  \begin{equation}
    \label{eq: mass estimates of w}
    \int_\Omega w\dd x \leq \max\bigg\{\int_\Omega w_0\dd x, \frac{1}{\delta}\int_\Omega u_0\dd x\bigg\} 
    := \tilde{m}
    \quad \text{for all } t\in(0,T_{\max}),
  \end{equation}
  and there exists $C_v > 0$ such that 
  \begin{equation}
    \label{eq: pointwise estimates of v}
    |r^3v_r(r,t)| + |r^2v(r,t)| \leq C_v\tilde{m}
    \quad\text{for all } (r,t)\in(0,1)\times(0,T_{\max}).
  \end{equation}
  For any $\rho\in(0,1)$, 
  \begin{equation}
    \label{eq: pointwise estimate of u}
    \|u\|_{L^\infty(\Omega\setminus B_\rho\times(0,T_{\max}))} < \infty.
  \end{equation} 
  In particular, for any $\rho\in(0,1)$,
  \begin{equation}
    \label{eq: pointwise estimates of u and v}
    \|u\|_{C^{2,1}(\overline\Omega\setminus B_\rho\times [T_0,T_{\max}))} 
    + \|v\|_{C^{2,1}(\overline\Omega\setminus B_\rho\times [T_0,T_{\max}))} 
    < \infty
  \end{equation}
  with $T_0 := \min\{T_{\max}/2,1\}$.
\end{lemma}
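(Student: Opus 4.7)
I would handle the four estimates sequentially. For \eqref{eq: mass estimates of w}, integration of the third equation of \eqref{sys: ks isp pep} over $\Omega$, in combination with the Neumann condition and the mass--conservation identity~\eqref{eq: mass conservation}, reduces the evolution of $\int_\Omega w\dd x$ to the scalar linear ODE
\begin{equation*}
\frac{\dd}{\dd t}\int_\Omega w\dd x + \delta\int_\Omega w\dd x = \int_\Omega u_0\dd x,
\qquad t\in(0,T_{\max});
\end{equation*}
the claimed bound is read off by comparing the initial value with the equilibrium $\delta^{-1}\int_\Omega u_0\dd x$.

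For \eqref{eq: pointwise estimates of v}, the radial Poisson equation rewrites as $(r^3 v_r)_r = r^3(\mu - w)$, which can be integrated from $0$ to $r$ to give $r^3 v_r(r,t) = \int_0^r s^3(\mu - w)\dd s$. Since $|\Omega| = \pi^2/2$ and \eqref{eq: mass estimates of w} control both $\mu(t)$ and $\int_0^1 s^3 w(s,t)\dd s$ by a constant multiple of $\tilde m$, this already gives $|r^3 v_r|\leq C\tilde m$. To pass from $v_r$ to $v$, I would use the normalization $\int_\Omega v = 0$ together with integration by parts in $r$, which expresses $v(1,t) = \int_0^1 s^4 v_s\dd s$ and hence $|v(1,t)|\leq C\tilde m$; one more integration of $v_s = s^{-3}\cdot(s^3 v_s)$ from $r$ to $1$ then produces $|v(r,t)|\leq C\tilde m/r^2$, and multiplying by $r^2$ yields \eqref{eq: pointwise estimates of v}.

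Estimate \eqref{eq: pointwise estimate of u} is the main substance. The previous step shows that $\|\nabla v(\cdot,t)\|_{L^\infty(\Omega\setminus B_{\rho/2})}$ is uniformly bounded on $(0,T_{\max})$ for each $\rho\in(0,1)$. Away from the origin the first equation of~\eqref{sys: ks isp pep} therefore reduces to a linear parabolic equation for $u$ with bounded drift, subject to the $L^1$--conservation~\eqref{eq: mass conservation}. I would pick a smooth cut--off $\zeta(r)$ supported in $\Omega\setminus B_{\rho/2}$ and equal to $1$ on $\Omega\setminus B_\rho$, test the equation successively against $u^{p-1}\zeta^2$ for $p = 2,4,\ldots$, and iterate \`a la Moser, exploiting that the boundary terms on $\partial B_{\rho/2}$ disappear due to the cut--off, while those on $\partial\Omega$ vanish by the Neumann conditions on $u$ and $v$. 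This delivers a time--uniform $L^\infty$ bound on $u$ over $\Omega\setminus B_\rho$.

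Finally, \eqref{eq: pointwise estimates of u and v} follows by a standard bootstrap: with $u\in L^\infty((\Omega\setminus B_{\rho/2})\times(0,T_{\max}))$ and $\mu\in L^\infty(0,T_{\max})$ in hand, interior elliptic Schauder theory applied to the equation for $v$ on the annulus $\Omega\setminus B_{\rho/2}$ yields $C^{2+\alpha,1+\alpha/2}$--regularity of $v$ away from the origin, and parabolic Schauder estimates then upgrade $u$ to $C^{2,1}$ on $\overline\Omega\setminus B_\rho\times[T_0,T_{\max})$; the positive shift $T_0$ is introduced precisely so that $u(\cdot,T_0/2)$ may be treated as smooth radial Cauchy data. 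The hardest ingredient I expect is the Moser iteration in step three, because the drift $\nabla v$ still blows up as $r\downarrow 0$ (cf.~\eqref{eq: pointwise estimates of v}); choosing the cut--off so that its support stays strictly away from the singular region is essential, and this is exactly why the lemma only asserts $L^\infty$ bounds on $\Omega\setminus B_\rho$ rather than globally.
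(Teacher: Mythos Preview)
Your argument is correct, and steps~1, 2 and~4 match the paper almost exactly (for the base value of $v$ you use the zero-mean normalisation and integration by parts to get $v(1,t)=\int_0^1 s^4 v_s\dd s$, whereas the paper picks a mean-value point $r_0\in[1/2,1]$ together with the elliptic $L^1$--estimate of Br\'ezis--Strauss; both routes are equally short).

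The genuine difference is in step~3. You localise with a cut-off supported in $\Omega\setminus B_{\rho/2}$ and run a Moser iteration on the linear drift--diffusion equation with bounded drift, starting from the $L^1$ bound \eqref{eq: mass conservation}. The paper instead applies a ready-made \emph{global} weighted pointwise estimate for singular drift--diffusion problems due to Winkler~\cite[Theorem~1.1]{Winkler2020}: feeding in the bound $|r^3 v_r|\leq \tilde m/\pi^2$ from \eqref{eq: pointwise estimates of v} with the parameter choice $(\alpha,\beta,q,K)=(66,3,5,\tilde m^5\pi^{-8})$ yields directly $|x|^{66}u(x,t)\leq C_u$ on all of $\Omega\times(0,T_{\max})$, from which \eqref{eq: pointwise estimate of u} is immediate. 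The paper's approach is shorter and avoids the bookkeeping of nested cut-offs and the $L^p$ ladder, at the price of citing a black-box result; your localised iteration is more self-contained and makes transparent why the blow-up of $\nabla v$ at the origin is irrelevant once one stays on $\Omega\setminus B_\rho$.
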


\begin{proof}
  Integrating $w_t = \Delta w - \delta w + u$ over $\Omega$,
  we have 
  \begin{equation*}
    \frac{\dd}{\dd t}\int_\Omega w\dd x 
    = - \delta \int_\Omega w \dd x + \int_\Omega u_0\dd x
    \quad \text{for all } t\in(0,T_{\max})
  \end{equation*}
  according to the mass conservation property~\eqref{eq: mass conservation}.
  \eqref{eq: mass estimates of w} follows from a comparison argument.
  Multiplying $0 = r^{-3}(v_rr^3)_r - \mu + w$ by $r^3$ and integrating over $(0,r)$ give 
  \begin{equation}
    \label{eq: pointwise estimate of vr}
    |v_rr^3| 
    = \bigg|\int_0^rw\rho^3\dd\rho - \frac\mu4\cdot r^4\bigg|
    \leq \frac{\tilde{m}}{\pi^2}
    \quad\text{for all }(r,t)\in(0,1)\times(0,T_{\max}).
  \end{equation}
  We pick $r_0 = r_0(t)\in[1/2,1]$ with the property 
  \begin{equation*}
    |v(r_0,t)| 
    \leq \frac{1}{|\Omega\setminus B_{1/2}|}\int_{\Omega\setminus B_{1/2}} |v|\dd x 
    \leq \frac{2}{\pi^2}\int_\Omega |v|\dd x 
    \leq C_{\mathrm{e}}\tilde{m}
  \end{equation*}
  for all $t\in(0,T_{\max})$,
  where the constant $C_{\mathrm{e}} > 0$ is provided by the well-known elliptic regularity estimates~\cite{Brezis1973}. Then Newton-Leibniz formula enables us to estimate 
  \begin{equation}
    \label{eq: pointwise estimate of v}
    \begin{split} 
    |v(r,t)| 
    &\leq |v(r,t) - v(r_0,t)| + |v(r_0,t)| 
    \leq \frac{\tilde{m}}{\pi^2}\bigg|\int_{r_0}^r \rho^{-3}\dd\rho\bigg| + C_{\mathrm{e}}\tilde{m}\\
    &\leq \tilde{m}\bigg(r^{-2}+r^{-2}_0+C_{\mathrm{e}}\bigg)
    \leq (5+C_\mathrm{e})\cdot\frac{\tilde{m}}{r^2}
    \end{split}
  \end{equation}
  for all $(r,t)\in(0,1)\times(0,T_{\max})$. 
  So \eqref{eq: pointwise estimates of v} follows from \eqref{eq: pointwise estimate of vr} and \eqref{eq: pointwise estimate of v}. 
  Applying pointwise estimates~\cite[Theorem~1.1]{Winkler2020} in the singular drift-diffusion problem 
  \begin{equation*}
      u_t = \Delta u - \nabla\cdot(u\nabla v)
  \end{equation*}
  with choices of parameters therein $\alpha = 66$, $\beta = 3$, $q=5$ and $K = \tilde{m}^5\pi^{-8}$, 
  we can find $C_u = C_u(\tilde{m}, L) > 0$ such that 
  \begin{equation}
    |x|^{66} u(x,t) \leq C_u
    \quad \text{for all } x\in\Omega 
    \text{ and } t\in(0,T_{\max}),
    \text{ with } L := \sup_{x\in\Omega} (|x|^{66}u_0(x)),
  \end{equation}
  which gives \eqref{eq: pointwise estimate of u}.
  Then we can employ standard parabolic and elliptic regularity theories to assert \eqref{eq: pointwise estimates of u and v}.  
\end{proof}

\subsection{Proof of Proposition~\ref{prop: varepsilon regularity}}

We shall derive Proposition~\ref{prop: varepsilon regularity} by bootstrapping arguments.

\begin{lemma}
  \label{le: entropy finity}
  Let $(u,v,w)$ be a radially symmetric solution of \eqref{sys: ks isp pep}. 
  If there exists $\varrho\in(0,1]$ such that \eqref{eq: supremum on local integral of u} holds, 
  then 
  \begin{equation}
    \label{eq: ulnu is uniform-in-time integrable}
    \sup_{t\in(0,T_{\max})}\int_{\Omega}u\ln (u+1)\dd x 
    + \sup_{t\in(0,T_{\max})}\|w\|_{L^2(\Omega)} < \infty.
  \end{equation}
\end{lemma}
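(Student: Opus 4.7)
The plan is to combine the monotonicity of the Lyapunov functional $\mathcal{F}(u,v)(t)\le\mathcal{F}(0)$ coming from Proposition~\ref{prop: Lyapunov functional} with the entropy-Adams inequality of Lemma~\ref{le: corollary of Adams inequality}, using the stationary pointwise estimates of Lemma~\ref{le: pointwise estimate} to dispose of the region away from the origin. The objective is to reach a master inequality of the shape
\begin{equation*}
  \int_\Omega uv\dd x
  \leq \tfrac{1}{\lambda}\int_\Omega u\ln u\dd x
  + \tfrac{\lambda\cdot \sup_{t\in(0,T_{\max})}\int_{B_\varrho} u\dd x}{128\pi^2}\|\Delta v\|_{L^2(\Omega)}^2 + C
\end{equation*}
for some $\lambda>1$ independent of $t$, which once inserted into $\mathcal{F}(u,v)(t)\le\mathcal{F}(0)$ will absorb both $\int_\Omega u\ln u$ and $\tfrac12\|\Delta v\|_{L^2}^2$ into strict positive multiples of themselves.

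The critical observation is that radial symmetry of $v$ forces the shift $\psi:=v-v(\varrho,t)$ to vanish on $\partial B_\varrho$, so $\psi\in H^2(B_\varrho)\cap H^1_0(B_\varrho)$ with $\|\Delta\psi\|_{L^2(B_\varrho)}=\|\Delta v\|_{L^2(B_\varrho)}$, and Lemma~\ref{le: corollary of Adams inequality} applies on $D=B_\varrho$ with $\phi=u(\cdot,t)$. By \eqref{eq: supremum on local integral of u} I can fix $\lambda>1$ with $\lambda\cdot\sup_{t}\int_{B_\varrho}u<64\pi^2$; applying the Adams lemma to $\phi=u$ and to $\lambda\psi$ in place of $\psi$ and dividing through by $\lambda$ produces the $B_\varrho$-version of the master inequality. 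The remaining pieces of $\int_\Omega uv$ are then handled uniformly in $t$: the boundary correction $v(\varrho,t)\int_{B_\varrho}u$ is dominated by the decay $|r^2v|\le C_v\tilde m$ from \eqref{eq: pointwise estimates of v} together with mass conservation \eqref{eq: mass conservation}, $\int_{\Omega\setminus B_\varrho}uv$ is controlled by \eqref{eq: pointwise estimate of u} and \eqref{eq: pointwise estimates of v}, and promoting $\int_{B_\varrho}u\ln u$ to $\int_\Omega u\ln u$ costs only $e^{-1}|\Omega|$ via the elementary inequality $\xi\ln\xi\ge -e^{-1}$.

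Substituting the master inequality into $\mathcal{F}(u,v)(t)\le\mathcal{F}(0)$ and rearranging gives
\begin{equation*}
  \Bigl(1-\tfrac{1}{\lambda}\Bigr)\int_\Omega u\ln u\dd x
  + \Bigl(\tfrac12 - \tfrac{\lambda\sup_{t}\int_{B_\varrho}u}{128\pi^2}\Bigr)\|\Delta v\|_{L^2}^2
  + \tfrac{\delta}{2}\|\nabla v\|_{L^2}^2
  \leq \mathcal{F}(0)+C,
\end{equation*}
where both bracketed coefficients are strictly positive by the choice of $\lambda$. This simultaneously yields uniform bounds on $\int_\Omega u\ln u$ and on $\|\Delta v\|_{L^2}$; the entropy bound gives the first half of \eqref{eq: ulnu is uniform-in-time integrable} via $\xi\ln(\xi+1)\le\xi\ln\xi+\xi\ln 2+e^{-1}$, and reading off $w=\mu-\Delta v$ from the second equation of \eqref{sys: ks isp pep} together with the $L^\infty$ bound on $\mu$ implied by \eqref{eq: mass estimates of w} delivers the second half. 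The main obstacle I anticipate is precisely the simultaneous calibration of $\lambda$: the strictness in \eqref{eq: supremum on local integral of u} is exactly what opens a window for a single $\lambda>1$ to push the entropy coefficient below one while keeping the bi-Laplacian coefficient below one-half, so the threshold $64\pi^2$ is sharp for this Lyapunov-Adams approach and no such argument can reach it.
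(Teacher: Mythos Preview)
Your proposal is correct and follows essentially the same argument as the paper. Your parameter $\lambda$ is the paper's $\chi:=\tfrac12(64\pi^2/A+1)$, and while the paper applies Lemma~\ref{le: corollary of Adams inequality} with $\phi=u/\chi$ and $\psi=\chi(v-v(\varrho,t))$ rather than with $\phi=u$ and $\psi=\lambda(v-v(\varrho,t))$, the two choices unwind to the same master inequality; the remaining steps---handling the shift $v(\varrho,t)$ and the annulus via Lemma~\ref{le: pointwise estimate}, inserting into $\mathcal{F}(t)\le\mathcal{F}(0)$, and reading off $\|w\|_{L^2}$ from $w=\mu-\Delta v$---match line for line.
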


\begin{proof}
  Denote $A$ the supremum in \eqref{eq: supremum on local integral of u}.
  Write 
  \begin{equation*}
    \chi := \frac{1}{2}\cdot\bigg(\frac{64\pi^2}{A} + 1\bigg).
  \end{equation*}
  Then 
  \begin{equation}
    \label{eq: properties of chi}
    \chi > 1
    \quad \text{and} \quad 
    \frac{\chi A}{128\pi^2} < \frac{1}{2}
  \end{equation}
  according to \eqref{eq: supremum on local integral of u}.
  Using Lemma~\ref{le: corollary of Adams inequality} with choices of 
  $D = B_\varrho$, $\phi = u/\chi$ and $\psi = (v-v(\varrho,t))\chi$,
  we get 
  \begin{equation*}
    \begin{split} 
    \int_{B_\varrho} u(v-v(\varrho,t)) 
    &\leq 
    \int_{B_\varrho} \frac{u}{\chi}\ln\frac{u}{\chi} 
    + \bigg(\int_{B_\varrho} \frac{u}{\chi}\bigg)\ln(K|B_\varrho|)\\ 
    &\quad + \frac{1}{128\pi^2}\bigg(\int_{B_\varrho}\frac{u}{\chi}\bigg)\|\chi\Delta (v-v(\varrho,t))\|_{L^2(B_\varrho)}^2  
    - \int_{B_\varrho}\frac{u}{\chi}\ln\int_{B_\varrho}\frac{u}{\chi}\\
    &= \frac{1}{\chi}\int_{B_\varrho}u\ln u 
    + \frac{\chi}{128\pi^2} 
    \cdot \|\Delta v\|_{L^2(B_\rho)}^2 
    \cdot \int_{B_\varrho}u\\
    &\quad + \frac1{\chi}
    \cdot \ln\frac{K\pi^2\varrho^4}{2} 
    \cdot \int_{B_\varrho}u 
    - \frac{\ln\chi}{\chi} \cdot \int_{B_\varrho} u 
    - \int_{B_\varrho}\frac{u}{\chi}
    \cdot \ln\int_{B_\varrho}\frac{u}{\chi}
    \end{split}
  \end{equation*}
  for all $t\in(0,T_{\max})$.
  Lemma~\ref{le: pointwise estimate} entails that 
  \begin{equation*}
    \int_{B_\varrho} uv 
    \leq \frac{1}{\chi}\int_{B_\varrho}u\ln u 
    + \frac{\chi A}{128\pi^2}\cdot \|\Delta v\|_{L^2(B_\varrho)}^2 
    + \varLambda(\varrho)  
  \end{equation*} 
  for all $t\in(0,T_{\max})$,
  with 
  \begin{equation*}
    \varLambda(\varrho) 
    := \frac{A}{\chi}
    \cdot\max\bigg\{\ln\frac{K\pi^2\varrho^4}{2\chi}, 0\bigg\} 
    + \frac{\pi^2\varrho^4}{2\e} + C_vA\tilde{m}\varrho^{-2}.
  \end{equation*}
  Proposition~\ref{prop: Lyapunov functional} gives 
  \begin{equation*}
    \begin{split}
      \int_\Omega u\ln u
      &= \mathcal{F}(t) 
      + \int_\Omega uv 
      - \frac{1}{2}\cdot\|\Delta v\|_{L^2(\Omega)}^2
      - \frac{\delta}{2}\int_\Omega |\nabla v|^2\\
      &\leq \mathcal{F}(0) 
      + \int_{\Omega\setminus B_{\varrho}} uv 
      + \frac{1}{\chi}\int_{B_\varrho}u\ln u 
      + \frac{\chi A}{128\pi^2}\cdot \|\Delta v\|_{L^2(B_\varrho)}^2 
    + \varLambda(\varrho) 
    - \frac{1}{2}\cdot\|\Delta v\|_{L^2(\Omega)}^2\\
    &\leq \mathcal{F}(0) 
    + \frac{1}{\chi}\int_{\Omega}u\ln u 
    + \bigg(\frac{\chi A}{128\pi^2} - \frac{1}{2}\bigg) 
    \cdot \|\Delta v\|_{L^2(\Omega)}^2 
    + C_v\tilde{m}\varrho^{-2}\int_\Omega u_0 
    + \frac{\pi^2}{\chi\e} 
    + \varLambda(\varrho)
    \end{split}
  \end{equation*}
  for all $t\in(0,T_{\max})$. 
  Therefore,
  It follows from \eqref{eq: properties of chi} that 
  \begin{equation}
    \label{eq: ulnu + Delta v2 uniform-in-time integrable}
  \sup_{t\in(0,T_{\max})} \int_\Omega u\ln u 
  + \sup_{t\in(0,T_{\max})} \int_\Omega |\Delta v|^2 
  < \infty.
  \end{equation}
  Noting that 
  \begin{equation*}
    \|w\|_{L^2(\Omega)} 
    \leq \|w-\mu\|_{L^2(\Omega)} 
    + \|\mu\|_{L^2(\Omega)}
    \leq \|\Delta v\|_{L^2(\Omega)} 
    + \pi\|w\|_{L^1(\Omega)}
  \end{equation*}
  for all $t\in(0,T_{\max})$,
  and 
  \begin{equation*}
    \xi\ln(1+\xi) 
    = \xi\ln\bigg(1+\frac{1}{\xi}\bigg) 
    + \xi\ln\xi 
    \leq \xi\ln\xi +1 
    \quad \text{for all } \xi > 0,
  \end{equation*}
  we get \eqref{eq: ulnu is uniform-in-time integrable} from \eqref{eq: ulnu + Delta v2 uniform-in-time integrable} and \eqref{eq: mass estimates of w}.
\end{proof}

\begin{lemma}
  \label{eq: Gagliardo-Nirenberg inequality}
  Let $D\subset\mathbb{R}^4$ be a bounded domain.
  Then there exists $C_{\mathrm{gn}}>0$ such that for any $p\geq1$ and $q>0$, 
  \begin{equation}
    \label{eq: GNI 2+2/p grad2 2/p}
    \|\varphi\|_{L^{2+1/p}(D)}^{2+1/p} 
    \leq C_{\mathrm{gn}}\|\nabla \varphi\|_{L^2(D)}^2\|\varphi\|_{L^{2/p}(D)}^{1/p} 
    + C_{\mathrm{gn}}\|\varphi\|_{L^{2/p}(D)}^{2+1/p} 
  \end{equation}
  and 
  \begin{equation}
    \label{eq: GNI 2+1/p grad2 2/p}
    \begin{split} 
    \|\varphi\|_{L^{2+1/p}(D)}^{2+1/p}
    &\leq C_{\mathrm{gn}}\ln^{-q/p}(1+k)\|\nabla \varphi\|_{L^2(D)}^2\|\varphi\ln^q(\varphi+1)\|_{L^{2/p}(D)}^{1/p} \\
    &\quad + C_{\mathrm{gn}}\|\varphi\|_{L^{2/p}(D)}^{2+1/p}
    + C_{\mathrm{gn}}|D|k^{2+1/p}
    \end{split}
  \end{equation}
  hold for all $k > 0$ and nonnegative function $\varphi\in C^1(\overline\Omega)$.
\end{lemma}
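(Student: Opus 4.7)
The plan is to view both inequalities as Gagliardo--Nirenberg-type interpolations in four dimensions, with the second reduced to the first by a level-set truncation at height $k$. For \eqref{eq: GNI 2+2/p grad2 2/p} I would apply the standard Gagliardo--Nirenberg inequality on the bounded domain $D$ with the exponent triple
\[
q = 2 + \tfrac{1}{p},\qquad r = \tfrac{2}{p},\qquad \theta = \tfrac{2p}{2p+1},
\]
which satisfies the 4D scaling identity $-\tfrac{4}{q} = -\theta - \tfrac{4(1-\theta)}{r}$ and, crucially, $q\theta = 2$ and $q(1-\theta) = 1/p$. Raising the Nirenberg interpolation
\[
\|\varphi\|_{L^{q}(D)} \leq C\bigl(\|\nabla\varphi\|_{L^{2}(D)}^{\theta}\|\varphi\|_{L^{r}(D)}^{1-\theta} + \|\varphi\|_{L^{r}(D)}\bigr)
\]
to the $q$-th power and applying $(a+b)^{q}\leq 2^{q-1}(a^{q}+b^{q})$ delivers \eqref{eq: GNI 2+2/p grad2 2/p} directly. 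A more transparent route uses the Hölder identity $\|\varphi\|_{L^{2+1/p}(D)}^{2+1/p}\leq \|\varphi\|_{L^{4}(D)}^{2}\|\varphi\|_{L^{2/p}(D)}^{1/p}$ (conjugate pair $(2,2)$ applied to $\varphi^{2}\cdot\varphi^{1/p}$, with constant $1$) combined with the critical 4D Sobolev embedding $W^{1,2}(D)\hookrightarrow L^{4}(D)$, after which the intermediate $\|\varphi\|_{L^{2}(D)}^{2}$ is interpolated between $L^{2/p}$ and $L^{4}$ and absorbed by Young.

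To obtain \eqref{eq: GNI 2+1/p grad2 2/p}, I would truncate $\varphi$ at level $k>0$. Writing $\varphi\leq k + (\varphi-k)_{+}$ pointwise and using $(a+b)^{2+1/p}\leq 2^{1+1/p}(a^{2+1/p}+b^{2+1/p})$,
\[
\int_{D}\varphi^{2+1/p}\dd x \leq 2^{1+1/p}|D|k^{2+1/p} + 2^{1+1/p}\int_{D}(\varphi-k)_{+}^{2+1/p}\dd x,
\]
which accounts for the $|D|k^{2+1/p}$ term in \eqref{eq: GNI 2+1/p grad2 2/p}. Since $(\varphi-k)_{+}\in W^{1,2}(D)$ satisfies $|\nabla(\varphi-k)_{+}|\leq|\nabla\varphi|$, feeding $(\varphi-k)_{+}$ into \eqref{eq: GNI 2+2/p grad2 2/p} (extended from $C^{1}(\overline{D})$ by density) yields
\[
\int_{D}(\varphi-k)_{+}^{2+1/p}\dd x \leq C_{\mathrm{gn}}\|\nabla\varphi\|_{L^{2}(D)}^{2}\|(\varphi-k)_{+}\|_{L^{2/p}(D)}^{1/p} + C_{\mathrm{gn}}\|(\varphi-k)_{+}\|_{L^{2/p}(D)}^{2+1/p}.
\]
The lower-order piece is dominated by $\|\varphi\|_{L^{2/p}(D)}^{2+1/p}$ via $(\varphi-k)_{+}\leq\varphi$, matching the middle term in \eqref{eq: GNI 2+1/p grad2 2/p}. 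For the leading piece, on the support $\{\varphi>k\}$ of $(\varphi-k)_{+}$ one has $\ln(\varphi+1)\geq\ln(1+k)$, hence the pointwise majorization $\varphi\leq \varphi\ln^{q}(\varphi+1)/\ln^{q}(1+k)$; taking $L^{2/p}$-norm and then the $(1/p)$-th power introduces exactly the weight $\ln^{-q/p}(1+k)$ multiplying $\|\varphi\ln^{q}(\varphi+1)\|_{L^{2/p}(D)}^{1/p}$. Absorbing $2^{1+1/p}\leq 4$ into the constant produces \eqref{eq: GNI 2+1/p grad2 2/p}.

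The main obstacle I anticipate is the \emph{uniformity of $C_{\mathrm{gn}}$ in $p$}, since the dependence on $q$ and $k$ in \eqref{eq: GNI 2+1/p grad2 2/p} is transparent once the first inequality is in hand. The Hölder step and the truncation argument contribute constants bounded explicitly (by $1$ and $4$ respectively), but the Young absorption in the auxiliary interpolation of $\|\varphi\|_{L^{2}(D)}^{2}$ between $L^{2/p}$ and $L^{4}$ uses the exponent $\lambda = (2p-2)/(2p-1)\in[0,1)$, whose Young constant formally degenerates as $p\to\infty$. My working assumption is that in the application the lemma is invoked for exponents in a bounded range arising from an $L^{p}$-bootstrap, so a $p$-dependent constant is acceptable; if genuinely uniform dependence is needed, one may appeal to the continuity of the Nirenberg constant on its parameters together with the limiting critical embedding $W^{1,2}(D)\hookrightarrow L^{4}(D)$ governing the $p\to\infty$ regime.
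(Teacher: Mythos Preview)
Your proposal is correct and follows essentially the same approach as the paper: the first inequality is Gagliardo--Nirenberg directly, and the second is obtained by truncating at level $k$, applying the first inequality to the truncated function, and using the pointwise bound $\varphi\le\varphi\ln^{q}(1+\varphi)/\ln^{q}(1+k)$ on $\{\varphi>k\}$ together with $|\varphi-\text{truncation}|\le k$ for the remainder. The only cosmetic difference is the choice of cutoff: you use $(\varphi-k)_{+}$, whereas the paper (following Biler--Hebisch--Nadzieja) uses the piecewise-linear $\sigma_{k}(\xi)$ equal to $0$ on $[0,k)$, $2\xi-2k$ on $[k,2k)$, and $\xi$ on $[2k,\infty)$; both satisfy $0\le\xi-\sigma_{k}(\xi)\le k$ and have a.e.\ derivative bounded by a constant, so the argument is identical.
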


\begin{proof}
\eqref{eq: GNI 2+2/p grad2 2/p} is a direct consequence of  Gagliardo-Nirenberg inequality.
Now we follow the lines in \cite{Biler1994} to show \eqref{eq: GNI 2+1/p grad2 2/p}.
Define 
  \begin{equation*}
    \sigma(\xi) = \sigma_k(\xi) :=
    \begin{cases}
      0, & \xi \in(-\infty, k),\\
      2\xi-2k, & \xi\in[k,2k),\\
      \xi, &\xi\in[2k,\infty).
    \end{cases}
  \end{equation*}
  Then $\sigma(\xi) 
  \leq 
  \min\{\xi, \xi\ln^q(1+\xi)/\ln^q(1+k)\}$
  for $\xi\geq0$ and $q>0$,
  and $\sigma'\in[0,2]$ for a.e. $\xi\in\mathbb{R}$.
  Using \eqref{eq: GNI 2+2/p grad2 2/p}, we can find $C_{\textrm{GN}} > 0$ independent to $p\geq1$, $q>0$ and $k>0$ such that  
  \begin{equation}
    \label{eq: sigma varphi estimates}
    \begin{split}
      \|\sigma(\varphi)\|_{L^{2+1/p}}^{2+1/p} 
      &\leq C_{\textrm{GN}}\|\nabla\sigma(\varphi)\|_{L^2(D)}^2
      \|\sigma(\varphi)\|_{L^{2/p}(D)}^{1/p} 
      + C_{\mathrm{GN}}\|\sigma(\varphi)\|_{L^{2/p}(D)}^{2+1/p} \\ 
      &\leq \frac{4C_{\textrm{GN}}}{\ln^{q/p}(1+k)}
      \|\nabla \varphi\|_{L^2(\Omega)}^2
      \|\varphi\ln^q(1+\varphi)\|_{L^{2/p}}^{1/p} 
      + C_{\textrm{GN}}\|\varphi\|_{L^{2/p}}^{2+1/p}
    \end{split}
  \end{equation}
  for all $\varphi\in C^1(\overline{\Omega})$.
  Since $\xi - \sigma(\xi)\in[0,k]$ for all $\xi\geq0$, 
  we have 
  \begin{equation}
    \label{eq: varphi estimates}
    \begin{split}
      \|\varphi\|_{L^{2+1/p}(D)}^{2+1/p} 
      &\leq 2^{1+1/p}\|\varphi-\sigma(\varphi)\|_{L^{2+1/p}(D)}^{2+1/p} 
      + 2^{1+1/p}\|\sigma(\varphi)\|_{L^{2+1/p}(D)}^{2+1/p} \\
      &\leq 2^{1+1/p}\|\sigma(\varphi)\|_{L^{2+1/p}(D)}^{2+1/p}
      + 2^{1+1/p}|D|k^{2+1/p}.
    \end{split}
  \end{equation}
  Inserting \eqref{eq: sigma varphi estimates} into \eqref{eq: varphi estimates}, we get \eqref{eq: GNI 2+1/p grad2 2/p}.
\end{proof}

\begin{lemma}
  \label{le: Lp finity}
  Let $(u,v,w)$ be a radially symmetric solution of \eqref{sys: ks isp pep}. 
  If \eqref{eq: ulnu is uniform-in-time integrable} holds,
  then 
  \begin{equation}
    \label{eq: up w2p is uniform-in-time integrable}
    \sup_{t\in(0,T_{\max})}\int_{\Omega}u^p\dd x 
    + \sup_{t\in(0,T_{\max})}\int_{\Omega}w^{2p}\dd x 
    < \infty
    \quad\text{for all } p > 1.
  \end{equation}
\end{lemma}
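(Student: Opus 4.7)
My plan is to run coupled $L^p$--testing for the $u$-- and $w$--equations and close the resulting inequality through the logarithmic Gagliardo--Nirenberg estimate just proved. Multiplying the first equation in \eqref{sys: ks isp pep} by $u^{p-1}$ and the third by $w^{2p-1}$, integrating over $\Omega$, and using $\Delta v = \mu(t) - w$, one obtains the two identities
\begin{equation*}
\frac{1}{p}\frac{\dd}{\dd t}\int_\Omega u^p + \frac{4(p-1)}{p^2}\int_\Omega|\nabla u^{p/2}|^2 + \frac{(p-1)\mu}{p}\int_\Omega u^p = \frac{p-1}{p}\int_\Omega u^p w,
\end{equation*}
\begin{equation*}
\frac{1}{2p}\frac{\dd}{\dd t}\int_\Omega w^{2p} + \frac{2p-1}{p^2}\int_\Omega|\nabla w^p|^2 + \delta\int_\Omega w^{2p} = \int_\Omega u w^{2p-1},
\end{equation*}
which I would then add with a balancing constant $\lambda>0$ in front of the second so as to arrive at a single differential inequality for the quantity $y(t) := \int_\Omega u^p + \lambda\int_\Omega w^{2p}$. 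Young's inequality applied to the two cross terms reduces them to integrals of the form $\int u^{2p+1}$ and $\int w^{2p+1}$, plus lower-order contributions absorbable either by $\delta\int w^{2p}$ or by a previously established induction hypothesis.

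To dispose of the critical integrals, I would apply Lemma~\ref{eq: Gagliardo-Nirenberg inequality} to $\varphi = u^{p/2}$ and $\varphi = w^p$, picking the parameter $2+1/p$ to match the exponents produced above. For the $u$--part, the weighted factor $\|\varphi\ln^q(\varphi+1)\|_{L^{2/p}}^{1/p}$ appearing in \eqref{eq: GNI 2+1/p grad2 2/p} is to be controlled via the entropy bound \eqref{eq: ulnu is uniform-in-time integrable}, possibly combined with interpolation against the induction hypothesis at a smaller exponent; for the $w$--part the already available $\|w\|_{L^2}$--bound plays the analogous role. The decisive feature is the logarithmic prefactor $\ln^{-q/p}(1+k)$, which can be made arbitrarily small by picking $k$ large, allowing the dangerous contributions to be absorbed into the gradient dissipation terms $\int|\nabla u^{p/2}|^2$ and $\int|\nabla w^p|^2$ on the left-hand side, while the residual $O(k^{2+1/p})$ piece is harmless.

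What remains after absorption is an ODI of the shape $y'(t) + c\,y(t) \leq C$, yielding the desired uniform-in-time bound by a standard Gr\"onwall argument. I would then iterate in $p$, starting from $p=1$ where Lemma~\ref{le: entropy finity} supplies the base estimates $\|u\ln(u+1)\|_{L^1}$ and $\|w\|_{L^2}$ needed to launch the scheme, and progress through a sequence $1 = p_0 < p_1 < p_2 < \dots$ whose step size is dictated by the Gagliardo--Nirenberg balance; ordinary $L^p$--interpolation then fills in the noninteger exponents. The chief obstacle is the dimension-four criticality: plain Gagliardo--Nirenberg fails by a logarithm to absorb $\int u^{2p+1}$ into $\int|\nabla u^{p/2}|^2$, and it is precisely the logarithmic refinement combined with the entropy hypothesis \eqref{eq: ulnu is uniform-in-time integrable} that makes the closure possible --- a reflection of the same critical-mass mechanism underlying Proposition~\ref{prop: varepsilon regularity}.
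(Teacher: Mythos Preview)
Your approach matches the paper's: coupled $L^p$ testing on the $u$-- and $w$--equations, Young's inequality on the cross terms, then the logarithmic Gagliardo--Nirenberg estimate \eqref{eq: GNI 2+1/p grad2 2/p} together with the entropy hypothesis for the $u$--integral and the plain estimate \eqref{eq: GNI 2+2/p grad2 2/p} together with the $L^2$ bound for the $w$--integral, closing via a Gr\"onwall argument. Two small corrections: the Young splits of $\int_\Omega u^p w$ and $\int_\Omega u\,w^{2p-1}$ produce $\int_\Omega u^{p+1/2}$ rather than $\int_\Omega u^{2p+1}$, and this is precisely what \eqref{eq: GNI 2+1/p grad2 2/p} applied to $\varphi = u^{p/2}$ at exponent $2+1/p$ controls (so your own choice of $\varphi$ and exponent is consistent with $p+\tfrac12$, not $2p+1$); and no iteration in $p$ is needed, since the elementary bound $\xi\ln(1+\xi^{p/2}) \leq C_p\bigl(1+\xi\ln(1+\xi)\bigr)$ lets the entropy hypothesis \eqref{eq: ulnu is uniform-in-time integrable} control $\|u\ln(u^{p/2}+1)\|_{L^1(\Omega)}$ directly for every fixed $p>1$, so the argument closes in a single step.
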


\begin{proof}
  Since 
  \begin{equation*}
    \xi\ln(1+\xi^q) 
    \leq 
    \begin{cases} 
      \ln 2, &\xi\in(0,1),\\
    \xi\ln2 + q\xi\ln\xi, &\xi\in[1,\infty), 
    \end{cases}
  \end{equation*}
  for all $q>0$, 
  \eqref{eq: ulnu is uniform-in-time integrable} implies that 
  \begin{equation*}
    \Upsilon = \Upsilon(p) := \sup_{t\in(0,T_{\max})}\int_{\Omega}u\ln (u^{p/2}+1)\dd x 
    + \sup_{t\in(0,T_{\max})}\|w\|_{L^2(\Omega)} < \infty
  \end{equation*} 
  holds for all $p>1$.
  Without loss of generality, we may assume that $p > 2$.
  Integrating by parts, we compute 
  \begin{equation}
    \label{eq: d/dt int up}
    \begin{split}
      \frac{1}{p-1}\frac{\dd}{\dd t}\int_\Omega u^p 
      &= \frac{p}{p-1}\int_\Omega u^{p-1}\nabla\cdot(\nabla u - u\nabla v)\\
      &= - p\int_\Omega u^{p-2}|\nabla u|^2 
      + p\int_\Omega u^{p-1}\nabla u\cdot\nabla v\\
      &= -\frac{4}{p}\int_\Omega|\nabla u^{p/2}|^2 
      + \int_\Omega \nabla u^p\cdot\nabla v 
      = -\frac{4}{p}\int_\Omega|\nabla u^{p/2}|^2 
      - \int_\Omega u^p\Delta v \\
      &\leq -\frac{4}{p}\int_\Omega|\nabla u^{p/2}|^2 
      + \int_\Omega u^p w \\
      &\leq -\frac{4}{p}\int_\Omega|\nabla u^{p/2}|^2 
      + \eta^{-2p}\int_\Omega u^{p + 1/2} 
      + \eta\int_\Omega w^{2p+1}
    \end{split}
  \end{equation}
  for all $t\in(0,T_{\max})$ and $\eta \in (0,1)$, 
  where we have used the second equation in \eqref{sys: ks isp pep}.
  Integrating by parts, we estimate 
  \begin{equation}
    \label{eq: d/dt int w2p}
    \begin{split}
      \frac{1}{2p-1}\frac{\dd}{\dd t}\int_\Omega w^{2p} 
      &= \frac{2p}{2p-1}\int_\Omega w^{2p-1}(\Delta w - \delta w + u)\\
      &= - 2p\int_\Omega w^{2p-2}|\nabla w|^2 
      - \frac{2p\delta}{2p-1}\int_\Omega w^{2p}
      + \frac{2p}{2p-1}\int_\Omega w^{2p-1}u\\
      &\leq - \frac{2}{p}\int_\Omega |\nabla w^p|^2 
      - \delta\int_\Omega w^{2p} 
      + \eta\int_\Omega w^{2p+1} 
      + 2^{p}\eta^{1/2-p}\int_\Omega u^{p+1/2}
    \end{split}
  \end{equation}
  for all $t\in(0,T_{\max})$ and $\eta \in (0,1)$.
  Write 
  \begin{equation*}
    y_p(t) := \frac{1}{p-1}\int_\Omega u^p 
    + \frac{1}{2p-1}\int_\Omega w^{2p}
    \quad\text{for } t\in(0,T_{\max}).
  \end{equation*}
  Then collecting \eqref{eq: d/dt int up} and \eqref{eq: d/dt int w2p} yields 
  \begin{equation}
    \label{eq: ODI of y_p}
    \begin{split}
      y_p' + \delta y_p 
      &\leq -\frac{4}{p}\int_\Omega|\nabla u^{p/2}|^2 
        + 2\eta^{-2p}\int_\Omega u^{p + 1/2} 
        + \delta\int_\Omega u^p\\ 
      &\quad - \frac{2}{p}\int_\Omega |\nabla w^p|^2 
      + 2\eta\int_\Omega w^{2p+1}
    \end{split}
  \end{equation}
  for all $t\in(0,T_{\max})$ and $\eta\in(0,1/2)$.
  Applying \eqref{eq: GNI 2+2/p grad2 2/p} with $\varphi = w^p$, we have 
  \begin{equation}
\begin{split}
      \int_\Omega w^{2p+1} 
      &\leq C_{\mathrm{gn}}\|\nabla w^p\|_{L^2(\Omega)}^2\|w\|_{L^2(\Omega)} 
      + C_{\mathrm{gn}}\|w\|_{L^2(\Omega)}^{p+1/2}\\
      &\leq C_{\mathrm{gn}}\Upsilon \|\nabla w^p\|_{L^2(\Omega)}^2 
      + C_{\mathrm{gn}}\Upsilon^{p+1/2}
\end{split}
  \end{equation}
  for all $t\in(0,T_{\max})$.
Using~\eqref{eq: GNI 2+1/p grad2 2/p} with $\varphi = u^{p/2}$ and $q=p/2$, we get  
\begin{equation}
  \label{eq: estimate up+1/2 by GNI}
  \begin{split} 
  \int_\Omega u^{p+1/2}
  &\leq C_{\mathrm{gn}}\ln^{-1/2}(1+k)\|\nabla u^{p/2}\|_{L^2(\Omega)}^2
  \|u\ln(u^{p/2}+1)\|_{L^1(\Omega)}^{1/2} \\
  &\quad + C_{\mathrm{gn}}\|u\|_{L^{1}(\Omega)}^{p+1/2}
  + C_{\mathrm{gn}}\pi^2k^{2+1/p}\\
  &\leq C_{\mathrm{gn}}\Upsilon^{1/2}\ln^{-1/2}(1+k)\|\nabla u^{p/2}\|_{L^2(\Omega)}^2
  + C_{\mathrm{gn}}\|u_0\|_{L^{1}(\Omega)}^{p+1/2}
  + C_{\mathrm{gn}}\pi^2k^{2+1/p}
  \end{split}
\end{equation}
for all $t\in(0,T_{\max})$ and $k>0$.
Fix 
\begin{equation*}
  \eta := \frac{1/p}{C_{\mathrm{gn}}\Upsilon + 2}
\end{equation*}
and 
\begin{equation*}
  k := \exp\bigg(\frac{p^2C_{\mathrm{gn}}^2\Upsilon}{\eta^{4p}}\bigg).
\end{equation*}
Collecting \eqref{eq: ODI of y_p}--\eqref{eq: estimate up+1/2 by GNI}, we have 
\begin{equation}
  \label{eq: refined ODI of y_p}
  \begin{split}
    y_p' + \delta y_p 
    &\leq - 2\eta^{-2p}\int_\Omega u^{p + 1/2} 
    + \delta\int_\Omega u^p\\
    &\quad + 2\eta C_{\mathrm{gn}}\Upsilon^{p+1/2}
    + 2\eta^{-2p}C_{\mathrm{gn}}m^{p+1/2}
  + 2\eta^{-2p}C_{\mathrm{gn}}\pi^2k^{2+1/p}\\
  &\leq \pi\delta^{2p+1} 
  + 2\eta C_{\mathrm{gn}}\Upsilon^{p+1/2}
    + 2\eta^{-2p}C_{\mathrm{gn}}m^{p+1/2}
  + 2\eta^{-2p}C_{\mathrm{gn}}\pi^2k^{2+1/p}
  \end{split}
\end{equation}
for all $t\in(0,T_{\max})$.
Hence, \eqref{eq: up w2p is uniform-in-time integrable} follows from Gronwall inequality applied to \eqref{eq: refined ODI of y_p}.
\end{proof}

Now we are in a position to show Proposition~\ref{prop: varepsilon regularity}.

\begin{proof}[Proof of Proposition~\ref{prop: varepsilon regularity}]
  In view of Lemma~\ref{le: entropy finity} and Lemma~\ref{le: Lp finity},
  we have 
  \begin{equation*}
    \sup_{t\in(0,T_{\max})}\int_\Omega w^4\dd x < \infty,
  \end{equation*}
  which implies 
  \begin{equation*}
    \sup_{t\in(0,T_{\max})}\int_\Omega |\nabla v|^5 \dd x 
    < \infty,
  \end{equation*}
  by standard elliptic regularity theories. 
  A direct application of well-established Neumann heat semigroup estimates~\cite[Lemma~1.3]{Winkler2010b} shows \eqref{eq: L infinity u}.
\end{proof}

\section{Chemotactic collapse} 
\label{sec: chemotactic collapse}

Now we are in a position to show Proposition~\ref{prop: chemotactic collapse}.

\begin{proof}[Proof of Proposition~\ref{prop: chemotactic collapse}]
  As 
  \begin{equation*}
    \begin{split}
      \bigg|\frac{\dd}{\dd t}\int_{B_r}u(\cdot,t)\dd x \bigg|
      &= 2\pi^2\Big|\int_0^r(u_r\rho^3)_r - (uv_r\rho^3)_r\dd\rho\Big|\\
      &\leq 2\pi^2|u_r(r,t)|r^3 + |u(r,t)v_r(r,t)|r^3
    \end{split}
  \end{equation*}
  for all $(r,t)\in(0,1)\times(T_{\max}/2,T_{\max})$, 
  we have 
  \begin{equation*}
    \sup_{t\in(T_{\max}/2, T_{\max})}\bigg|\frac{\dd}{\dd t}\int_{B_r}u(\cdot,t)\dd x \bigg| < \infty
    \quad \text{for all } r\in(0,1),
  \end{equation*}
  according to Lemma~\ref{le: pointwise estimate}.
  Then the function
  \begin{equation*}
    \Xi(r) 
    := \lim_{t\uparrow T_{\max}}\int_{B_r} u(\cdot,t)\dd x 
    \quad \text{for } r\in(0,1)
  \end{equation*}
  is well-defined.
  As $\Xi(r)$ is non-decreasing, we put 
  \begin{equation}
    \label{sym: m_star}
    m_\star := \lim_{r\downarrow0}\Xi(r).
  \end{equation}
  We claim that $m_\star \geq 64\pi^2$. 
  Indeed, if $m_\star < 64\pi^2$, then there exists $\varrho\in(0,1)$ such that 
  \begin{equation*}
    \Xi(\varrho) < 64\pi^2.
  \end{equation*}
  Moreover, there exists $T\in(0,T_{\max})$ such that 
  \begin{equation*}
    \sup_{t\in(T,T_{\max})} \int_{B_\varrho} u(\cdot,t)\dd x 
    < 64\pi^2.
  \end{equation*}
  Recalling Proposition~\ref{prop: varepsilon regularity}, 
  this warrants 
  \begin{equation*}
    \sup_{t\in(T,T_{\max})}\|u\|_{L^\infty(\Omega)} < \infty,
  \end{equation*}
  which is incompatible with \eqref{eq: extensibility principle}, as $T_{\max} < \infty$. 

  As it follows from Lemma~\ref{le: pointwise estimate} that 
  \begin{equation*}
    \|u\|_{C^1(\Omega\setminus B_\rho)\times(T_{\max}/2,T_{\max})} < \infty 
    \quad \text{for all } \rho\in(0,1),
  \end{equation*}
  we may define the nonnegative function 
  \begin{equation*}
    f(x) := \lim_{t\uparrow T_{\max}} u(x,t) 
    \quad \text{for all } x\in\overline{\Omega}\setminus\{0\}.
  \end{equation*}
  Then $u(\cdot, t)\to f(\cdot)$ in $C^0_{\loc}(\overline{\Omega}\setminus\{0\})$ as $t\uparrow T_{\max}$.
  Fatou lemma and mass conservation formula~\eqref{eq: mass conservation} entail that $f\in L^1(\Omega)$.

  For $\phi\in C^0(\overline{\Omega})$, we calculate 
  \begin{equation*}
    \begin{split}
      &\quad \Big|\int_\Omega u(x,t)\phi(x)\dd x 
      - m_\star \phi(0) 
      - \int_\Omega f(x)\phi(x)\dd x\Big| \\
      &= \Big|\int_{B_r}u(x,t)(\phi(x)-\phi(0))\dd x 
      + \Big(\int_{B_r}u(x,t)\dd x -m_\star \Big) \phi(0)\\
      &\quad + \int_{\Omega\setminus B_r}(u(x,t) - f(x))\phi(x)\dd x
      - \int_{B_r}f(x)\phi(x)\dd x\Big|\\
      &\leq \|u_0\|_{L^1(\Omega)}\|\phi(x)-\phi(0)\|_{C^0(B_r)} 
      + \Big|\int_{B_r}u(x,t)\dd x -m_\star \Big|
      \cdot \|\phi\|_{L^\infty(\Omega)} \\
      &\quad + \|u(x,t) - f(x)\|_{L^\infty(\Omega\setminus B_r)}\|\phi\|_{L^1(\Omega)}
      + \|f\|_{L^1(B_r)}\|\phi\|_{L^\infty(\Omega)}
    \end{split}
  \end{equation*}
  for all $r\in(0,1)$.
  Then we have 
  \begin{equation*}
    \begin{split}
      &\quad \limsup_{t\uparrow T_{\max}}\Big|\int_\Omega u(x,t)\phi(x)\dd x 
      - m_\star \phi(0) 
      - \int_\Omega f(x)\phi(x)\dd x\Big| \\
      &\leq \|u_0\|_{L^1(\Omega)}\|\phi(x)-\phi(0)\|_{C^0(B_r)} 
      + (\Xi(r) -m_\star) \|\phi\|_{L^\infty(\Omega)} 
      + \|f\|_{L^1(B_r)}\|\phi\|_{L^\infty(\Omega)}
    \end{split}
  \end{equation*}
  for all $r\in(0,1)$. 
  Taking the limit $r\downarrow 0$, we get 
  \begin{equation*}
    \begin{split}
      \limsup_{t\uparrow T_{\max}}\Big|\int_\Omega u(x,t)\phi(x)\dd x 
      - m_\star \phi(0) 
      - \int_\Omega f(x)\phi(x)\dd x\Big| 
      \leq 0,
    \end{split}
  \end{equation*}
  thanks to $\phi\in C^0(\overline{\Omega})$, $f\in L^1(\Omega)$ and the definition~\eqref{sym: m_star} of $m_\star$.
  This is exactly \eqref{eq: weak star convergence of u}.
\end{proof}

\section*{Acknowledgments}
The first author has been supported by ``the Fundamental Research Funds for the Central Universities'' (No.~B250201215).
The last author has been supported in part by National Natural Science Foundation of China (No. 12271092, No. 11671079).

\section*{Declarations}
\begin{itemize}
  \item Conflict of interest: The authors declare they have no financial interests.
  \item Data availability: Not applicable.
  \end{itemize}

\providecommand{\bysame}{\leavevmode\hbox to3em{\hrulefill}\thinspace}
\providecommand{\MR}{\relax\ifhmode\unskip\space\fi MR }
\providecommand{\MRhref}[2]{%
  \href{http://www.ams.org/mathscinet-getitem?mr=#1}{#2}
}
\providecommand{\href}[2]{#2}

\end{document}